\renewcommand{\geq}{\geqslant}
\renewcommand{\leq}{\leqslant}
\renewcommand{\preceq}{\preccurlyeq}
\newtheorem{thm}{Theorem}[section]
\newtheorem{mydef}[thm]{Definition}
\newtheorem{myprop}[thm]{Proposition}
\newtheorem{mylemma}[thm]{Lemma}
\newtheorem{assmp}[thm]{Assumption}
\newtheorem{rmk}[thm]{Remark}
\newtheorem{obs}[thm]{Observation}
\newtheorem{ex}[thm]{Example}
\DeclareMathOperator*{\esssup}{ess\,sup}
\DeclareMathOperator*{\argmin}{arg\!\min}
\newcommand{\kernel}{\mathsf{K}}
\newcommand{\calL}{\mathcal{L}}
\newcommand{\R}{\ensuremath{\mathbb{R}}}
\newcommand{\norm}[1]{\lVert #1 \rVert}
\newcommand{\bignorm}[1]{\left\lVert #1 \right\rVert}
\newcommand{\bigip}[2]{\ensuremath{\left\langle #1, #2 \right\rangle}}
\newcommand{\ip}[2]{\ensuremath{\langle #1, #2 \rangle}}
\newcommand{\E}{\mathbb{E}}
\newcommand{\abs}[1]{\ensuremath{| #1 |}}
\newcommand{\bigabs}[1]{\ensuremath{\left| #1 \right|}}
\newcommand{\ind}{\mathbf{1}}
\renewcommand{\Pr}{\mathbb{P}}
\newcommand{\T}{\mathsf{T}}
\newcommand{\calA}{\mathcal{A}}
\newcommand{\calE}{\mathcal{E}}
\newcommand{\calX}{\mathcal{X}}
\newcommand{\calF}{\mathcal{F}}
\newcommand{\calH}{\mathcal{H}}
\numberwithin{equation}{section}
\newcommand{\calK}{\mathcal{K}}
\newcommand{\ha}{\hat{\alpha}}
\newcommand{\Symm}{\mathsf{Sym}}
\newcommand{\opnorm}[1]{\norm{#1}_{\mathrm{op}}}
\newcommand{\bregd}[2]{d_\psi\left(#1 \| #2\right)}
\newcommand{\bregdp}[2]{d_{\psi_p}\left(#1 \| #2\right)}
\newcommand{\bregdm}[2]{d_{\psi_m}\left(#1 \| #2\right)}
\title{Nonparametric adaptive control and prediction: theory and randomized algorithms}
\author[1]{Nicholas M.~Boffi%
\thanks{Authors contributed equally.}}
\author[2]{Stephen Tu$^*$}
\author[2,3]{Jean-Jacques~Slotine}
\affil[1]{Courant Institute of Mathematical Sciences, New York University}
\affil[2]{Google Brain Robotics}
\affil[3]{Nonlinear Systems Laboratory, Massachusetts Institute of Technology}
\begin{document}
\maketitle

\begin{abstract}
A key assumption in the theory of nonlinear adaptive control 
is that the uncertainty of the system can be expressed in the linear span
of a set of known basis functions.
While this assumption leads to efficient algorithms, it limits applications to very specific classes of systems.
We introduce a novel \textit{nonparametric} adaptive algorithm that estimates an infinite-dimensional density over parameters online to learn an unknown dynamics in a reproducing kernel Hilbert space. Surprisingly, the resulting control input admits an analytical expression that enables its implementation despite its underlying infinite-dimensional structure.
While this adaptive input is rich and expressive -- subsuming, for example, traditional linear parameterizations -- its computational complexity grows linearly with time, making it comparatively more expensive than its parametric counterparts. 
Leveraging the theory of random Fourier features, we provide an efficient randomized implementation that recovers the complexity of classical parametric methods while provably retaining the expressivity of the nonparametric input.
In particular, our explicit bounds only depend \emph{polynomially} on the underlying parameters of the system, allowing our proposed algorithms to efficiently scale 
to high-dimensional systems.
As an illustration of the method, we demonstrate the ability of the randomized approximation algorithm to learn a predictive model of a 60-dimensional system consisting of ten point masses interacting through Newtonian gravitation.
By reinterpretation as a gradient flow on a specific loss, we conclude with a natural extension of our kernel-based adaptive algorithms to deep neural networks. We show empirically that the extra expressivity afforded by deep representations can lead to improved performance at the expense of closed-loop stability that is both rigorously guaranteed and consistently observed for kernel machines.

\end{abstract}

\section{Introduction}
\label{sec:intro}

One of the fundamental assumptions of nonlinear adaptive systems theory is
that the uncertainty of the system can be written as a linear expansion in a set of known basis functions that are nonlinear in the system state.
While such linear parameterizations enable the derivation of efficient algorithms with provable guarantees, results outside of this restrictive regime are scarce. 
Notable examples typically leverage notions of monotonicity~\citep{astolfi03immersion,tyukin07adaptation} or convexity~\citep{annaswamy98adaptive,fradkov99} to make the underlying learning problem tractable. 

Here we broaden the applicability of adaptive control by relaxing this classical assumption. In statistical learning, nonlinear function approximation is handled through the use of reproducing kernel Hilbert spaces (RKHSs)~\citep{cucker02learning}, which are infinite-dimensional function spaces that admit tractable algorithms reminiscent of finite-dimensional linear regression. Inspired by this approach, we develop an adaptive input that learns directly over an RKHS without reference to a finite-dimensional vector of parameters.

One significant drawback of RKHSs is their computational cost. While the representer theorem ensures that estimation in an RKHS can always be cast as a finite expansion over the dataset, the number of parameters grows with its size, which makes learning on large datasets computationally demanding. A key breakthrough in overcoming this difficulty was the theory of random Fourier features, which shows that 
elements in many RKHSs can be approximated in the linear span of a finite set of \textit{random} basis functions with high probability. Remarkably, the number of
random basis functions needed can be shown to scale polynomially~\citep{rahimi08uniform} in the
function norm and the ambient dimension, which enables efficient computation even in
high-dimensional spaces. 

In the dynamical systems setting considered in this work, the system trajectory plays the role of the dataset, and the horizon plays the role of its size. Paralleling the statistical learning setting, the complexity of the nonparametric adaptive input that we introduce grows with this horizon. To overcome this complication, we leverage the theory of random features to provide high-probability guarantees on the possibility of uniformly approximating the nonparametric input via a finite-dimensional expansion in random basis functions. Importantly, this approach leads to efficient update laws that match the computational complexity of parametric methods while retaining the expressivity of the RKHS.

We focus on two primary problem settings. The first setting is the classical problem of adaptive control with matched uncertainty, where the uncertainty is assumed to live in the span of the control matrix. Our second application is in adaptive state estimation, where we seek to learn a model of an unknown dynamics governing the evolution of a particular state variable. As a byproduct of our analysis, we exhibit a duality between these two problems reminiscent of the duality between LQR and Kalman filtering in linear control theory. In both settings, we assume that the unmodelled component can be written as the sum of a term that can be linearly parameterized with known physically-motivated basis functions and a term assumed to live in an RKHS. This setup captures the practically relevant setting where a learner can leverage some available physical knowledge of the system but also must perform estimation in a purely unstructured fashion to achieve ideal performance. 

The paper is organized as follows. In Section~\ref{sec:related}, we review related work and summarize our contribution. In Section~\ref{sec:problem}, we formulate the adaptive control and prediction problems. In Section~\ref{sec:np_rslts}, we develop a theory of nonparametric adaptive control, building upon a simple observation reminiscent of the ``kernel trick'' in machine learning. In Section~\ref{sec:rf}, we review the theory of random Fourier features, which we subsequently apply in Section~\ref{sec:p_results} to design practical adaptive algorithms that asymptotically drive the control or prediction error to a ball around zero. The radius of the ball scales with the approximation error of the random feature expansion, and we give an explicit bound on the number of features needed to ensure that the tracking or prediction error falls below a tolerance threshold $\varepsilon$ with high probability.
In Section~\ref{sec:simulations}, we first study the performance of the nonparametric method in comparison to its randomized approximations on a synthetic adaptive control problem. We subsequently illustrate the effectiveness of its randomized approximations in very high dimension
by constructing an adaptive predictor for a $60$-dimensional
Hamiltonian dynamical system describing the motion of a collection of particles interacting through a $1/r^2$ potential.
\section{Related Work and Summary of Contributions}
\label{sec:related}

\paragraph{Uniform approximation for adaptive control} Most related to the present contribution is a line of work initiated by~\cite{kurdila13rkhs} and followed by~\cite{bobade18rkhs}, who study adaptive control and estimation in RKHSs. In these works, a nonparametric input is treated as an ideal, non-implementable abstraction, and this abstract input is approximated via orthogonal projections or a fixed grid of radial basis functions. Asymptotic convergence results are shown for the approximations,  but no finite-sample theory is given, and the grid of centers is chosen in an \textit{ad-hoc} fashion. By gridding the space, these past approaches essentially reduce to a classic line of work by~\cite{sanner_nn}, who approximate an unknown dynamics uniformly with a sum of radial basis functions. These basis functions are spaced on a regular grid, and the grid resolution is chosen based on considerations from sampling theory to ensure a sufficient degree of uniform approximation for the control application. Importantly, while these gridding-based approaches are suitable and highly efficient for low-dimensional systems, they become intractable for higher-dimensional systems. From the perspective of constructing a regular grid, ``low-dimensional'' is often as restrictive as four-dimensional, which is easily surpassed by modern control applications.

Another closely related line of work is \citet{chowdhary12adaptive,chowdhary2015gp}, who propose to use Gaussian Process (GP) regression for model reference adaptive control.
The primary difference with our work is that we derive a control
law (cf. \Cref{sec:np_rslts}) that operates purely in continuous-time, which obviates the need to take a time derivative of the error signal as supervision. This is important in practice, since it is well-known that computing the time derivative of a signal (e.g. with finite difference approximations) can amplify measurement noise.
An additional difference is that our theory quantifies the relation between
the number of random features in the function approximation (which governs its quality) and the size of the ball around the desired trajectory to which the system will converge.
Finally, our work relies on random feature approximations~\citep{rahimi07randomfeatures} for tractability,
which is simpler to implement in practice than approaches based on sparse GPs.

\paragraph{Randomization and dimensionality-dependence} We show that a nonparametric controller can be implemented as the action of a certain kernel integral operator against a known signal over the system trajectory, and we provide an intuitive derivation via the celebrated ``kernel trick''. This result naturally leads to the randomized approximation methods developed here, which can be seen as a stochastic alternative to a fixed grid of basis functions. The main advantage of randomization is computational: due to concentration of measure, the number of basis functions needed for our construction grows polynomially in the state and input dimension of the underlying control problem. This permits our method to scale to much higher-dimensional systems than prior methods based on gridding, which require a number of basis functions that grows \textit{exponentially} in dimension. Moreover, our work provides a natural path towards developing a theory of adaptive control with more expressive function classes such as single-layer neural networks~\citep{bach_breaking, bengio_convex}, as well as alternative approximation schemes such as the Nystr{\"o}m method~\citep{online_kernel}.

\paragraph{Random feature approximations} Our randomized algorithm is based on random Fourier features~\citep{rahimi07randomfeatures,rahimi08uniform,rahimi08kitchensinks} and their extension to vector-valued functions~\citep{brault16randomfeatures,minh16operator}. We build heavily on the results of~\cite{rahimi08uniform}, who prove that the $L_\infty$ approximation error
over a compact set for a function $f$ in an RKHS $\calH$ 
decays as $O(1/\sqrt{K})$, where $K$ is the number of features drawn 
from a particular distribution induced by $\calH$. This rate matches that due to~\cite{barron93universal} for approximation of functions whose gradients have absolutely integrable Fourier transforms via sums of sigmoidal basis functions.

\paragraph{Control and robotic learning} In control and robotics applications, several authors have utilized 
random features for function approximation in learning
stable vector fields~\citep{sindhwani18vectorfields},
control contraction metrics~\citep{singh20learning},
Lyapunov functions~\citep{boffi20learningcertificates},
and in velocity gradient-based adaptation~\citep{boffi21regret}.
However, these works do not analyze the effect of the
approximation error introduced by random features 
on the control performance, nor do they provide any bounds
on the number of random features needed to achieve a 
specified level of uniform approximation. Adaptive control laws have also been developed for robotic manipulators by exploiting the structure of the governing Euler-Lagrange equations~\citep{slot_li_robot}; it is straightforward to extend our results to this setting, or to augment existing robotic adaptive control laws with a nonparametric component to improve robustness to unmodeled disturbances.

\paragraph{Generality of results} While the focus of this work is on nonparametric adaptive control and randomized approximation schemes, we have written our results generally to capture a variety of different
settings in adaptive control, including Lyapunov-based adaptive control~\citep{krstic95adaptivebook}, speed/velocity gradient methods~\citep{fradkov99,krstic95adaptivebook}, 
mirror descent~\citep{boffi_neco_imp_reg}, and contraction metrics~\citep{brett_adapt}. We believe that this unification of results represents one of the most general treatments of nonlinear adaptive control available in the literature, and see it to be of independent interest.
\section{Problem Formulation}
\label{sec:problem}
\paragraph{Adaptive control} We study nonlinear dynamical systems in \textit{matched uncertainty form}
\begin{equation}
    \label{eqn:gen_dyn}
    \dot{x} = f(x, t) + g(x, t)\left(u(x, t) - Y(x, t) \alpha_p - h(x)\right),
\end{equation}
where $f:\R^n\times\R_{\geq 0} \rightarrow \R^n$ is the ``nominal dynamics'' representing the behavior of the system in the absence of any inputs, $g : \R^n\times\R_{\geq 0}\rightarrow \R^{n\times d}$ is the control matrix describing how an input enters the system, $u:\R^n\times\R_{\geq 0}\rightarrow \R^d$ is the control input chosen by the learner, 
$Y : \R^n \times \R_{\geq 0} \rightarrow \R^{d \times p}$
is a matrix of basis functions describing the system's physical structure, $\alpha_p \in \R^p$ is a corresponding vector of physical parameters,
and $h \in \calH$ is an unknown dynamics in an operator-valued RKHS $\calH$ of functions mapping $\R^n\mapsto\R^d$~\citep{carmeli08operator}\footnote{A formal definition of an operator-valued RKHS will be provided in Section~\ref{sec:np_rslts}.}. Both $h$ and $\alpha_p$ are unknown, and the goal is to drive $x(t)$ to a bounded desired trajectory $x_d(t)$ by learning a suitable input $u(x, t)$ online. As a supervisory signal, the learner observes an error $e(t) \in \R^s$ at each $t$ with dynamics
\begin{align}
    \dot{e} = f_e(e, t) + g_e(x, t) (u(x, t) - Y(x, t) \alpha_p - h(x)), \label{eq:error_dynamics}
\end{align}
where $f_e : \R^s \times \R_{\geq 0} \rightarrow \R^s$ and $g_e : \R^n \times \R_{\geq 0} \rightarrow \R^{s \times d}$.
While the most natural
error signal is the trajectory tracking
error $e(t) = x(t) - x_d(t)$,
we formulate the error signal 
more abstractly to allow for controllers that
only actuate higher order derivatives of the state.
This is discussed more in \Cref{ex:controllable_lti}.

\begin{rmk}
Our formulation with $h$ autonomous can be relaxed by considering an RKHS of functions mapping $\R^{n+1} \mapsto \R^{d}$, i.e., by treating time explicitly as an input variable.
\end{rmk}

\paragraph{Adaptive prediction} We study nonlinear dynamical systems that can be additively decomposed 
\begin{equation*}
    \dot{x} = f(x, t) = Y(x, t)\alpha_p + h(x),
\end{equation*}
where $f:\R^n\times\R_{\geq 0}\rightarrow\R^n$ is an unknown dynamics 
composed of terms that have a similar interpretation to the control setting. The goal is to learn an approximation $\hat{f}: \R^n\times \R_{\geq 0}\rightarrow \R^n$ of the true dynamics $f$ by designing an estimator
\begin{equation}
    \label{eqn:dyn_predict}
    \dot{\hat{x}} = \hat{f}(\hat{x}, t) + k(\hat{x}, x(t))
\end{equation}
that will ensure $\hat{x}(t)$ asymptotically approaches $x(t)$. In \eqref{eqn:dyn_predict}, $k:\R^n\times\R^n\rightarrow\R^n$ is a feedback term satisfying $k(x, x) = 0$ for all $x$ that is used to ensure $\hat{x}(t)$ remains close to $x(t)$ during learning. In this setting, the error signal can be taken as the prediction error $e(t) = \hat{x}(t) - x(t)$. Moreover, the estimator state $\hat{x}(t)$ plays the role of $x(t)$ from the control setting, while $x(t)$ plays the role of the desired trajectory $x_d(t)$. When measurements are no longer available, the open-loop system $\dot{\hat{x}} = \hat{f}(\hat{x}, t)$ may be used to extrapolate the state to make predictions into the future. If measurements are only available at some discrete sampling frequency, the dynamics $\hat{f}$ can be used to interpolate the value of the state between sampling points. This discrete setting is expanded upon in \Cref{app:sample}.

\subsection{Notation} 
We consider algorithms that update estimates of the physical parameters
$\hat{\alpha}_p \in O_p \subseteq \R^p$ and model parameters (when applicable) $\hat{\alpha}_m \in O_m \subseteq \R^m$ online, where $O_p$ and $O_m$
are open convex subsets.
We fix twice differentiable mirror maps\footnote{See e.g. \cite[Section 4.1]{bubeck_fnt_book} for a definition.} (potential functions) $\psi_p : O_p \rightarrow \R$ (resp. $\psi_m : O_m \rightarrow \R$)
that are strongly convex with respect to a norm $\norm{\cdot}$ on $O_p$ 
(resp. $\norm{\cdot}'$ on $O_m$)
and have locally Lipschitz Hessians.
For a potential $\psi$, we let $\bregd{\alpha}{\ha} = \psi(\alpha) - \psi(\ha) - \nabla \psi(\ha)^\T(\ha - \alpha)$ denote the Bregman divergence associated with $\psi$. We use $\norm{\cdot}_2$ to denote the $\ell_2$ norm,
$\opnorm{\cdot}$ to denote the $\ell_2\rightarrow \ell_2$ operator norm of a matrix,
$B_2^n(R)$ to denote the closed $\ell_2$ ball of radius $R$ in $\R^n$, 
$\mathbb{S}^{n-1}$ to denote the unit sphere in $\R^n$, $\R_{\geq 0}$ to denote the non-negative reals, and 
$\Symm_{\geq 0}^{n \times n}$ to denote the set of symmetric positive semidefinite $n \times n$ matrices.
More generally, for a normed vector space $E$, $\norm{\cdot}_E$ denotes
its norm, and $B_E(R)$ denotes a closed ball in $E$ of radius $R$.
For a measure $\nu$, measurable space $\Theta$, and positive integer $q$,
the space $L_2^q(\Theta, \nu)$ denotes the real Hilbert space of
square integrable measurable functions $f : \Theta \rightarrow \R^q$ with norm
$\norm{f}_{L_2^q(\Theta, \nu)}^2 = \int_\Theta \norm{f(\theta)}_2^2 d\nu(\theta)$.
We will often drop the dependence on $q$ when it is clear from the context. Finally, for a positive definite metric $M : \R^n \rightarrow \Symm_{\geq 0}^{n \times n}$,
the Riemannian energy $E_M : \R^n \times \R^n \rightarrow \R_{\geq 0}$ 
is defined as:
\begin{align*}
    E_M(x, y) := \inf_{\gamma} \int_0^1 \gamma_s(s)^\T M(\gamma(s)) \gamma_s(s) ds, \:\: \gamma_s(s) = \frac{d \gamma}{ds}(s),
\end{align*}
where the infimum ranges over smooth curves $\gamma$ satisfying
$\gamma(0) = x$ and $\gamma(1) = y$.

\subsection{Assumptions} 
To make the above learning problems tractable and to simplify our presentation of results, we require some standard definitions and assumptions. The first requirement is regularity of the nominal dynamics, control matrix, and basis functions.

\begin{mydef}
\label{def:local_lip_local_bound}
Let $E_1$ and $E_2$ be normed vector spaces. 
A function $f(x, t)$ mapping $ E_1 \times \R_{\geq 0} \mapsto E_2$ is said to be locally Lipschitz in $x$ if for every finite $T > 0$ and $R > 0$, 
\begin{equation*}
    \sup_{t \in [0, T]} \sup_{\substack{\norm{x}_{E_1} \leq R,\\ \norm{y}_{E_1} \leq R, \\ x \neq y}} \frac{ \norm{f(x, t) - f(y, t)}_{E_2}}{\norm{x-y}_{E_1}} < \infty.
\end{equation*}
Furthermore, $f$ is said to be locally bounded in $x$ uniformly in $t$
if for every finite $R > 0$, 
\begin{equation*}
    \sup_{t \in \R_{\geq 0}} \sup_{\norm{x}_{E_1} \leq R} \norm{f(x, t)}_{E_2} < \infty.
\end{equation*}
\end{mydef}
\begin{assmp}[Dynamics regularity]
\label{assmp:basic}
The functions $f$, $g$, and $Y$ are known to the learner. Moreover, $f$, $g$, $Y$, and $h$ are locally Lipschitz in $x$ and locally bounded in $x$ uniformly in $t$.
\end{assmp}
Our second requirement is a set of reasonable conditions on the error to ensure it provides a suitable signal for learning.
\begin{assmp}[Error regularity]
\label{assmp:error}
$f_e$ and $g_e$ are locally Lipschitz in their first argument and locally bounded in their first argument uniformly in $t$. Moreover, the following three conditions hold:
\begin{enumerate}[(i)]
    \item In the absence of the unknown dynamics and any input, zero error is a fixed point,
\begin{align}
    \label{eqn:error_fixed_pint}
    f_e(0, t) &= 0 \:\: \text{for all}\:\: t \geq 0.
\end{align}
    \item Bounded error implies a bounded deviation from the desired trajectory,
\begin{align}
    \label{eq:bounded_error_bounded_x}
    \sup_{t \in [0, T]} \norm{e(t)}_2 < \infty &\text{ implies } \sup_{t \in [0, T]} \norm{x(t)-x_d(t)}_2 < \infty \:\: \text{for all}\:\: T > 0.
\end{align}
    \item A convergent error signal implies a convergent trajectory
\begin{align}
    \label{eq:error_signal_asymp}
    \lim_{t \rightarrow \infty} \norm{e(t)}_2 = 0 &\text{ implies } \lim_{t \rightarrow \infty} \norm{x(t)-x_d(t)}_2 = 0. 
\end{align}
\end{enumerate}
\end{assmp}
To demonstrate that such error signals can be constructed in practice, we provide a few simple illustrative examples.

\begin{ex}[Systems with regularity] 
Consider a system satisfying Assumption~\ref{assmp:basic}. Then $e(t) = x(t) - x_d(t)$ satisfies the requirements in Assumption~\ref{assmp:error}.
\end{ex}

\begin{ex}[Controllable linear time-invariant systems]
\label{ex:controllable_lti}
Consider the linear time-invariant system $f(x, t) = Ax$ and $g(x, t) = B$ with the pair $(A, B)$ controllable. Let $z(t) \in \R^n$ denote the state of the system expressed in control canonical form, and let $z_d(t)\in\R^n$ denote the corresponding desired trajectory. Define $e(t) = H(s)\left(z_1(t) - z_{d, 1}(t)\right)$ where $H(s)$ is a stable transfer function with at most $n-1$ poles and $z_i(t)$ denotes the $i^{\text{th}}$ component of $z$. Then $e(t)$ satisfies the requirements of Assumption~\ref{assmp:error}.
\end{ex}

The following stability assumption on the error model is key to our analysis. This assumption is equivalent to requiring that in the absence of the unknown dynamics and adaptive input, the system will nominally tend to the desired trajectory.
\begin{assmp}[Lyapunov stability of the error]
\label{assmp:lyapunov}
The error system \eqref{eq:error_dynamics} admits a continuously differentiable Lyapunov function $Q: \R^s\times \R_{\geq 0} \rightarrow \R$ satisfying for every $e \in \R^s$ and $t \geq 0$,
\begin{enumerate}[(i)]
\item $\nabla Q(e, t)$ and $\frac{\partial Q}{\partial t}(e, t)$ are 
locally bounded in $e$ uniformly in $t$,
\item $\nabla Q(e, t)$ is locally Lipschitz in $e$,
\item $\ip{\nabla Q(e, t)}{f_e(e, t)} + \frac{\partial Q}{\partial t}(e, t) \leq -\rho(\norm{e}_2)$, and
\item $\mu_1(\norm{e}_2) \leq Q(e, t) \leq \mu_2(\norm{e}_2)$,
\end{enumerate}
where $\rho, \mu_1,$ and $\mu_2$ denote class-$\calK_\infty$ functions.
\end{assmp}%

While we focus on Lyapunov stability of the error dynamics, our results encompass incremental forms of stability such as contraction~\citep{lohmiller98contraction}. 

\begin{rmk}[Contraction]
\label{rmk:lyap_contr}
We say that the error system is contracting in a metric $M:\R^s\times\R_{\geq 0} \rightarrow \Symm^{s\times s}_{\geq 0}$ if for some $\lambda > 0$,
\begin{align}
\frac{\partial f_e}{\partial e}(e, t)^\T M(e, t) + M(e, t) \frac{\partial f_e}{\partial e}(e, t) + \dot{M}(e, t) \preccurlyeq -2\lambda M(e, t), \:\: \forall e \in \R^s, t \in \R_{\geq 0}.
\end{align}
Taking the first variation of the Riemannian energy between the error $e$ and the zero trajectory $Q(e, t) = E_{M(\cdot, t)}(e, 0)$ shows that $\ip{\nabla Q(e, t)}{f_e(e, t)} + \frac{\partial Q}{\partial t}(e, t) \leq -2\lambda Q(e, t)$, so that the energy serves as an exponentially stable Lyapunov function. This correspondence will be used in the prediction setting with $e(t) = \hat{x}(t) - x(t)$.
\end{rmk}

\section{Nonparametric adaptive control and prediction}
\label{sec:np_rslts}
In this section, we present our primary result in the nonparametric setting.  Given a Lyapunov function for the error dynamics as stated in Assumption~\ref{assmp:lyapunov}, the standard procedure in adaptive nonlinear control is to approximate the unknown dynamics $h(x)$ appearing in \eqref{eqn:gen_dyn}~\&~\eqref{eq:error_dynamics} by an expansion in known basis functions $\Phi:\R^n\rightarrow\R^{d\times p}$~\citep{sanner_nn}
\begin{equation}
    \label{eqn:parametric_approx}
    \hat{h}(x,t) = \Phi(x)\hat{\alpha}(t),
\end{equation}
and to update the parameter estimates $\hat{\alpha}(t)\in\R^p$ according to a Lyapunov-based update law
\begin{equation}
    \label{eqn:linear_update_standard}
    \dot{\hat{\alpha}}(t) = -\gamma \Phi(x)^\T g_e(x, t)^\T \nabla Q (e, t),
\end{equation}
for $\gamma > 0$ a learning rate. 

\subsection{Nonparametric form} 
We start with the following simple observation about the construction in \eqref{eqn:parametric_approx}~\&~\eqref{eqn:linear_update_standard}, which is analogous to the ``kernel trick'' in machine learning.
\begin{obs}[Kernel trick]
\label{obs:kernel}
Assume $\hat{\alpha}(0) = 0$\footnote{Note that this is without loss of generality, since any non-zero $\hat{\alpha}(0)$ results in a non-zero
$\hat{h}(\cdot, 0)$ which can simply be absorbed into $h$.}. Then the adaptive approximation \eqref{eqn:parametric_approx} with parameters updated according to the algorithm \eqref{eqn:linear_update_standard} is equivalent to the nonparametric approximation
\begin{equation}
    \label{eqn:kernel_input}
    \hat{h}(x, t) = \int_0^t \kernel(x, x(\tau)) c(\tau)d\tau,
\end{equation}
where we have defined the kernel function $\kernel:\R^n\times\R^n\rightarrow\R^{d\times d}$ and coefficients $c(t)\in\R^d$ as:
\begin{align*}
    \kernel(x, y) &= \Phi(x)\Phi(y)^\T,\\
    c(t) &= -\gamma g_e(x(t), t)^\T \nabla Q(e(t), t).
\end{align*}
\end{obs}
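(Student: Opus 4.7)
The claim is essentially an exercise in integrating the update law against time, so the plan is to derive the closed form solution for $\hat\alpha(t)$ and then substitute into $\Phi(x)\hat\alpha(t)$.

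First I would rewrite the update law \eqref{eqn:linear_update_standard} evaluated along the trajectory $x(\tau)$ as
\begin{equation*}
\dot{\hat\alpha}(\tau) = -\gamma\,\Phi(x(\tau))^\T g_e(x(\tau),\tau)^\T \nabla Q(e(\tau),\tau) = \Phi(x(\tau))^\T c(\tau),
\end{equation*}
where the second equality uses the definition $c(\tau) = -\gamma\,g_e(x(\tau),\tau)^\T \nabla Q(e(\tau),\tau)$ given in the statement. Integrating both sides from $0$ to $t$ and invoking the initial condition $\hat\alpha(0) = 0$ yields
\begin{equation*}
\hat\alpha(t) = \int_0^t \Phi(x(\tau))^\T c(\tau)\,d\tau.
\end{equation*}
(For this step to make sense one needs $\tau \mapsto \Phi(x(\tau))^\T c(\tau)$ to be locally integrable, which follows from continuity of $x(\cdot)$ and $e(\cdot)$ together with Assumptions~\ref{assmp:basic}--\ref{assmp:lyapunov}; I would note this briefly but not belabor it since local Lipschitzness of $\Phi$, $g_e$, and $\nabla Q$ has already been assumed.)

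Next I would substitute this expression into the parametric form \eqref{eqn:parametric_approx} and pull the constant-in-$\tau$ factor $\Phi(x)$ inside the integral:
\begin{equation*}
\hat h(x,t) = \Phi(x)\hat\alpha(t) = \int_0^t \Phi(x)\Phi(x(\tau))^\T c(\tau)\,d\tau = \int_0^t \kernel(x,x(\tau))\,c(\tau)\,d\tau,
\end{equation*}
using the definition $\kernel(x,y) = \Phi(x)\Phi(y)^\T$. This is exactly \eqref{eqn:kernel_input}.

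There is essentially no obstacle here: the argument is a one-line integration once the variable of integration and the role of $\hat\alpha(0)=0$ are made explicit. The only subtlety worth flagging is that the equivalence presumes existence and uniqueness of the solution $(\hat\alpha(t),x(t),e(t))$ on the interval $[0,t]$, which is inherited from the standing regularity assumptions on $f$, $g$, $Y$, $h$, $f_e$, $g_e$, and the Lyapunov function $Q$; this justifies interchanging the order of integration and the matrix multiplication by $\Phi(x)$ and treating the Bochner integral componentwise. The observation is thus a tautological reformulation that nevertheless has the important consequence of exposing the kernel $\kernel(x,y) = \Phi(x)\Phi(y)^\T$, on which the rest of the nonparametric development in Section~\ref{sec:np_rslts} is built.
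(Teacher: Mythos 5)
Your proposal is correct and follows exactly the route the paper sketches: formally integrating the update law \eqref{eqn:linear_update_standard} in time with $\hat{\alpha}(0)=0$ and substituting the resulting $\hat{\alpha}(t)$ into \eqref{eqn:parametric_approx}, with $\Phi(x)$ pulled inside the integral to expose $\kernel(x,y)=\Phi(x)\Phi(y)^\T$. Nothing further is needed.
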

The proof is simple and proceeds by formally writing the solution of \eqref{eqn:linear_update_standard} as an integral over time. Observation~\ref{obs:kernel} demonstrates that the function estimates formed by classical adaptive control algorithms only depend on inner products between the basis functions and do not, in principle, require any reference to a vector of parameter estimates. This implies that the basis functions need not be finite-dimensional so long as they admit a computationally inexpensive procedure for computing their inner products, which is precisely the case for an RKHS. 

\paragraph{Data-adapted centers}  Restricting to the case where $\kernel(\cdot, \cdot)$ is the Gaussian kernel, \eqref{eqn:kernel_input} can be seen as leaving a ``trail'' of Gaussians along the system trajectory $x(\tau)$ for $\tau < t$. In this sense, similar to kernel machines in statistical learning, \eqref{eqn:kernel_input} automatically constructs data-adapted centers at which to place spatially-localized basis functions. 

\paragraph{Complexity} The price paid for the expressivity in the representation \eqref{eqn:kernel_input} is that $\hat{h}(x, t)$ now obeys a partial differential equation that must be solved over a horizon of length $t$ at each $x \in \R^n$,
\begin{equation}
    \label{eqn:pde}
    \frac{\partial \hat{h}}{\partial t}(x, t) = \kernel(x, x(t))c(t).
\end{equation}
While \eqref{eqn:pde} is decoupled in space so that a global solve is not required, past work from time $\tau < t$ cannot be re-used at time $t$. Hence, unlike standard parametric methods that incur an $\mathcal{O}(1)$ cost at each timestep, solving \eqref{eqn:pde} for the value of $\hat{h}(x, t)$ at a given spatial location $x$ incurs an $\mathcal{O}(t)$ cost at each time $t$. For most applications, this is prohibitively expensive, and we now turn to efficient approximation schemes that circumvent this difficulty.

\subsection{Random feature space}
Observation~\ref{obs:kernel} motivates us to work with function classes
described by kernels. The following definition
introduces the notion of an operator-valued kernel.
\begin{mydef}[Operator-valued reproducing kernel, see e.g.,~\cite{carmeli08operator}]
\label{def:op_kernel}
A kernel $\kernel : \R^{n} \times \R^{n} \rightarrow \R^{d \times d}$ is said to be an operator-valued reproducing
kernel for an RKHS $\calH$ if 
\begin{enumerate}[(i)]
    \item For every $\{x_i\}_{i=1}^{N} \subseteq \R^n$ and $\{w_i\}_{i=1}^{N} \subseteq \R^d$, it holds 
that $\sum_{i,j=1}^{N} \ip{w_i}{\kernel(x_i, x_j) w_j} \geq 0$.
    \item $\kernel(\cdot, x)w \in \calH$ for every $x\in \R^n$ and $w\in\R^d$.
    \item $\calH$ can be written
    $\calH = \mathsf{cl}\left\{f \,\Bigg|\, \exists\: \{x_i\}_{i=1}^{n}, \{w_i\}_{i=1}^{n}\: s.t.\: f(\cdot) = \sum_{i=1}^n\kernel(\cdot, x_i)\omega_i \right\}$.
\end{enumerate}
\end{mydef} 
The adaptive algorithms we formulate will be valid for any RKHS $\calH$ with a known operator-valued kernel $\kernel$. However, we focus on RKHSs with specific structure that will enable the design of efficient randomized approximations. These function spaces are described by the following assumption.
\begin{assmp}[The function class $\mathcal{F}_2$, see e.g.,~\cite{bach_breaking}]
\label{assmp:rf_kernel}
The unknown dynamics $h$ lies in an RKHS $\calH$ with known operator-valued kernel $\kernel$. Moreover, $\kernel$ may be written in terms of a feature map $\Phi:\R^n\times\Theta\rightarrow\R^{d\times d_1}$ as
\begin{align}
    \kernel(x, y) = \int_{\Theta} \Phi(x, \theta) \Phi(y, \theta)^\T d\nu(\theta), \label{eq:kernel_form}
\end{align}
with $d_1\leq d$ and where $\nu$ is a known probability measure on a measurable space $\Theta$.
\end{assmp}
In Assumption~\ref{assmp:rf_kernel}, we have overloaded the definition of the feature map $\Phi$ as a generalization of the structure of $\kernel$ seen in Observation~\ref{obs:kernel}. Assumption~\ref{assmp:rf_kernel} is not very restrictive, as many rich kernels applied in practice -- such as the Gaussian and Laplace kernels -- can readily written in this form. In particular, the operator-valued generalization of Bochner's theorem~\citep{brault16randomfeatures} states that any translation-invariant kernel can be written in the form \eqref{eq:kernel_form} with a feature map
\begin{equation}
    \label{eqn:bochner}
    \Phi(x, \theta) = B(w) \cos(w^\T x + b),
\end{equation}
where $\Theta \subseteq\R^{n+1}$, $\theta = (w, b)$, $w\in\R^n$, $b\in\R$, and for suitable choices of $\nu$ and $B : \R^n\rightarrow \R^{d\times d_1}$.

Under Assumption~\ref{assmp:rf_kernel}, it is well-known (c.f.~\cite{bach_breaking}, Appendix A) that $h \in \calH$ can be written, for some square-integrable signed density $\alpha:\Theta\rightarrow\R^{d_1}$ with respect to the base measure $\nu$, as the integral
\begin{equation}
    \label{eqn:f2_func}
    h(\cdot) = \int_{\Theta}\Phi(\cdot, \theta)\alpha(\theta)d\nu(\theta), \:\: \norm{h}^2_{\calH} = \norm{\alpha}^2_{L_2(\Theta, \nu)}.
\end{equation}
The corresponding Hilbert space is referred to as $\mathcal{F}_2$~\citep{bach_breaking, bengio_convex}. $\calF_2$ is related to the Banach space of single-layer neural networks $\mathcal{F}_1$, which may be obtained by taking the union over all possible base measures for $\calF_2$. The space $\calF_2$ is convenient for our purposes because it allows us to treat the infinite-dimensional density over parameters $\alpha$ similar to a standard finite-dimensional vector of parameters. To do so, we introduce a second moment regularity condition that will ensure the nonparametric input leads to a stable and convergent feedback system.
\begin{assmp}[Second moment regularity of $\Phi$]
\label{assmp:second_moment_bound}
For every $x \in \R^n$, the second moment of the feature matrix is finite, i.e., $\int_\Theta \opnorm{\Phi(x, \theta)}^2 d\nu(\theta) < \infty$.
Furthermore, for every $R > 0$,
\begin{align*}
    \sup_{\substack{\norm{x}_2 \leq R, \norm{y}_2 \leq R,\\ x \neq y}} \frac{\left(\int_\Theta \opnorm{\Phi(x, \theta) - \Phi(y, \theta)}^2 d\nu(\theta)\right)^{1/2}}{\norm{x-y}_2} < \infty.
\end{align*}
\end{assmp}
To obtain accurate parametric approximations, we may sample points $\theta_i \in \Theta$ i.i.d. from the base measure $\nu$. This has the effect of discretizing the density $\alpha$ into a vector of parameters that can be learned using standard adaptive methods.

\subsection{Main results}
For simplicity of exposition, we restrict to the case where $\alpha_p = 0$ in \eqref{eqn:gen_dyn} to focus on convergence of the nonparametric input; the randomized approximations in Section~\ref{sec:p_results} will adapt over both physical and mathematical parameter estimates simultaneously. Moreover, we focus here on the setting of adaptive control. Later, the proof of Theorem~\ref{thm:dp_finite_approx} will highlight a duality between adaptive control and adaptive prediction that immediately implies an analogous result for prediction. 
The following theorem demonstrates that the nonparametric adaptation algorithm leads to a stable and convergent trajectory.
\begin{restatable}[Convergence]{mythm}{npconv}
\label{thm:nonparametric_conv}
Consider system \eqref{eqn:gen_dyn} under Assumptions~\ref{assmp:lyapunov},~\ref{assmp:rf_kernel}, and~\ref{assmp:second_moment_bound}. Fix $\alpha_p = 0$ and let $\gamma > 0$.
Then the adaptive control input
\begin{align*}
    u(x, t) &= - \gamma\int_0^t \kernel(x, x(\tau))g_e(x(\tau), \tau)^\T \nabla Q(e(\tau), \tau)d\tau
\end{align*}
ensures that both $x(t)$ and $e(t)$ exist and are uniformly bounded for all $t \geq 0$. Moreover, $u(\cdot, t) \in \calH$ for all $t \geq 0$ and $\lim_{t\rightarrow \infty} \norm{x(t) - x_d(t)}_2 = 0$.
\end{restatable}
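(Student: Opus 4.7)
The plan is to lift the finite-dimensional Lyapunov argument used in classical adaptive control into the $\mathcal{F}_2$ representation of $\calH$. First, by Fubini's theorem (justified by Assumption~\ref{assmp:second_moment_bound} together with boundedness of $g_e$ and $\nabla Q$ on compact sets), I would rewrite the control input as
\begin{equation*}
    u(x, t) = \int_\Theta \Phi(x,\theta)\, \ha(\theta, t)\, d\nu(\theta), \qquad \ha(\theta, t) := -\gamma \int_0^t \Phi(x(\tau),\theta)^{\!\T} g_e(x(\tau),\tau)^{\!\T} \nabla Q(e(\tau),\tau)\, d\tau,
\end{equation*}
so that the nonparametric law is equivalent to a density-valued parametric law with dynamics $\dot{\ha}(\theta, t) = -\gamma \Phi(x(t),\theta)^{\!\T} g_e(x(t),t)^{\!\T} \nabla Q(e(t),t)$. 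By Assumption~\ref{assmp:rf_kernel}, the truth admits a representation $h(\cdot) = \int_\Theta \Phi(\cdot,\theta)\alpha(\theta)\, d\nu(\theta)$ with $\alpha \in L_2(\Theta,\nu)$, and the closed-loop error dynamics become $\dot e = f_e(e,t) + g_e(x,t)\int_\Theta \Phi(x,\theta)(\ha(\theta,t)-\alpha(\theta))\, d\nu(\theta)$.

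Next, I would take as composite Lyapunov candidate
\begin{equation*}
    V(e, \ha, t) = Q(e, t) + \frac{1}{2\gamma} \bignorm{\ha(\cdot,t) - \alpha}_{L_2(\Theta,\nu)}^2 .
\end{equation*}
Differentiating along trajectories, the cross term $\langle \nabla Q(e,t),\, g_e(x,t)\int_\Theta \Phi(x,\theta)(\ha - \alpha)\, d\nu\rangle$ generated by the error dynamics is exactly cancelled by the $L_2$ inner product $\frac{1}{\gamma}\int_\Theta (\ha - \alpha)^{\!\T}\dot{\ha}\, d\nu$ arising from the update law, leaving $\dot V = \langle \nabla Q(e,t), f_e(e,t)\rangle + \tfrac{\partial Q}{\partial t}(e,t) \leq -\rho(\norm{e}_2)$ by Assumption~\ref{assmp:lyapunov}(iii). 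Hence $Q(e,t)$ and $\norm{\ha(\cdot,t)-\alpha}_{L_2}$ are uniformly bounded, which via Assumption~\ref{assmp:lyapunov}(iv) and Assumption~\ref{assmp:error}(ii) gives uniform boundedness of $e(t)$ and $x(t)-x_d(t)$, and therefore of $x(t)$ since $x_d$ is bounded. The membership $u(\cdot,t)\in\calH$ then follows since $\ha(\cdot,t)\in L_2(\Theta,\nu)$ gives $u(\cdot,t)\in \calF_2 = \calH$ by the Assumption~\ref{assmp:rf_kernel} characterization.

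To upgrade boundedness to asymptotic convergence I would apply Barbalat's lemma to $\rho(\norm{e}_2)$. Uniform continuity in $t$ follows because $\dot e$ is locally bounded in $(x,e)$ uniformly in $t$ (combining Assumptions~\ref{assmp:basic},~\ref{assmp:error}, and the $L_2$ bound on $\ha-\alpha$ applied inside $u$), so $\norm{e}_2$ is Lipschitz in $t$ on the invariant compact set; continuity of $\rho$ then transfers uniform continuity. Since $\int_0^\infty \rho(\norm{e(\tau)}_2)\,d\tau < \infty$, Barbalat yields $\norm{e(t)}_2\to 0$, and Assumption~\ref{assmp:error}(iii) upgrades this to $\norm{x(t)-x_d(t)}_2 \to 0$.

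The main technical obstacle is not the Lyapunov computation itself but rather the justification of existence, uniqueness, and forward-completeness of $(x(t), e(t))$, since the closed loop is an integro-differential Volterra system with memory over $[0,t]$. My plan is to augment the state by the $L_2(\Theta,\nu)$-valued process $\ha(\cdot, t)$ and invoke a standard Picard--Lindel\"of argument for ODEs on a Banach space: the right-hand side of the $(x,\ha)$ system is locally Lipschitz into $\R^n\times L_2(\Theta,\nu)$ by the Lipschitz-in-$x$ clause of Assumption~\ref{assmp:second_moment_bound} combined with Assumptions~\ref{assmp:basic} and~\ref{assmp:error}, giving a local-in-time solution; the a priori bounds from the Lyapunov analysis then preclude finite-time blow-up and extend the solution to $[0,\infty)$ by the usual continuation argument. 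Care is also needed to ensure the Fubini swap remains valid throughout, but this is immediate on each compact time interval once the state is known to be bounded.
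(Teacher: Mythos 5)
Your proposal is correct and follows essentially the same route as the paper's proof: lifting the control law to a density-valued update $\ha(\cdot,t)$ in $L_2(\Theta,\nu)$, using the composite Lyapunov function $Q(e,t)+\tfrac{1}{2\gamma}\norm{\ha(\cdot,t)-\alpha}_{L_2(\Theta,\nu)}^2$ with exact cancellation of the cross term, establishing existence via a Banach-space Picard--Lindel\"of argument (the paper's Lemma~\ref{lem:existence} and Proposition~\ref{prop:banach_existence}) combined with the Lyapunov a priori bounds to rule out finite escape, and concluding with Barbalat's lemma after showing $t\mapsto\norm{e(t)}_2$ is uniformly Lipschitz. The only cosmetic difference is that the paper phrases the continuation step as a contradiction to a finite maximal existence time, which is the same argument you sketch.
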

Next, we study the interpolation properties of the input $u(x, t)$ along the desired trajectory. To this end, we 
strengthen Definition~\ref{def:local_lip_local_bound} to be uniform in $t$.
\begin{mydef}
\label{def:local_lipschitz_x_uniform_t}
Let $E_1$ and $E_2$ be normed vector spaces. 
A function $f(x, t)$ mapping $E_1 \times \R_{\geq 0} \mapsto E_2$
is said to be locally Lipschitz in $t$ uniformly in $x$ if the following
two conditions hold for every $R > 0$:
\begin{align*}
    \sup_{\norm{x}_{E_1}\leq R} \sup_{\substack{t_1, t_2 \in \R_{\geq 0}, \\t_1 \neq t_2}} \frac{\norm{f(x, t_1) - f(x, t_2)}_{E_2}}{\abs{t_1-t_2}} &< \infty, \\
    \sup_{t \in \R_{\geq 0}} \sup_{\substack{\norm{x_1}_{E_1} \leq R,\\ \norm{x_2}_{E_1} \leq R, \\x_1 \neq x_2}} \frac{\norm{f(x_1, t) - f(x_2, t)}_{E_2}}{\norm{x_1 - x_2}_{E_1}} &< \infty.
\end{align*}
\end{mydef}
With Definition~\ref{def:local_lipschitz_x_uniform_t} in hand, we may state the following theorem.
\begin{restatable}[Interpolation]{thm}{interpolation}
\label{thm:interpolation}
Consider the setting of Theorem~\ref{thm:nonparametric_conv}. Suppose furthermore that both $f_e(e, t)$ and $g_e(x, t)$ are locally Lipschitz in their first argument
uniformly in $t$.
Finally, suppose that for every $R > 0$,
\begin{align*}
    \int_\Theta \sup_{\norm{x}_2 \leq R} \opnorm{\Phi(x, \theta)}^2 d\nu(\theta) < \infty.
\end{align*}
Then the nonparametric input asymptotically interpolates the unknown in the span of the control matrix, $\lim_{t \rightarrow \infty} \norm{g_e(x(t), t)(u(x(t), t) - h(x(t)))}_2 = 0$.
\end{restatable}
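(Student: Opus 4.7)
The plan is to reduce the interpolation claim to a Barbalat-style convergence of $\dot{e}(t)$. Rearranging the error dynamics gives
\begin{align*}
g_e(x(t),t)\bigl(u(x(t),t)-h(x(t))\bigr) = \dot{e}(t) - f_e(e(t),t),
\end{align*}
and Theorem~\ref{thm:nonparametric_conv} already delivers $e(t)\to 0$ with $x(t),e(t)$ uniformly bounded. Combined with $f_e(0,t)=0$ and the hypothesized local Lipschitz continuity of $f_e$ in its first argument uniformly in $t$, this forces $f_e(e(t),t)\to 0$. Hence it suffices to show $\dot{e}(t)\to 0$, which I will obtain from the following corollary of Barbalat's lemma: if a differentiable signal $e(t)$ has a limit as $t\to\infty$ and $\dot{e}$ is uniformly continuous on $[0,\infty)$, then $\int_0^\infty\dot{e}(\tau)\,d\tau$ converges and Barbalat forces $\dot{e}(t)\to 0$.

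\textbf{Main technical step.} The non-trivial ingredient is Lipschitz regularity of $t\mapsto u(x(t),t)$. I pass to the $L_2(\Theta,\nu)$ representation of the RKHS, writing $h(\cdot)=\int_\Theta\Phi(\cdot,\theta)\alpha(\theta)\,d\nu(\theta)$ and, by unrolling the definition of $u$ and interchanging orders of integration,
\begin{align*}
u(x,t) = \int_\Theta\Phi(x,\theta)\hat{\alpha}(\theta,t)\,d\nu(\theta), \qquad \hat{\alpha}(\theta,t):=\int_0^t \Phi(x(\tau),\theta)^{\T} c(\tau)\,d\tau,
\end{align*}
with $c(\tau)=-\gamma g_e(x(\tau),\tau)^{\T}\nabla Q(e(\tau),\tau)$. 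The Lyapunov function $V=Q+\tfrac{1}{2\gamma}\|\hat{\alpha}-\alpha\|_{L_2}^{2}$ underlying Theorem~\ref{thm:nonparametric_conv} satisfies $\dot V\le -\rho(\|e\|_2)$ (the cross terms cancel exactly), so $\|\hat{\alpha}(\cdot,t)\|_{L_2}$ is uniformly bounded. Decomposing
\begin{align*}
u(x(t_1),t_1)-u(x(t_2),t_2) &= \int_\Theta[\Phi(x(t_1),\theta)-\Phi(x(t_2),\theta)]\hat{\alpha}(\theta,t_1)\,d\nu(\theta) \\
&\quad + \int_\Theta\Phi(x(t_2),\theta)[\hat{\alpha}(\theta,t_1)-\hat{\alpha}(\theta,t_2)]\,d\nu(\theta),
\end{align*}
I bound each piece by Cauchy--Schwarz in $L_2(\Theta,\nu)$. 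The first piece is $O(|t_1-t_2|)$ by Assumption~\ref{assmp:second_moment_bound} and Lipschitz continuity of $x(\cdot)$ (its time derivative is bounded along the bounded trajectory). The second piece invokes the strengthened integrability hypothesis $\int_\Theta\sup_{\|x\|_2\le R}\opnorm{\Phi(x,\theta)}^{2}\,d\nu(\theta)<\infty$ in two places: it furnishes a uniform bound on $\|\Phi(x(t_2),\cdot)\|_{L_2}$, and, applied to $\hat{\alpha}(\theta,t_1)-\hat{\alpha}(\theta,t_2)=\int_{t_2}^{t_1}\Phi(x(\tau),\theta)^{\T} c(\tau)\,d\tau$ together with boundedness of $c(\cdot)$, it yields $\|\hat{\alpha}(\cdot,t_1)-\hat{\alpha}(\cdot,t_2)\|_{L_2}\lesssim |t_1-t_2|$ via Jensen. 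This Lipschitz estimate on $u(x(t),t)$ is where the extra hypothesis actually gets used, and is the main obstacle.

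\textbf{Finishing.} Given the Lipschitz estimate above, uniform continuity of $\dot{e}$ follows routinely: $e(t)$, $x(t)$, and $h(x(t))$ are Lipschitz in $t$ (the first two by boundedness of $\dot{e}$ and $\dot{x}$, the third by local Lipschitz continuity of $h$ from Assumption~\ref{assmp:basic}), and the strengthened local Lipschitz hypothesis on $f_e$ and $g_e$ in their first argument uniformly in $t$ promotes these into Lipschitz bounds on $t\mapsto f_e(e(t),t)$ and $t\mapsto g_e(x(t),t)$. Hence $\dot{e}(t)=f_e(e(t),t)+g_e(x(t),t)(u(x(t),t)-h(x(t)))$ is a sum of products of bounded, Lipschitz-in-$t$ factors, and is therefore itself Lipschitz, in particular uniformly continuous. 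Applying the Barbalat corollary yields $\dot{e}(t)\to 0$, and combined with $f_e(e(t),t)\to 0$ we conclude $\|g_e(x(t),t)(u(x(t),t)-h(x(t)))\|_2\to 0$, which is the desired interpolation.
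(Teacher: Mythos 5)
Your proposal is correct and follows essentially the same route as the paper: rewrite $g_e(u-h)=\dot{e}-f_e(e,t)$, use $f_e(0,t)=0$ with the uniform-in-$t$ Lipschitz hypothesis to kill the $f_e$ term, and apply Barbalat to $e$ after establishing uniform regularity of $\dot{e}$, with the key step being the Lipschitz-in-$t$ estimate for $u(x(t),t)$ obtained from exactly the same decomposition (a $\Phi$-increment term controlled by Assumption~\ref{assmp:second_moment_bound} and an $\hat{\alpha}$-increment term controlled by the strengthened integrability hypothesis). The only cosmetic differences are that the paper proves uniform Lipschitzness of $\dot{e}$ rather than mere uniform continuity and states the uniform boundedness of $u(x(t),t)$ explicitly, both of which your argument supplies implicitly.
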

Mirroring the finite-dimensional setting considered by~\citet{boffi_neco_imp_reg}, we now demonstrate that the adaptive input in Theorem~\ref{thm:nonparametric_conv} converges to the minimum RKHS-norm interpolating solution.
\begin{restatable}[Implicit regularization]{thm}{impreg}
\label{thm:imp_reg}
Consider the setting of Theorem~\ref{thm:nonparametric_conv}. Define the interpolating set over the trajectory
\begin{equation*}
    \mathcal{A} := \left\{\bar{h} \in \calH : \bar{h}(x(t)) = h(x(t)), \    \forall t\geq 0\right\},
\end{equation*}
and assume that $\lim_{t\rightarrow\infty}u(\cdot, t)\in \mathcal{A}$. Then,
\begin{equation}
   \lim_{t\rightarrow\infty}u(\cdot, t) \in \argmin_{\bar{h}\in\mathcal{A}}\:\norm{\bar{h}(\cdot)}_{\calH}.
\end{equation}
\end{restatable}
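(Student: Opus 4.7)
The strategy is to apply a standard ``orthogonality of the residual'' argument from kernel theory, adapted to the operator-valued setting. Let $\gamma := \{x(\tau) : \tau \geq 0\}$ denote the realized trajectory, and let
\[
V := \closure\Span\left\{ \kernel(\cdot, y)\, w : y \in \gamma,\ w \in \R^d \right\} \subseteq \calH.
\]
The first step is to show that the nonparametric input $u(\cdot, t)$ belongs to $V$ for every $t \geq 0$, so that its limit $u_\infty := \lim_{t\to\infty} u(\cdot, t)$ also belongs to $V$ by closedness. This is because $u(\cdot, t) = \int_0^t \kernel(\cdot, x(\tau))\, c(\tau)\, d\tau$ is a Bochner integral of the $\calH$-valued curve $\tau \mapsto \kernel(\cdot, x(\tau)) c(\tau)$; Riemann sum approximations of this integral are finite linear combinations of elements in the spanning set of $V$, so the integral itself lies in $V$. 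Bochner integrability in $\calH$ over each finite interval is justified by Assumption~\ref{assmp:second_moment_bound} together with the uniform boundedness of $x(t)$ and $e(t)$ established in Theorem~\ref{thm:nonparametric_conv}, which in turn bound the coefficient $c(\tau) = -\gamma g_e(x(\tau),\tau)^\T \nabla Q(e(\tau),\tau)$.

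Next I would invoke the interpolation hypothesis. For any competitor $\bar{h} \in \mathcal{A}$, define $\delta := \bar{h} - u_\infty$. Because both $u_\infty \in \mathcal{A}$ (by hypothesis) and $\bar{h} \in \mathcal{A}$ agree with $h$ on the trajectory, $\delta(x(\tau)) = 0$ for every $\tau \geq 0$. The reproducing property of the operator-valued kernel (Definition~\ref{def:op_kernel}), namely
\[
\bigip{f}{\kernel(\cdot, y)\, w}_\calH = \bigip{f(y)}{w}_{\R^d} \quad \text{for all } f \in \calH,\ y \in \R^n,\ w \in \R^d,
\]
then gives $\ip{\delta}{\kernel(\cdot, x(\tau)) w}_\calH = 0$ for every $\tau \geq 0$ and $w \in \R^d$. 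By linearity and continuity of the inner product, $\delta \in V^\perp$.

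Combining the two steps yields the orthogonal decomposition $\bar{h} = u_\infty + \delta$ with $u_\infty \in V$ and $\delta \in V^\perp$. By Pythagoras,
\[
\norm{\bar{h}}_\calH^2 = \norm{u_\infty}_\calH^2 + \norm{\delta}_\calH^2 \geq \norm{u_\infty}_\calH^2,
\]
with equality iff $\delta = 0$. Since $\bar{h} \in \mathcal{A}$ was arbitrary, $u_\infty$ attains the minimum $\calH$-norm over $\mathcal{A}$, which is the claim.

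The only mildly delicate point is the first step — verifying that the Bochner integral representation places $u(\cdot,t)$ inside the \emph{closed} span $V$, and that this is preserved in the limit $t \to \infty$. Strong convergence $u(\cdot, t) \to u_\infty$ in $\calH$ is built into the hypothesis $\lim_{t\to\infty} u(\cdot,t) \in \mathcal{A}$ (so no additional work on convergence is needed), and closedness of $V$ then finishes the argument. Everything else is standard Hilbert space geometry.
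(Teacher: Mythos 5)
Your argument is correct, but it takes a genuinely different route from the paper. You prove the claim by Hilbert-space geometry: you place $u(\cdot,t)$ (hence its limit $u_\infty$) in the closed span $V$ of the kernel sections $\kernel(\cdot,x(\tau))w$ along the realized trajectory via a Bochner-integral/Riemann-sum argument, observe through the reproducing property that any two interpolants in $\mathcal{A}$ differ by an element of $V^\perp$, and conclude by Pythagoras -- essentially a representer-theorem-style orthogonality argument carried out directly in $\calH$. The paper instead works in the density parameterization of Assumption~\ref{assmp:rf_kernel}: it forms the Lyapunov-like quantity $\frac{1}{2}\norm{\hat{\alpha}(\cdot,t)-\bar{\alpha}}_{L_2(\Theta,\nu)}^2$ for an arbitrary competitor $\bar{h}\in\mathcal{A}$ with density $\bar{\alpha}$, integrates its time derivative, and notes that for interpolating $\bar{h}$ the cross term $\ip{\bar{\alpha}}{\partial_t\hat{\alpha}}_{L_2(\Theta,\nu)}$ reduces to a quantity involving only $h$ along the trajectory, so the limiting squared distance differs from $\frac{1}{2}\norm{\bar{\alpha}}^2_{L_2(\Theta,\nu)}$ by a competitor-independent constant; taking the arg min and invoking the correspondence between $L_2(\Theta,\nu)$ and $\calH$ gives the result. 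What your route buys: it avoids the density representation entirely (and hence the slightly delicate identification of $\norm{\cdot}_{\calH}$ with the $L_2$ norm of a representing density, which need not be injective), it works for any operator-valued RKHS with a bona fide reproducing property, and it isolates the only analytic ingredient as Bochner integrability of $\tau\mapsto\kernel(\cdot,x(\tau))c(\tau)$, which you correctly justify from Assumption~\ref{assmp:second_moment_bound} and the boundedness furnished by Theorem~\ref{thm:nonparametric_conv}. What the paper's route buys: it exposes the mechanism as an implicit-bias property of the adaptation law itself (mirroring the finite-dimensional argument of \citet{boffi_neco_imp_reg}) and uses only the pointwise update for $\hat{\alpha}$, with no need to discuss closed spans or $\calH$-valued integrals. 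Two small caveats, neither fatal: you invoke the reproducing identity $\bigip{f}{\kernel(\cdot,y)w}_{\calH}=\ip{f(y)}{w}$, which the paper's Definition~\ref{def:op_kernel} does not state explicitly but which is standard for operator-valued RKHSs (and clearly intended); and both your proof and the paper's implicitly read the hypothesis $\lim_{t\to\infty}u(\cdot,t)\in\mathcal{A}$ as convergence in a norm strong enough to pass to the limit (you in $\calH$, the paper in $L_2(\Theta,\nu)$), so you are not assuming more than the original argument does.
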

Given these results for the computationally expensive nonparametric input, we now turn to develop a theory of efficient randomized approximation schemes.
\section{Random feature approximation}
\label{sec:rf}
\subsection{Approximation theory}
We now demonstrate how the function space $\calF_2$ leads to efficient randomized approximation algorithms. These randomized algorithms will enable us to restore the computational advantages of classical finite-dimensional parametric approximations while retaining the expressiveness of the RKHS $\calF_2$ with high probability. Roughly speaking, the approach will be to apply the law of large numbers to the expectation \eqref{eqn:f2_func}, which leads to a finite-dimensional approximation
\begin{equation*}
    h(\cdot) \approx \frac{1}{K}\sum_{i=1}^K \Phi(\cdot, \theta_i)\alpha_i,
\end{equation*}
where the $\theta_i \sim \nu$ are drawn i.i.d.\ from the base measure $\nu$ and the $\alpha_i = \alpha(\theta_i) \in \R^{d_1}$ are treated as parameters to be learned. $K$ denotes the number of sampling points and will tune the accuracy of the approximation. We provide a bound on the number of random features $K$ needed to ensure that there exists a set of weights $\{\alpha_i\}$ capable of $\varepsilon$-uniformly approximating $h$ on a fixed compact set $X\subset\R^n$. To begin, let $B_\Phi(\delta)$ be any function that satisfies,
for any $\delta \in (0, 1)$,
\begin{align*}
    \Pr_{\theta \sim \nu}\left( \sup_{x \in X} \opnorm{\Phi(x, \theta)} > B_\Phi(\delta) \right) \leq \delta.
\end{align*}
Then, for any $\eta \in (0, 1)$, define a truncated version of $\Phi$ as
\begin{align*}
    \Phi_\eta(x, \theta) := \Phi(x, \theta) \ind\left\{ \opnorm{\Phi(x, \theta)} \leq B_\Phi(\eta) \right\}.
\end{align*}
We will be interested in approximating functions over the subset
\begin{equation*}
    \calF_2(B) = \left\{f(\cdot) = \int_{\Theta}\Phi(\cdot, \theta)\alpha(\theta)d\nu(\theta) \,\Bigg|\, \esssup_{\theta\in\Theta}\norm{\alpha(\theta)}_2 \leq B\right\} \subset \calF_2,
\end{equation*}
which is dense in $\calF_2$ as $B\rightarrow\infty$~\citep{rahimi08uniform}; this bound on the density $\alpha(\theta)$ is needed to obtain a uniform approximation result. With this notation in hand, we may extend the approximation theory of \citet{rahimi08uniform}
to vector-valued functions.
\begin{restatable}[Approximation error]{myprop}{uniform}
\label{prop:uniform_approx}
Let $X \subset \R^n$ be compact. Fix $\delta \in (0, 1)$, $B_h > 0$, $h \in \calF_2(B_h)$, and a positive integer $K$.
Let $\theta_1, ..., \theta_K$ be i.i.d.\ draws from $\nu$.
Put $\eta = \frac{\delta}{2K}$.
With probability at least $1-\delta$, there exist 
weights $\{\alpha_i\}_{i=1}^{K} \subset \R^{d_1}$ such that $\norm{\alpha_i}_2 \leq B_h$ for
$i=1, ..., K$, and
\begin{align*}
    \bignorm{\frac{1}{K} \sum_{i=1}^{K} \Phi(\cdot, \theta_i) \alpha_i - h }_\infty &\leq \frac{2}{K} \E \bignorm{\sum_{k=1}^{K} \varepsilon_i \Phi_\eta(\cdot, \theta_i)\alpha(\theta_i)}_\infty \\
    &\qquad + \sqrt{2} B_\Phi(\eta) B_h \sqrt{\frac{\log(2/\delta)}{K}} + B_h \sqrt{\frac{\delta\sup_{x \in X} \E\opnorm{\Phi(x, \theta)}^2 }{2K}}.
\end{align*}
Above, each $\varepsilon_i$ is an i.i.d.\ Rademacher random variable\footnote{That is, $\Pr(\varepsilon_i = 1) = \Pr(\varepsilon_i = -1) = 1/2$.} and $\norm{f}_{\infty} := \sup_{x\in X}\norm{f(x)}_2$.
\end{restatable}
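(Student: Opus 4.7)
The plan is to follow the standard Rahimi-Recht style argument: use the natural weight choice $\alpha_i = \alpha(\theta_i)$ (valid since $\|\alpha(\theta_i)\|_2 \leq B_h$ almost surely by the $\mathcal{F}_2(B_h)$ definition), so the sample average $f_K(x) := \frac{1}{K}\sum_{i=1}^{K} \Phi(x,\theta_i)\alpha(\theta_i)$ is an unbiased estimator of $h(x)$ at each fixed $x$. The core of the argument is then to control $\|f_K - h\|_\infty$ in high probability by combining (a) truncation to handle the fact that $\|\Phi(x,\theta)\|_{\mathrm{op}}$ is not assumed to be uniformly bounded, (b) bounded-differences concentration for the truncated sup-norm error, and (c) symmetrization to obtain the Rademacher bound on its expectation.

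First I would introduce the truncated random variable $\Phi_\eta(x,\theta_i)$ and the associated truncated averages $\tilde f_K(x) := \frac{1}{K}\sum_i \Phi_\eta(x,\theta_i)\alpha(\theta_i)$ and $\tilde h(x) := \mathbb{E}_\theta[\Phi_\eta(x,\theta)\alpha(\theta)]$. Define the truncation event $\mathcal{E} := \{\sup_{x \in X}\|\Phi(x,\theta_i)\|_{\mathrm{op}} \leq B_\Phi(\eta) \text{ for all } i \in [K]\}$; by the definition of $B_\Phi$ and a union bound, $\Pr(\mathcal{E}^c) \leq K\eta = \delta/2$. On $\mathcal{E}$ we have $\tilde f_K = f_K$, so
\begin{align*}
\|f_K - h\|_\infty \leq \|\tilde f_K - \tilde h\|_\infty + \|\tilde h - h\|_\infty.
\end{align*}
For the bias term, note that $\tilde h(x) - h(x) = -\mathbb{E}[\Phi(x,\theta)\mathbf{1}\{\|\Phi(x,\theta)\|_{\mathrm{op}} > B_\Phi(\eta)\}\alpha(\theta)]$, so $\|\alpha(\theta)\|_2 \leq B_h$ together with Cauchy-Schwarz and the pointwise bound $\Pr(\|\Phi(x,\theta)\|_{\mathrm{op}} > B_\Phi(\eta)) \leq \eta$ gives $\|\tilde h - h\|_\infty \leq B_h \sqrt{\eta \sup_{x \in X}\mathbb{E}\|\Phi(x,\theta)\|_{\mathrm{op}}^2}$, matching the third term after substituting $\eta = \delta/(2K)$.

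For the first term I would apply McDiarmid's inequality to $F(\theta_1,\ldots,\theta_K) := \|\tilde f_K - \tilde h\|_\infty$. Replacing a single $\theta_i$ changes each coordinate of $\tilde f_K(x)$ by at most $\frac{2}{K} B_\Phi(\eta) B_h$ in norm (using $\|\Phi_\eta\|_{\mathrm{op}} \leq B_\Phi(\eta)$ and $\|\alpha(\theta_i)\|_2 \leq B_h$), so $F$ has bounded differences $\frac{2}{K} B_\Phi(\eta) B_h$. McDiarmid then yields
\begin{align*}
F \leq \mathbb{E}[F] + B_\Phi(\eta) B_h \sqrt{\frac{2\log(2/\delta)}{K}}
\end{align*}
with probability at least $1-\delta/2$. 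A standard symmetrization argument (introducing an independent i.i.d.\ copy $\theta_i'$, applying Jensen, and inserting Rademacher signs that exploit exchangeability) bounds $\mathbb{E}[F] \leq \frac{2}{K}\mathbb{E}\|\sum_i \varepsilon_i \Phi_\eta(\cdot,\theta_i)\alpha(\theta_i)\|_\infty$, producing the first term of the claim.

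Finally, a union bound over the truncation event $\mathcal{E}$ and the McDiarmid event produces the stated bound with probability at least $1 - \delta$. The main obstacle is the careful bookkeeping of the truncation: ensuring that the operator-valued bound $B_\Phi(\eta)$ controls the bounded-differences constant, and that the $L^2$ tail bound survives the transition from the per-$x$ event $\{\|\Phi(x,\theta)\|_{\mathrm{op}} > B_\Phi(\eta)\}$ to a uniform statement in $x$. The former is handled by truncating with the sup-over-$x$ quantity $B_\Phi$, while the latter reduces to the pointwise Markov/Chebyshev estimate above, for which only the pointwise tail bound is needed.
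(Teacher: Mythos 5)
Your proposal is correct and follows essentially the same route as the paper's proof: the same choice $\alpha_i = \alpha(\theta_i)$, the same per-$x$ truncation $\Phi_\eta$ with the event that all $K$ draws stay below $B_\Phi(\eta)$ uniformly over $X$ (costing $\delta/2$ by a union bound), McDiarmid plus symmetrization for the truncated sup-norm deviation, and Cauchy--Schwarz with the tail probability $\eta$ for the truncation bias. No gaps; the bookkeeping you flag (sup-over-$x$ truncation level controlling the bounded-differences constant, pointwise tail for the bias) is handled exactly as in the paper.
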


In order to bound the Rademacher complexity term 
appearing in Proposition~\ref{prop:uniform_approx},
we now make a few more assumptions on the structure of $\Phi(x, \theta)$. These assumptions are motivated by the operator-valued Bochner's theorem \citep{brault16randomfeatures}.
\begin{assmp}
\label{assmp:weights_biases}
The feature space $\Theta$ is a subset of $\R^{n+1}$, so that $\theta \in \Theta$ may be written as $\theta = (w, b)$ with $w \in \R^n$ and $b\in\R$. Moreover, the feature map can be factorized as $\Phi(x, \theta) = \phi(w^\T x + b)M(w)$ for $M:\R^{n}\rightarrow \R^{d \times d_1}$ and a $1$-Lipschitz scalar function $\phi : \R \rightarrow [-1, 1]$.
\end{assmp}
Because $\abs{\phi} \leq 1$, we may take $B_\Phi(\delta)$ to be any function that satisfies
$\Pr( \opnorm{M(w)} > B_\Phi(\delta)) \leq \delta$.
Accordingly, we have
$\Phi_\eta(x, \theta) = M_\eta(w) \phi(w^\T x + b)$
with $M_\eta(w)$ defined as 
$M_\eta(w) := M(w) \ind\{ \opnorm{M(w)} \leq B_\Phi(\eta) \}$. With these extra assumptions in place, we can bound the Rademacher complexity term as follows.
\begin{restatable}[Rademacher complexity bound]{myprop}{rad}
\label{prop:rademacher_bound}
Let Assumption~\ref{assmp:weights_biases} hold, and denote $B_X := \sup_{x \in X} \norm{x}_2$. Then for any $\eta \in (0, 1)$,
\begin{align*}
  \frac{2}{K} \E \bignorm{ \sum_{i=1}^{K} \varepsilon_i \Phi_\eta(\cdot; \theta_i) \alpha(\theta_i)}_\infty &\leq \frac{4 B_h B_\Phi(\eta)}{\sqrt{K}} \left[ B_X \sqrt{\E \norm{w_1}_2^2} + \sqrt{d_1} \right].
\end{align*}
\end{restatable}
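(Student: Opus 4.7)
Under Assumption~\ref{assmp:weights_biases} the object to bound takes the form $F(x) := \sum_{i=1}^K \varepsilon_i \phi(w_i^\T x + b_i)\,v_i$ with $v_i := M_\eta(w_i)\alpha(\theta_i) \in \R^d$. Since $M_\eta(w_i) \in \R^{d \times d_1}$, one has both the operator bound $\norm{v_i}_2 \le B_\Phi(\eta) B_h$ and the Frobenius bound $\norm{v_i}_2 \le \norm{M_\eta(w_i)}_F \norm{\alpha(\theta_i)}_2 \le \sqrt{d_1}\,B_\Phi(\eta) B_h$. The plan is to mirror the scalar argument of \cite{rahimi08uniform}: pick an anchor $x_0 \in X$ (take $x_0 = 0$ after translating $X$), decompose
\[
\sup_{x \in X}\norm{F(x)}_2 \;\le\; \norm{F(x_0)}_2 \;+\; \sup_{x \in X}\norm{F(x) - F(x_0)}_2,
\]
bound the two pieces separately, combine, and multiply by $2/K$.

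I would handle the anchor term by Jensen together with the Rademacher second-moment identity, yielding $\E_\varepsilon\norm{F(x_0)}_2 \le \bigl(\sum_i \phi(w_i^\T x_0 + b_i)^2 \norm{v_i}_2^2\bigr)^{1/2}$; using $\abs{\phi}\le 1$ and the Frobenius bound gives $\E\norm{F(x_0)}_2 \le \sqrt{Kd_1}\,B_\Phi(\eta) B_h$, which is the source of the $\sqrt{d_1}$ summand in the final rate. For the deviation term, the $1$-Lipschitzness of $\phi$ lets me write $\phi(w_i^\T x + b_i) - \phi(w_i^\T x_0 + b_i) = \tilde\phi_i(w_i^\T(x - x_0))$ for a $1$-Lipschitz $\tilde\phi_i$ vanishing at $0$. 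After dualizing $\norm{v}_2 = \sup_{\norm{u}_2 \le 1}u^\T v$, I would apply the scalar Ledoux-Talagrand contraction to the inner-product process to linearize $\tilde\phi_i$, together with the distributional identity $\varepsilon_i\abs{c} \stackrel{d}{=} \varepsilon_i c$, reducing the deviation to a matrix Rademacher sum $\E\sup_x\norm{F(x) - F(x_0)}_2 \le 2 B_X\,\E\opnorm{\sum_i \varepsilon_i v_i w_i^\T}$. Jensen on the Frobenius norm then controls this by $\bigl(\sum_i\norm{v_i}_2^2\norm{w_i}_2^2\bigr)^{1/2}$; taking $\E_\theta$ inside and using the tight operator bound $\norm{v_i}_2 \le B_\Phi(\eta)B_h$ produces $B_\Phi(\eta)B_h\sqrt{K\,\E\norm{w_1}_2^2}$.

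The principal obstacle is making the vector contraction rigorous: applying scalar Ledoux-Talagrand pointwise in $u$ only bounds $\sup_u\E_\varepsilon\sup_x$, whereas the quantity of interest is $\E_\varepsilon\sup_{x,u}$, and these orders of supremum and expectation do not commute. The cleanest resolutions are either to invoke Maurer's vector contraction inequality (which handles vector Rademacher averages at the cost of doubly-indexed Rademacher variables and a factor of $\sqrt{2}$), or to pass to a fine $\varepsilon$-net of $S^{d-1}$ and union-bound; an elementary alternative is to bound each coordinate of $F(x) - F(x_0)$ by scalar Ledoux-Talagrand, reassemble via $\norm{\cdot}_2^2 = \sum_j(\cdot)_j^2$, and apply Talagrand's concentration inequality for the supremum of a Rademacher process to control $\E\sup_x(F_j(x) - F_j(x_0))^2$. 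Combining the anchor contribution $\sqrt{Kd_1}\,B_\Phi(\eta)B_h$ with the deviation contribution $2 B_X B_\Phi(\eta) B_h \sqrt{K\,\E\norm{w_1}_2^2}$ and multiplying by $2/K$ yields the claimed bound $\tfrac{4 B_h B_\Phi(\eta)}{\sqrt K}\bigl[B_X\sqrt{\E\norm{w_1}_2^2} + \sqrt{d_1}\bigr]$.
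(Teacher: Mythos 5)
Your high-level plan (dualize the $\ell_2$ norm, contract away $\phi$, reduce to a linear Rademacher process in $w_i$) is the right idea, but as written the proof has a genuine gap at its central step, and you say so yourself: the bound $\E_\varepsilon\sup_{x}\bignorm{F(x)-F(x_0)}_2 \leq 2B_X\,\E\opnorm{\sum_i \varepsilon_i v_i w_i^\T}$ is obtained by applying scalar Ledoux--Talagrand contraction inside a supremum over the pair $(x,u)$, where the multiplier $u^\T v_i$ depends on the dual variable. Scalar contraction only controls $\sup_u \E_\varepsilon \sup_x$, not $\E_\varepsilon\sup_{x,u}$, and listing candidate fixes (Maurer's vector contraction, an $\varepsilon$-net over the sphere, coordinate-wise contraction plus Talagrand concentration) without executing one does not close the argument. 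Worse, your constant budget leaves no room for the fix: the deviation term as you account for it exactly saturates the $\frac{4B_hB_\Phi(\eta)}{\sqrt{K}}B_X\sqrt{\E\norm{w_1}_2^2}$ part of the claim, while Maurer's inequality costs an extra $\sqrt{2}$ and forces you to treat $h_i(s,u)=(u^\T v_i)\tilde\phi_i(s)$ as Lipschitz in $(s,u)$ jointly (reintroducing a dual-sphere term), and the Talagrand route needs $\E\sup(\cdot)^2$ rather than $(\E\sup|\cdot|)^2$, again inflating constants. The anchor slack you reserve (using the Frobenius bound $\sqrt{d_1}B_\Phi(\eta)B_h$ where the operator bound already gives $B_\Phi(\eta)B_h$) does not scale with $B_X\sqrt{\E\norm{w_1}_2^2}$, so it cannot absorb these losses in general.

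The paper's proof shows the decomposition is unnecessary: it applies Maurer's vector-contraction lemma \emph{directly} to the dualized process $\E\sup_{x,\psi}\sum_i \varepsilon_i\,\psi^\T M_{\eta,i}\alpha_i\,\phi(w_i^\T x + b_i)$, taking the index set to be the pairs $(x,\psi)$, the feature map $f_{x,\psi}(w_i,b_i)=(w_i^\T x + b_i,\ \psi)$, and $h_i(v_1,v_2)=v_2^\T M_{\eta,i}\alpha_i\,\phi(v_1)$, which is $\sqrt{2}\,B_hB_\Phi(\eta)$-Lipschitz since $\abs{\phi}\le 1$ and $\norm{M_{\eta,i}\alpha_i}_2\le B_\Phi(\eta)B_h$. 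The contraction then produces two clean linear terms, $\E\sup_x\sum_i\xi_i w_i^\T x \le B_X\sqrt{K\,\E\norm{w_1}_2^2}$ and $\E\sup_\psi\sum_i\zeta_i^\T\psi \le \sqrt{Kd_1}$, so the $\sqrt{d_1}$ in the statement comes from the dual sphere, not from a Frobenius bound at an anchor point, and the constant $4$ falls out exactly. If you want to salvage your route, carry out the Maurer step explicitly on your deviation term and redo the constant bookkeeping; otherwise the statement you prove will be the right rate but not the stated inequality.
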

Combining Proposition~\ref{prop:uniform_approx} and Proposition~\ref{prop:rademacher_bound}, we have that
with probability $1-\delta$,
\begin{align}
    &\inf_{\{\alpha_i\}_{i=1}^{K} \subseteq \R^{d_1} : \norm{\alpha_i}_2 \leq B_h} \bignorm{\frac{1}{K} \sum_{k=1}^{K} \Phi(\cdot, \theta_i) \alpha_i - h}_\infty \nonumber \\
    &\leq \frac{B_h}{\sqrt{K}} \bigg[ 2 B_\Phi\left(\frac{\delta}{2K}\right) \left(2 B_X \sqrt{\E\norm{w_1}_2^2} + 2 \sqrt{d_1} + \sqrt{\log(2/\delta)}\right) + \sqrt{\frac{\delta}{2} \E\opnorm{M(w)}^2 } \bigg]. \label{eq:final_rf_bound}
\end{align}
To simplify this expression, we now look at some particular choices of kernels.

\subsection{Examples of Reproducing Kernels}
\label{sec:rf:examples}

In what follows, we consider a few examples of vector-valued kernels.
\subsubsection{Shift-invariant kernels}

First, we consider shift-invariant kernels from~\citet{brault16randomfeatures} and~\citet{minh16operator}.
Let $\mathsf{k}(x - z)$ be an arbitrary scalar shift-invariant kernel
and denote by $\mu$ the normalized inverse Fourier transform of $\mathsf{k}(\cdot)$.
We will assume generically that $\E_{w \sim \mu}\norm{w}_2^2 \asymp n$ where $\mu$ denotes the marginal of $\nu$ over $b$.

\paragraph{Decomposable kernels}
Let $\kernel(x, z) = A \mathsf{k}(x - z)$ for any positive semidefinite $A = BB^\T$.
Then $\Phi(x, \theta) = B\cos(w^\T x + b)$ and  $B_\Phi(\delta) = \opnorm{B}$.
Here, the approximation error bound \eqref{eq:final_rf_bound}
scales as $\frac{B_h \opnorm{B}}{\sqrt{K}} \left(B_X \sqrt{n} + \sqrt{d_1} \right)$.

\paragraph{Curl-free kernel}
Let $n=d$ and set $\kernel(x, z) = -\nabla^2 \mathsf{k}(x - z)$.
Then $A(w) = ww^\T$ and $\Phi(x, \theta) = w \cos(w^\T x + b)$. If $\mu \sim N(0, \sigma^2 I)$, then
$B_\Phi(\delta) = \sqrt{n} + 2 \sigma \sqrt{\log(1/\delta)}$
by standard Gaussian concentration results.
The approximation error bound \eqref{eq:final_rf_bound} then scales as $\frac{B_h (B_x \vee 1)}{\sqrt{K}}(n + \log{K})$.

\paragraph{Divergence-free kernel}
Again let $n=d$. Set $\kernel(x, z) = (\nabla^2 - I \Delta) \mathsf{k}(x-z)$,
where $\Delta$ is the Laplacian and $I$ is the identity matrix.
Then $A(w) = \norm{w}_2^2 P^\perp_{w}$,
where $P_{M}$ denotes the orthogonal projection onto the range of $M$ and
$P^\perp_{M} = I - P_{M}$. Hence,
$\Phi(x, \theta) = \norm{w}_2 P^\perp_{w} \cos(w^\T x + b)$.
If $\nu \sim N(0, \sigma^2 I)$, then
$B_\Phi(\delta) = \sqrt{n} + 2\sigma \sqrt{\log(1/\delta)}$.
The approximation error bound \eqref{eq:kernel_form}
also scales as $\frac{B_h (B_x \vee 1)}{\sqrt{K}}(n + \log{K})$.

\subsubsection{Other kernels}
We now consider some other possible choices of kernels.

\paragraph{Kernels leveraging prior physical information}
Any known physical structure can easily be combined with
reproducing kernels. As a concrete example,
suppose the state $x$ decomposes as $x = (x_1, x_2) \in \R^{n_1 + n_2}$, and
that the unknown dynamics factorizes as $h(x) = h_1(x_1) h_2(x_2)$, where $h_1 : \R^{n_1} \rightarrow \R^d$ is a known vector-valued function and
$h_2 : \R^{n_2} \rightarrow \R$ is an unknown function in an RKHS with scalar kernel $\mathsf{k}$.
Then we can set $\kernel((x_1, x_2), (z_1, z_2)) = h_1(x_1) h_1(z_1)^\T \mathsf{k}(x_2, z_2)$.
This type of structural simplification is common
in, e.g., robotic applications~\citep{sanner_robot}.

\paragraph{The neural tangent kernel}
The neural tangent kernel \citep{jacot18ntk} was recently developed as an approximation to infinitely wide deep neural networks. Consider a network $h(x, \theta)$, where
$x$ denotes the network input
and $\theta$ denotes the network parameters.
The NTK is defined as the following kernel:
\begin{align*}
    \kernel(x, z) = \E_{\theta \sim \mathcal{D}}\left[\frac{\partial h}{\partial \theta}(x, \theta)^\T \frac{\partial h}{\partial \theta}(z, \theta)\right],
\end{align*}
where $\mathcal{D}$ is the distribution used to initialize the
weights of the network.
Expressions of the NTK for various common architectures are available in closed form \citep{arora2019ntk}.
\section{Randomized adaptive control and prediction}
\label{sec:p_results}
We now demonstrate how the nonparametric input in Theorem~\ref{thm:nonparametric_conv} can be approximated using the uniform approximation theory of Section~\ref{sec:rf} to obtain adaptive control and prediction algorithms with high-probability guarantees of convergence. We state completely general results under the assumption that the unknown dynamics $h(\cdot)$ can be uniformly approximated to a desired degree of accuracy, similar to the classical results of~\citet{sanner_nn} but in a generalized context. Taking $h(\cdot)$ to lie in the function space $\calF_2$ and applying the results of Section~\ref{sec:rf} immediately gives a sufficient bound on the number of random features needed to track the desired trajectory to a given tolerance. 
\subsection{Deadzones}
Before we present our main approximate algorithms, we first introduce the notion of a deadzone. Since any finite-dimensional
approximation to $h(\cdot)$ will have some non-zero approximation error,
any adaptive algorithm cannot learn below this noise floor;
a deadzone allows us to disable adaptation when the only residual error
remaining is due to approximation error.

\begin{mydef}
\label{def:dead}
Let $\Delta > 0$.
A continuously differentiable function $\sigma_\Delta : \R_{\geq 0} \rightarrow \R$ is called
$\Delta$-admissible deadzone if:
\begin{enumerate}[(i)]
    \item $0 \leq \sigma_\Delta$ and $\sigma_\Delta(x) = 0$ for all $x \in [0, \Delta]$,
    \item $0 \leq \sigma'_\Delta$ and $\sigma'_\Delta(x) = 0$ for all $x \in [0, \Delta]$, 
    \item $\sigma'_\Delta$ is locally Lipschitz. \label{eq:condition_locally_lip}
\end{enumerate}
The function $\sigma_\Delta$ is called a $(\Delta, L, B)$-admissible deadzone if
condition \eqref{eq:condition_locally_lip} is replaced with the condition that
$\sigma'_\Delta$ is $L$-Lipschitz and $B$-bounded.
\end{mydef}

We now give some examples of $\Delta$-admissible deadzones.
The first example is a direct extension of the deadzone used
in \citet{sanner_nn}.
\begin{restatable}{ex}{sdeltadeadzone}
\label{prop:s_delta_deadzone}
Fix a scalar $\delta > 0$. Let $s_\delta : \R_{\geq 0} \rightarrow \R_{\geq 0}$
be defined as $s_\delta(x) := (x-\delta) \ind\{x > \delta\}$.
For any $\Delta > 0$, the function $x \mapsto s_{\sqrt{\Delta}}^2(\sqrt{x})$ is a
$(\Delta, 1/(2\Delta), 1)$-admissible deadzone.
\end{restatable}

An issue with a deadzone based on $s_\delta$ is that the Lipschitz
constant of the derivative diverges with vanishing $\Delta$. This makes it challenging to prove sharp ``approximate interpolation'' results similar to Theorem~\ref{thm:interpolation}.
To remedy this issue, we construct a deadzone
where the Lipschitz constant of the derivative is 
decoupled from $\Delta$. The following construction
is directly inspired by smooth approximations to the hinge loss
for support vector machines (see e.g. \citet{chapelle07svm}).
\begin{restatable}{ex}{sdeltagammadeadzone}
\label{prop:s_delta_gamma_deadzone}
Fix $\delta > 0$ and $\gamma > 0$.
Define $s_{\delta,\gamma}$ as:
\begin{align*}
    s_{\delta,\gamma}(x) := \begin{cases}
        0 &\text{if } x \leq \delta, \\
        \frac{(x-\delta)^2}{4\gamma} &\text{if } x \in (\delta, \delta+2\gamma), \\
        x - (\delta+\gamma) &\text{if } x \geq \delta + 2\gamma.
    \end{cases}
\end{align*}
For any $\Delta > 0$ and $\gamma > 0$, the function $s_{\Delta,\gamma}$ is a $(\Delta, 1/(2\gamma), 1)$-admissible deadzone.
\end{restatable} 
Worked details of Examples~\ref{prop:s_delta_deadzone} and~\ref{prop:s_delta_gamma_deadzone} may be found in Appendix~\ref{app:p_results}. Our results to come will be stated in terms of an arbitrary deadzone according to Definition~\ref{def:dead}, but concrete instantiations can be found via these prescriptions.

\subsection{Adaptive control}
We are now ready to present our main result
in the setting of approximate control. The following is a general result about adaptive control with uniform approximation that can be applied with an arbitrary choice of basis.
\begin{restatable}[Adaptive control with finite-dimensional approximation]{mythm}{acfiniteapprox}
\label{thm:ac_finite_approx}
Suppose that Assumption~\ref{assmp:lyapunov} holds.
Let $\alpha_{\ell,0} := \arg\min_{\alpha \in O_\ell} \psi_\ell(\alpha)$ for $\ell \in \{p, m\}$.
Fix $B_{\alpha_p} > 0$ satisfying $\bregdp{\alpha_p}{\alpha_{p,0}} \leq B_{\alpha_p}$,
$B_{\alpha_m} > 0$, and 
$R$ satisfying
\begin{align*}
    R > \mu_1^{-1}\left( Q(e(0), 0) + B_{\alpha_p} + B_{\alpha_m} \right).
\end{align*}
Suppose there exists a finite $C_e$ such that
for every $T > 0$:
\begin{align}
    \max_{t \in [0, T]} \norm{e(t)}_2 \leq R \text{ implies } \norm{x(T) - x_d(T)}_2 \leq C_e R. \label{eq:small_error_implies_small_state}
\end{align}
Let $\Psi : \R^n \rightarrow \R^{d \times m}$ be a locally Lipschitz feature map.
Define the constants
\begin{align*}
    B_d &:= \sup_{t \geq 0} \norm{x_d(t)}_2, \\
    B_x &:= C_e R + B_d, \\
    B_{g_e} &:= \sup_{t \geq 0} \sup_{\norm{x}_2 \leq B_x} \opnorm{g_e(x, t)}, \\
    B_{\nabla Q} &:= \sup_{t \geq 0} \sup_{\norm{e}_2 \leq R} \norm{\nabla Q(e, t)}_2, \\
    B_{\mathrm{approx}} &:= \inf_{\bregdm{\alpha_m}{\alpha_{m,0}} \leq B_{\alpha_m}}  \sup_{\norm{x}_2 \leq B_x} \norm{\Psi(x)\alpha_m - h(x)}_2.
\end{align*}
Let $\Delta$ be any positive constant satisfying
\begin{align*}
    \Delta \geq \mu_2(\rho^{-1}(2B_{g_e} B_{\nabla Q} B_{\mathrm{approx}})),
\end{align*}
and let $\sigma_\Delta$ be a $\Delta$-admissible deadzone.
Then the dynamical system
\begin{align*}
    \dot{x} &= f(x, t) + g(x, t)(u(x, t) - Y(x, t) \alpha_p - h(x)), \\
    \dot{e} &= f_e(e, t) + g_e(x, t)( u(x, t) - Y(x, t) \alpha_p - h(x)), \\
    u(x, t) &= Y(x, t) \hat{\alpha}_p + \Psi(x) \hat{\alpha}_m, \\
    \frac{d}{dt} \nabla \psi_p(\hat{\alpha}_p) &= -\sigma'_\Delta(Q(e, t)) Y(x, t)^\T g_e(e, t)^\T \nabla Q(e, t), \\
    \frac{d}{dt} \nabla \psi_m(\hat{\alpha}_m) &= - \sigma'_\Delta(Q(e, t)) \Psi(x)^\T g_e(e, t)^\T \nabla Q(e, t),
\end{align*}
with initial conditions $x(0) = x_0$,
$e(0) = m(x_0, 0)$, $\hat{\alpha}_p(0) = \alpha_{p,0}$,
and $\hat{\alpha}_m(0) = \alpha_{m,0}$ has a
solution $(x(t), e(t), \hat{\alpha}_p(t), \hat{\alpha}_m(t))$ that exists for all $t \geq 0$.
Furthermore,
\begin{align*}
    \limsup_{t \rightarrow \infty} \norm{e(t)}_2 \leq \mu_1^{-1}(\Delta).
\end{align*}
\end{restatable}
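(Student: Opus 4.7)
The plan is a composite Lyapunov argument combined with a bootstrap to close the mutual dependence between the state bound (needed to validate $B_{g_e}, B_{\nabla Q}, B_{\mathrm{approx}}$) and the error bound (needed to apply \eqref{eq:small_error_implies_small_state}). For each $\varepsilon > 0$, pick $\alpha_m^\star$ with $\bregdm{\alpha_m^\star}{\alpha_{m,0}} \leq B_{\alpha_m}$ and $\sup_{\norm{x}_2 \leq B_x}\norm{\Psi(x)\alpha_m^\star - h(x)}_2 \leq B_{\mathrm{approx}} + \varepsilon$ (sending $\varepsilon \downarrow 0$ at the end), and define
\[
V(t) := \sigma_\Delta(Q(e(t), t)) + \bregdp{\alpha_p}{\hat{\alpha}_p(t)} + \bregdm{\alpha_m^\star}{\hat{\alpha}_m(t)}.
\]
Using the standard Bregman identity $\tfrac{d}{dt}\bregd{\alpha}{\hat{\alpha}} = -(\alpha - \hat{\alpha})^\T \tfrac{d}{dt}\nabla\psi(\hat{\alpha})$, the mirror-descent updates, Assumption~\ref{assmp:lyapunov}(iii), and the decomposition $u - Y\alpha_p - h = Y(\hat{\alpha}_p - \alpha_p) + \Psi(\hat{\alpha}_m - \alpha_m^\star) + (\Psi\alpha_m^\star - h)$, the two parameter-error cross terms in $\sigma'_\Delta(Q)\dot Q$ are cancelled exactly by the Bregman derivatives, leaving
\[
\dot V \leq \sigma'_\Delta(Q(e,t))\bigl[-\rho(\norm{e}_2) + B_{\nabla Q} B_{g_e}(B_{\mathrm{approx}} + \varepsilon)\bigr].
\]

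For the bootstrap, let $[0, T_{\max})$ be the maximal interval of existence (well-defined by Assumption~\ref{assmp:basic}, locally Lipschitz $\Psi$, Definition~\ref{def:dead}(iii), and the $C^1$ diffeomorphism property of $\nabla\psi_p, \nabla\psi_m$ coming from strong convexity plus locally Lipschitz Hessian). Set $\tau := \inf\{t < T_{\max} : \norm{e(t)}_2 = R\}$. On $[0, \tau)$, \eqref{eq:small_error_implies_small_state} gives $\norm{x(t)}_2 \leq B_x$, so the above bound is valid. If $Q(e,t) \leq \Delta$ then $\sigma'_\Delta(Q) = 0$ by Definition~\ref{def:dead}(ii); if $Q(e,t) > \Delta$, Assumption~\ref{assmp:lyapunov}(iv) together with the choice $\Delta \geq \mu_2(\rho^{-1}(2B_{g_e}B_{\nabla Q}B_{\mathrm{approx}}))$ and monotonicity of $\mu_2$ give $\rho(\norm{e}_2) > 2B_{g_e}B_{\nabla Q}B_{\mathrm{approx}}$; both regimes yield $\dot V \leq 0$ for $\varepsilon$ small. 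Hence $V(t) \leq V(0) \leq Q(e(0), 0) + B_{\alpha_p} + B_{\alpha_m}$ (using $\sigma_\Delta(x) \leq x$ for the admissible deadzones of Examples~\ref{prop:s_delta_deadzone}--\ref{prop:s_delta_gamma_deadzone}). Inverting the coercive $\sigma_\Delta$ on $[\Delta, \infty)$ and applying Assumption~\ref{assmp:lyapunov}(iv) together with the strict inequality in the hypothesis on $R$ then yields $\norm{e(t)}_2 < R$ on $[0, \tau)$, contradicting $\norm{e(\tau)}_2 = R$ unless $\tau = T_{\max}$; uniform boundedness of $(e, \hat{\alpha}_p, \hat{\alpha}_m, x)$ promotes this to $T_{\max} = \infty$ by standard ODE continuation.

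Asymptotic convergence follows from Barbalat's lemma. Integrating $\dot V \leq -\sigma'_\Delta(Q(e,t))[\rho(\norm{e}_2) - B_{g_e}B_{\nabla Q}B_{\mathrm{approx}}]_+$ over $[0, \infty)$ and using $V \geq 0$ gives integrability of the right-hand side. Uniform boundedness of all signals together with local Lipschitz regularity of $\nabla Q, \sigma'_\Delta, g_e, \Psi, h$ renders the integrand uniformly continuous in $t$, so Barbalat forces it to vanish. Either $\sigma'_\Delta(Q(e,t)) \to 0$, giving $\limsup_t Q(e(t), t) \leq \Delta$, or $\rho(\norm{e(t)}_2) \to B_{g_e}B_{\nabla Q}B_{\mathrm{approx}}$, giving $\limsup_t \norm{e(t)}_2 \leq \rho^{-1}(B_{g_e}B_{\nabla Q}B_{\mathrm{approx}}) \leq \mu_2^{-1}(\Delta) \leq \mu_1^{-1}(\Delta)$ via $\mu_1 \leq \mu_2$. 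Both cases give $\limsup_{t \to \infty}\norm{e(t)}_2 \leq \mu_1^{-1}(\Delta)$ after sending $\varepsilon \downarrow 0$. The main obstacle is closing the bootstrap sharply: $\sigma_\Delta$ must serve both as an upper bound at $t=0$ (via $\sigma_\Delta(x) \leq x$) and as a coercive lower bound on $[\Delta, \infty)$ to extract $Q(e(t), t)$ from $\sigma_\Delta(Q(e(t), t)) \leq V(0)$, and for smooth deadzones like $s_{\Delta,\gamma}$ this introduces a small additive $\gamma$-slack that must be absorbed into the strict inequality in the hypothesis on $R$.
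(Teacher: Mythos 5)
Your overall route is the paper's: the same composite function $V = \sigma_\Delta(Q(e,t)) + \bregdp{\alpha_p}{\hat{\alpha}_p} + \bregdm{\alpha_m^\star}{\hat{\alpha}_m}$ (the paper simply takes $\alpha_m^\star$ to be an exact minimizer, noting the infimum defining $B_{\mathrm{approx}}$ is attained, so the $\varepsilon$-bookkeeping is unnecessary), the same cancellation of the parameter cross terms via the mirror-descent updates, the same bootstrap on the region $\norm{e}_2 \le R$ to validate $B_x$, $B_{g_e}$, $B_{\nabla Q}$, $B_{\mathrm{approx}}$, and the same integrate-then-Barbalat ending giving $\sigma'_\Delta(Q(e(t),t)) \to 0$ and hence $\limsup_{t\to\infty}\norm{e(t)}_2 \le \mu_1^{-1}(\Delta)$ (your ``either/or'' split of the vanishing product is looser than needed; the paper only ever uses the first branch).

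The genuine gap is precisely the step you flag at the end, and your proposed fix does not close it. From $V(t) \le V(0)$ you recover a bound on $\norm{e(t)}_2$ by inverting $\sigma_\Delta$ on $[\Delta,\infty)$; this yields only $Q(e(t),t) \le \sigma_\Delta^{-1}(V(0))$, which for the example deadzones is of size roughly $V(0) + \Delta$ (e.g. $(\sqrt{V(0)}+\sqrt{\Delta})^2$ for $s_{\sqrt{\Delta}}^2(\sqrt{\cdot})$) and carries no information at all while $Q \le \Delta$. The resulting slack is an additive quantity of order $\Delta$ (plus $\gamma$), not a small perturbation: $\Delta$ is only lower-bounded by $\mu_2(\rho^{-1}(2B_{g_e}B_{\nabla Q}B_{\mathrm{approx}}))$ and can be large, while the strict inequality $R > \mu_1^{-1}(Q(e(0),0)+B_{\alpha_p}+B_{\alpha_m})$ provides no fixed margin, so it cannot absorb a $\Delta$-dependent loss. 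Concretely, your argument as written does not exclude the trajectory reaching $\norm{e}_2 = R$ while $Q(e,t)$ remains inside the deadzone (possible whenever $\mu_1(R) \le \Delta$), in which case $\sigma_\Delta(Q) \le V(0)$ says nothing. The paper closes this step differently: it takes the first exit time $T_0$ with $\norm{e(T_0)}_2 = R$ and derives the contradiction $V(0) \ge V(T_0) \ge \mu_1(\norm{e(T_0)}_2) = \mu_1(R) > \mu_1(\mu_1^{-1}(V(0))) = V(0)$, i.e. it lower-bounds the composite $V$ at the exit time by $\mu_1(\norm{e}_2)$ directly rather than by inverting the deadzone, so no $\Delta$-dependent slack enters. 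To complete your write-up you must either reproduce a lower bound of that form at the exit time or strengthen the hypothesis on $R$ to include the deadzone-inversion slack; the ``absorb a small $\gamma$ into the strict inequality'' step would fail as stated.
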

Theorem~\ref{thm:ac_finite_approx} can be used in conjunction with the results of Section~\ref{sec:rf} to obtain a high-probability guarantee for control, as illustrated by the following example.

\begin{ex}[Adaptive control with random features]
\label{ex:rf_adaptive}
Suppose for simplicity that $\bregdm{x}{y} = \frac{1}{2}\norm{x-y}_2^2$ is the Euclidean distance.
Fix a positive integer $K$,
and let $\delta \in (0, 1)$.
Assume that $h\in\calF_2(B_h)$ under 
Assumption~\ref{assmp:weights_biases}, 
and again for simplicity assume that 
the kernel is decomposable as in Section~\ref{sec:rf:examples}.
Set $B_{\alpha_m} = B_h^2/2$. Let $\{\theta_i\}_{i=1}^{K}$ be i.i.d.\ draws from $\nu$. Then, by Equation~\ref{eq:final_rf_bound},
with probability at least $1-\delta$
there exists $\alpha_m = (\alpha_{m,1}, ..., \alpha_{m,K}) \in \R^{K d_1}$ satisfying $\norm{\alpha_{m,i}}_2 \leq B_h/K$ for $i=1, ..., K$ and
\begin{align*}
    \sup_{\norm{x}_2 \leq B_x} \norm{ h(x) - \Psi\left(x; \{\theta_i\}_{i=1}^{K}\right) \alpha_m }_2 \leq \frac{C(h, \delta) (B_x \sqrt{n} + \sqrt{d_1})}{\sqrt{K}},
\end{align*}
with $\Psi(x; \{\theta_i\}_{i=1}^{K}) = \begin{bmatrix} \Phi(x, \theta_1), ...,\Phi(x, \theta_K) \end{bmatrix} \in \R^{d \times K d_1}$.
Here, $C(h, \delta) > 0$ is a constant that depends only on $h$ and $\delta$. 
Note that
\begin{align*}
    \bregdm{\alpha_m}{0} = \frac{1}{2}\norm{\alpha_m}_2^2 = \frac{1}{2}\sum_{i=1}^{K} \norm{\alpha_{m,i}}_2^2 \leq \sum_{i=1}^{K} \frac{B_h^2}{2K^2} = \frac{B_h^2}{2K} \leq B_{\alpha_m},
\end{align*}
so that $B_{\mathrm{approx}} \leq \frac{C(h, \delta) (B_x \sqrt{n} + \sqrt{d_1})}{\sqrt{K}}$.
Hence, to ensure $\limsup_{t\rightarrow\infty}\norm{e(t)}_2 \leq \varepsilon$ for some $\varepsilon > 0$, it suffices to take $K$ satisfying
\begin{align*}
    K \geq \frac{4 B_{g_e}^2 B_{\nabla Q}^2 C(h, \delta)^2 (B_x \sqrt{n} + \sqrt{d_1})^2}{\rho^2(\mu_2^{-1}(\mu_1(\varepsilon)))}.
\end{align*}
Suppose that
$\mu_1(x) = \mu x$, $\mu_2(x) = L x$, and $\rho(x) = \beta x$\footnote{For $V(t)$ a quadratic Lyapunov function certifying exponential stability, it is a simple calculation to show that one can take $Q(t) = \sqrt{V(t)}$ to obtain such linear functions for $\mu_1, \mu_2$ and $\rho$.}. Then this bound simplifies to
\begin{align*}
    K \geq \frac{4}{\beta^2 \varepsilon^2} \left(\frac{L}{\mu}\right)^2 B_{g_e}^2 B_{\nabla Q}^2 C(h, \delta)^2 (B_x \sqrt{n} + \sqrt{d_1})^2.
\end{align*}
\end{ex}

\paragraph{Approximation region} For simplicity of presentation, we have chosen the approximation region in Theorem~\ref{thm:ac_finite_approx}
large enough to cover the variation of the error signal throughout
adaptation. Alternatively, the approximation region can be specified \textit{a-priori}, and sliding mode control can be used to force the system to stay inside the approximation region. Such a formulation requires additional technical assumptions on the error dynamics.

\paragraph{Contraction} Assume that the error dynamics is contracting. Then we may take $Q(e, t)$ to be the Riemannian energy as in Remark~\ref{rmk:lyap_contr} and set $\psi_\ell(\cdot) = \frac{1}{2}\norm{\cdot}_2^2$
for $\ell \in \{p, m\}$ to recover the contraction metric-based adaptation law due to~\cite{brett_adapt}
\begin{equation*}
    \dot{\hat{\alpha}}_m = -\Psi(x)^\T g_e(x, t)^\T M(e, t)\gamma_s(e, 0, t).
\end{equation*}
Here, $\gamma_s(e, 0, t)$ denotes the tangent vector to a geodesic in the metric $M(e, t)$ between $e$ and the origin at the endpoint $e$ (a similar metric-based update also holds for $\hat{\alpha}_p$).

\paragraph{Mirror descent} By analogy to mirror descent, the choice of potential functions $\psi_p(\cdot)$ and $\psi_m(\cdot)$ can be used to regularize the learned physical and random feature models, or can be used to improve convergence when adapted to the problem geometry~\citep{boffi_neco_imp_reg}. The random sinusoidal features considered in Section~\ref{sec:rf} are uniformly bounded in $\ell_\infty$ norm independent of the number of parameters. This observation suggests that, for a large number of features, a potential function strongly convex with respect to the $\ell_1$ norm such as the hypentropy potential due to~\citet{pmlr-v117-ghai20a} may lead to improved performance.

\paragraph{Interpolation} We conclude our treatment of adaptive control by presenting an approximate version of Theorem~\ref{thm:interpolation}, which demonstrates
how the approximation error from finite-dimensional truncation
translates into an interpolation error for the learned dynamics approximation.
Specifically, if Theorem~\ref{thm:ac_finite_approx}
is invoked with a $(\Delta, L, B)$-admissible deadzone,
then the following result shows that the interpolation error is bounded by $O\left(\sqrt{\mu_1^{-1}(\Delta)(1+L)}\right)$. This motivates the construction in Example~\ref{prop:s_delta_gamma_deadzone}.

\begin{restatable}[Approximate interpolation]{mythm}{acapproxinterp}
\label{thm:ac_approx_interp}
Suppose the hypotheses of Theorem~\ref{thm:ac_finite_approx} hold.
Let $\sigma_\Delta$ denote a
$(\Delta, L, B)$-admissible deadzone,
and assume that $f_e$, $g_e$, and $Y$ are locally Lipschitz in 
their first arguments uniformly in $t$.
Then there exist constants $C_1 > 0$ and $C_2 > 0$ not depending on
$\Delta$ such that
\begin{align*}
    \limsup_{t \rightarrow \infty} \norm{g_e(x(t), t)(u(x(t), t) - Y(x(t), t) \alpha_p - h(x(t)))}_2 \leq C_1 \sqrt{\mu_1^{-1}(\Delta)(1+L)} + C_2 \mu_1^{-1}(\Delta).
\end{align*}
\end{restatable}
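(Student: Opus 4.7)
The starting point is to express the interpolation residual
$R(t) := g_e(x(t),t)(u(x(t),t) - Y(x(t),t)\alpha_p - h(x(t)))$
directly through the error dynamics \eqref{eq:error_dynamics} as $R(t) = \dot e(t) - f_e(e(t),t)$, so that
$\norm{R(t)}_2 \leq \norm{\dot e(t)}_2 + \norm{f_e(e(t),t)}_2.$
Since $f_e(0,t)=0$ and $f_e$ is locally Lipschitz in $e$ uniformly in $t$, on the bounded trajectory guaranteed by Theorem~\ref{thm:ac_finite_approx} there exists $L_{f_e}$ with $\norm{f_e(e,t)}_2 \leq L_{f_e}\norm{e}_2$. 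Combined with $\limsup_t\norm{e(t)}_2 \leq \mu_1^{-1}(\Delta)$, this contributes the $C_2\mu_1^{-1}(\Delta)$ summand with $C_2 = L_{f_e}$.

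The bulk of the proof is a quantitative bound on $\norm{\dot e(t)}_2$. I would reuse the Lyapunov function $V = \sigma_\Delta(Q(e,t)) + \bregdp{\alpha_p}{\hat\alpha_p} + \bregdm{\alpha_m}{\hat\alpha_m}$. Because $\Delta \geq \mu_2(\rho^{-1}(2B_{g_e}B_{\nabla Q}B_{\mathrm{approx}}))$, the analysis yields $\dot V \leq -\tfrac{1}{2}\sigma'_\Delta(Q(e,t))\rho(\norm{e}_2)$, so that $\int_0^\infty \sigma'_\Delta(Q(e(t),t))\rho(\norm{e(t)}_2)\,dt < \infty$ and also $\int_0^\infty \sigma'_\Delta(Q(e(t),t))\,dt < \infty$. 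Under the added uniform Lipschitz hypotheses, $\dot Q$ is bounded; combined with the $L$-Lipschitz property of $\sigma'_\Delta$, this renders $\sigma'_\Delta\circ Q$ uniformly continuous in $t$. Barbalat's lemma then delivers $\sigma'_\Delta(Q(e(t),t)) \to 0$, implying $\limsup_t (Q(e(t),t)-\Delta)_+ = 0$.

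The most delicate step is translating the qualitative convergence $(Q-\Delta)_+\to 0$ into a quantitative bound on $\norm{\dot e(t)}_2$. I would differentiate $R(t)$ along the trajectory: $\dot R$ decomposes into a state-driven component (bounded via the Lipschitz constants of $Y$, $\Psi$, $h$, and $g_e$ together with the bounded signals) and an adaptation-driven component proportional to the update rates $\tfrac{d}{dt}\nabla\psi_p(\hat\alpha_p)$ and $\tfrac{d}{dt}\nabla\psi_m(\hat\alpha_m)$, each of which scales with $\sigma'_\Delta(Q) \leq L(Q-\Delta)_+$. A short-time energy estimate on $\norm{R}^2$ over a sliding window, combined with a Cauchy--Schwarz argument and the control $(Q-\Delta)_+\to 0$, then yields $\limsup_t\norm{\dot e(t)}_2 \leq C_1\sqrt{\mu_1^{-1}(\Delta)(1+L)}$, where the factor $1+L$ cleanly separates a state-dependent residual from a parameter-oscillation residual at the deadzone boundary. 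The main obstacle is precisely this sharp square-root scaling: a naive triangle-inequality argument produces only $O(\mu_1^{-1}(\Delta))$, which is insufficient to recover Theorem~\ref{thm:interpolation} as $\Delta \to 0$ with $L$ bounded independently of $\Delta$ (as in Example~\ref{prop:s_delta_gamma_deadzone}). Achieving the square-root rate requires exploiting the coordinated smallness and Lipschitz regularity of $\sigma'_\Delta(Q)$ near the boundary of the deadzone, so that residual adaptation just outside contributes only $O(\sqrt{L\,\mu_1^{-1}(\Delta)})$ rather than $O(\mu_1^{-1}(\Delta))$.
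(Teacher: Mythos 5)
Your opening step is exactly the paper's: writing the residual $R(t) := g_e(x(t),t)\,(u(x(t),t) - Y(x(t),t)\alpha_p - h(x(t))) = \dot e(t) - f_e(e(t),t)$, and using $f_e(0,t)=0$ together with the uniform-in-$t$ Lipschitz hypothesis and $\limsup_{t\to\infty}\norm{e(t)}_2 \le \mu_1^{-1}(\Delta)$ to produce the $C_2\,\mu_1^{-1}(\Delta)$ term. The gap is in the central quantitative step, the bound $\limsup_{t\to\infty}\norm{\dot e(t)}_2 \le C_1\sqrt{\mu_1^{-1}(\Delta)(1+L)}$, which you assert but whose proposed mechanism does not deliver it. A ``short-time energy estimate on $\norm{R}^2$ over a sliding window'' has nothing to close against: $\int_0^\infty \norm{R(t)}_2^2\,dt$ is not finite in general, because inside the deadzone adaptation freezes while the residual (approximation error plus frozen parameter mismatch) persists at a nonzero level --- the theorem's conclusion is only that this level is asymptotically small, not integrable. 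Moreover, $(Q-\Delta)_+\to 0$ controls $\norm{e(t)}_2$, not $R(t)$, and the state-driven part of $\dot R$ is merely bounded, not small, so no Cauchy--Schwarz manipulation of these ingredients yields the square root. Your diagnosis of the difficulty is also backwards: a bound of order $\mu_1^{-1}(\Delta)$ on the residual would be \emph{stronger} than the claimed one for small $\Delta$; the real problem is that naive arguments only give $O(1)$ control of $\norm{\dot e}$.

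The missing idea is the paper's generalized Barbalat's lemma (Proposition~\ref{prop:generalized_barbalat}): if $\limsup_{t\to\infty}\abs{f(t)} \le \varepsilon$ and $f'$ is $\Lambda$-Lipschitz, then $\limsup_{t\to\infty}\abs{f'(t)} \le 2\sqrt{\varepsilon\Lambda}$. The paper applies this to the error signal itself, with $\varepsilon = \mu_1^{-1}(\Delta)$ coming from Theorem~\ref{thm:ac_finite_approx}, and with $\Lambda$ a uniform Lipschitz constant for $t\mapsto \dot e(t)$ that is affine in $L$. That Lipschitz constant is obtained by first showing $\dot x$, $\dot e$, $\dot{\hat\alpha}_p$, $\dot{\hat\alpha}_m$ are uniformly bounded (using the $B$-boundedness of $\sigma'_\Delta$ and Proposition~\ref{prop:bregman_lower_bound} for the parameter estimates), hence $x$, $e$, $\hat\alpha_p$, $\hat\alpha_m$ are uniformly Lipschitz in $t$, and then composing with the uniform-in-$t$ Lipschitz hypotheses on $f_e$, $g_e$, $Y$ (and on $\Psi$, $h$). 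This interpolation between the asymptotic smallness of $e$ and the regularity of $\dot e$ is precisely where $\sqrt{\mu_1^{-1}(\Delta)(1+L)}$ comes from; it has nothing to do with ``residual adaptation just outside the deadzone boundary.'' Your sliding-window intuition is close in spirit to the proof of Proposition~\ref{prop:generalized_barbalat}, but it must be applied to $e$ (whose asymptotic smallness you have) rather than to $R$ (whose smallness is what you are trying to prove); as written, the argument does not close.
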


\subsection{Adaptive prediction}
Similar to Theorem~\ref{thm:ac_finite_approx}, the following theorem designs a predictor by leveraging the ability to uniformly approximate the unknown dynamics to a suitable degree of accuracy.

\begin{restatable}[Adaptive prediction with uniform approximation]{mythm}{dpfiniteapprox}
\label{thm:dp_finite_approx}
Suppose that the trajectory $x(t)$ of the system $\dot{x} = f(x, t)$ 
is uniformly bounded.
Choose a continuous and locally Lipschitz $k(\hat{x}, x)$ such that $f(\hat{x}, t) + k(\hat{x}, x(t))$ is
contracting in a metric $M : \R^n \times \R_{\geq 0} \rightarrow \Symm^{n \times n}_{\geq 0}$ with rate $\lambda > 0$,
and suppose that the metric $M$ satisfies $\mu I \preccurlyeq M(\hat{x}, t) \preccurlyeq L I$ for all $\hat{x}$ and $t$.
Let $\gamma(\cdot; \hat{x}, x, t):[0, 1]\rightarrow\R^n$ denote a geodesic between $\hat{x}$ and $x$ in the metric $M(\hat{x}, t)$, and let $\gamma_s(s; \hat{x}, x, t)$ denote the derivative of $s \mapsto \gamma(s; \hat{x}, x, t)$. 
Suppose that the map 
$(\hat{x}, t) \mapsto \norm{\gamma_s(0; \hat{x}, x(t), t)}_2$
is locally bounded in $\hat{x}$ uniformly in $t$.
Fix any $B_{\alpha_p} > 0$ satisfying $\bregdp{\alpha_p}{\alpha_{p,0}} \leq B_{\alpha_p}$,
any $B_{\alpha_m} > 0$, and 
any $R$ satisfying
\begin{align*}
    R > \sqrt{\frac{Q(\hat{x}(0), 0) + B_{\alpha_p} + B_{\alpha_m}}{\mu}}, \:\: Q(\hat{x}, t) := E_{M(\cdot, t)}(\hat{x}, x(t)).
\end{align*}
Let $\Psi : \R^n \rightarrow \R^{d \times m}$ be a locally Lipschitz feature map. 
Define the following constants
\begin{align*}
    B_x &:= \sup_{t \geq 0} \norm{x(t)}_2, \\
    B_{\hat{x}} &:= R + B_x, \\
    B_\gamma &:= \sup_{t \geq 0} \sup_{\norm{\hat{x}}_2 \leq B_{\hat{x}}} \norm{\gamma_s(0; \hat{x}, x(t), t)}_2, \\
    B_{\mathrm{approx}} &:= \inf_{\bregdm{\alpha_m}{\alpha_{m,0}} \leq B_{\alpha_m}}  \sup_{\norm{\hat{x}}_2 \leq B_{\hat{x}}} \norm{\Psi(\hat{x})\alpha_m - h(\hat{x})}_2.
\end{align*}
Choose any $\Delta$ satisfying $\Delta \geq \frac{L^2 B_\gamma B_{\mathrm{approx}}}{\lambda \mu}$,
and let $\sigma_\Delta$ be a $\Delta$-admissible deadzone.
Then the dynamical system
\begin{align*}
    \dot{\hat{x}} &= \hat{f}(\hat{x}, \hat{\alpha}_p, \hat{\alpha}_m, t) + k(\hat{x}, x(t)), \\
    \hat{f}(\hat{x}, \hat{\alpha}_p, \hat{\alpha}_m, t) &= Y(\hat{x}, t)\hat{\alpha}_p + \Psi(\hat{x}) \hat{\alpha}_m, \\
        \frac{d}{dt} \nabla \psi_p(\hat{\alpha}_p) &= -\sigma'_\Delta(Q(\hat{x}, t)) Y(\hat{x}, t)^\T \nabla Q(\hat{x}, t), \\
    \frac{d}{dt} \nabla \psi_m(\hat{\alpha}_m) &= - \sigma'_\Delta(Q(\hat{x}, t)) \Psi(\hat{x})^\T \nabla Q(\hat{x}, t),
\end{align*}
with initial conditions $\hat{x}(0) = \hat{x}_0$, 
$\hat{\alpha}_p(0) = \alpha_{p,0}$, and $\hat{\alpha}_m(0) = \alpha_{m,0}$ has a solution that exists for all $t \geq 0$. Furthermore, 
\begin{align*}
    \limsup_{t \rightarrow \infty} \norm{\hat{x}(t) - x(t)}_2 \leq \sqrt{\frac{\Delta}{\mu}}.
\end{align*}
\end{restatable}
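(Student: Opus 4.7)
I would mirror the composite Lyapunov argument used for \Cref{thm:ac_finite_approx}, exploiting the control/prediction duality the authors describe. First, pick an ``ideal'' model parameter $\alpha_m^{\star} \in O_m$ that attains the infimum defining $B_{\mathrm{approx}}$, so that simultaneously $\bregdm{\alpha_m^{\star}}{\alpha_{m,0}} \leq B_{\alpha_m}$ and $\sup_{\|\hat{x}\|_2 \leq B_{\hat{x}}} \|\Psi(\hat{x})\alpha_m^{\star} - h(\hat{x})\|_2 \leq B_{\mathrm{approx}}$. Take the Riemannian energy $Q(\hat{x}, t) = E_{M(\cdot, t)}(\hat{x}, x(t))$ as in \Cref{rmk:lyap_contr} and define the composite candidate
\[
V(t) := \sigma_\Delta(Q(\hat{x}(t), t)) + \bregdp{\alpha_p}{\hat{\alpha}_p(t)} + \bregdm{\alpha_m^{\star}}{\hat{\alpha}_m(t)}.
\]
Since admissible deadzones satisfy $\sigma_\Delta(Q) \leq Q$, one gets $V(0) \leq Q(\hat{x}(0),0) + B_{\alpha_p} + B_{\alpha_m}$, which combined with the prescribed lower bound on $R$ and $\mu I \preccurlyeq M$ certifies $\|\hat{x}(0)\|_2 \leq B_{\hat{x}}$.

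\paragraph{Computing $\dot V$ and the key cancellation.} Decompose the closed loop as $\dot{\hat{x}} = \bigl[f(\hat{x},t) + k(\hat{x}, x(t))\bigr] + Y(\hat{x},t)(\hat{\alpha}_p - \alpha_p) + \Psi(\hat{x})(\hat{\alpha}_m - \alpha_m^{\star}) + \bigl(\Psi(\hat{x})\alpha_m^{\star} - h(\hat{x})\bigr)$. The bracketed term is precisely the contracting nominal estimator, so \Cref{rmk:lyap_contr} yields
\[
\dot Q \leq -2\lambda Q + \ip{\nabla_{\hat{x}} Q}{Y(\hat{x},t)(\hat{\alpha}_p - \alpha_p) + \Psi(\hat{x})(\hat{\alpha}_m - \alpha_m^{\star}) + \Psi(\hat{x})\alpha_m^{\star} - h(\hat{x})},
\]
where the first variation of the Riemannian energy gives $\nabla_{\hat{x}} Q = -2M(\hat{x}, t)\gamma_s(0; \hat{x}, x(t), t)$. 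Using the mirror-descent identity $\frac{d}{dt}\bregd{\alpha}{\hat{\alpha}(t)} = -(\alpha - \hat{\alpha}(t))^\T \frac{d}{dt}\nabla\psi(\hat{\alpha}(t))$ together with the prescribed update laws for $\hat{\alpha}_p$ and $\hat{\alpha}_m$, the two parameter-error inner products appearing in $\sigma'_\Delta(Q)\dot Q$ cancel exactly with the corresponding Bregman derivatives, leaving
\[
\dot V \leq \sigma'_\Delta(Q)\bigl(-2\lambda Q + \ip{\nabla_{\hat{x}} Q}{\Psi(\hat{x})\alpha_m^{\star} - h(\hat{x})}\bigr) \leq \sigma'_\Delta(Q)\bigl(-2\lambda Q + C\,B_{\mathrm{approx}}\bigr),
\]
where $C$ is a constant of order $L B_\gamma/\mu$ coming from bounding $\|\nabla_{\hat{x}} Q\|_2$ on the approximation region. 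The hypothesis $\Delta \geq L^2 B_\gamma B_{\mathrm{approx}}/(\lambda \mu)$ makes the right-hand bracket nonpositive on $\{Q \geq \Delta\}$; combined with $\sigma'_\Delta \equiv 0$ on $\{Q \leq \Delta\}$, this yields $\dot V \leq 0$ globally.

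\paragraph{Bootstrap, existence, and convergence.} Non-increase of $V$ bounds $\sigma_\Delta(Q)$, $\bregdp{\alpha_p}{\hat{\alpha}_p}$, and $\bregdm{\alpha_m^{\star}}{\hat{\alpha}_m}$, hence also $Q$ (using coercivity of $\sigma_\Delta$ past $\Delta$) and the parameter trajectories. The bound $\|\hat{x} - x\|_2 \leq \sqrt{Q/\mu}$ then keeps $\hat{x}(t)$ inside $\{\|\hat{x}\|_2 \leq B_{\hat{x}}\}$ for all $t$, retroactively justifying the approximation bound used to derive $\dot V \leq 0$. Local Lipschitzness of all vector fields plus these a-priori bounds extend the local solution to $[0,\infty)$. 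For the $\limsup$ claim, integrating the dissipation inequality gives $\int_0^\infty \sigma'_\Delta(Q(t))\bigl(2\lambda Q(t) - C B_{\mathrm{approx}}\bigr)\,dt < \infty$; uniform continuity of the integrand (from Lipschitzness of $\sigma'_\Delta$ and boundedness of all closed-loop signals) lets Barbalat's lemma force $Q(t)$ asymptotically into $\{Q \leq \Delta\}$, and $\mu I \preccurlyeq M$ converts this to $\|\hat{x}(t) - x(t)\|_2 \leq \sqrt{\Delta/\mu}$ in the limit.

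\paragraph{Main obstacle.} The principal subtlety is the \emph{bootstrap} coupling between the Lyapunov argument and the uniform approximation bound: $B_{\mathrm{approx}}$ is valid only on $\{\|\hat{x}\|_2 \leq B_{\hat{x}}\}$, but that containment is precisely what the Lyapunov inequality is meant to deliver. Closing the loop cleanly requires using the bound $V(0) \leq Q(\hat{x}(0),0) + B_{\alpha_p} + B_{\alpha_m}$, the prescribed lower bound on $R$, and the metric inequality $\mu I \preccurlyeq M$ in concert, plus a continuity/exit-time argument on $\{\|\hat{x}\|_2 < B_{\hat{x}}\}$ to rule out finite-time escape. The analogous bootstrap in the proof of \Cref{thm:ac_finite_approx} provides the natural template, consistent with the LQR/Kalman-style control/prediction duality the authors highlight.
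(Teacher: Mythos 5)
Your plan is sound, but it takes a different organizational route than the paper: rather than re-running the composite Lyapunov/deadzone argument from scratch in the prediction variables, the paper proves \Cref{thm:dp_finite_approx} purely by reduction to \Cref{thm:ac_finite_approx}. It writes the predictor as a matched-uncertainty control problem with error $e = \hat{x} - x$, $g_e(x,t) = I$, and nominal error field $f_e(e,t) = f(e + x(t), t) + k(e + x(t), x(t)) - f(x(t),t)$; checks (using boundedness of $x(t)$ and $k(x,x)=0$) that $f_e$ is locally Lipschitz and locally bounded uniformly in $t$ and contracting in $M_e(e,t) = M(e+x(t),t)$, so that the Riemannian energy $Q$ satisfies $\ip{\nabla Q}{f_e} + \frac{\partial Q}{\partial t} \leq -2\lambda Q$ with $\mu\norm{e}_2^2 \leq Q \leq L\norm{e}_2^2$ and $\norm{\nabla Q}_2 \leq L B_\gamma$ (via $\nabla Q = M_e\,\gamma_s(0;\cdot)$, with constant-speed geodesics handling $\frac{\partial Q}{\partial t}$); and it then invokes the control theorem with $C_e = 1$, $B_{g_e} = 1$, $B_{\nabla Q} = L B_\gamma$, $\rho(r) = 2\lambda\mu r^2$, $\mu_1(r) = \mu r^2$, $\mu_2(r) = L r^2$, for which the deadzone condition $\Delta \geq \mu_2(\rho^{-1}(2 B_{g_e} B_{\nabla Q} B_{\mathrm{approx}}))$ becomes exactly $\Delta \geq L^2 B_\gamma B_{\mathrm{approx}}/(\lambda\mu)$ and the conclusion is $\mu_1^{-1}(\Delta) = \sqrt{\Delta/\mu}$. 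The reduction buys existence/uniqueness, the exit-time bootstrap, and the Barbalat step for free; your direct route has to redo them, and as written it leans on two properties that \Cref{def:dead} does not guarantee, namely $\sigma_\Delta(Q) \leq Q$ and ``coercivity of $\sigma_\Delta$ past $\Delta$'' (an admissible deadzone need only have a nonnegative, locally Lipschitz derivative vanishing on $[0,\Delta]$, so it may grow superlinearly or saturate); the containment of $\hat{x}$ in the approximation region should run entirely through the exit-time argument with the radius $R$ and $\mu_1(r) = \mu r^2$, as in the proof of the control theorem and as you gesture at in your final paragraph. You would also need to verify explicitly the regularity items the reduction checks, in particular that $f_e$, $\nabla Q$, and $\frac{\partial Q}{\partial t}$ are locally bounded uniformly in $t$. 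Modulo these bookkeeping points, your decomposition, cancellation, threshold choice, and Barbalat conclusion match the mathematics the paper delegates to \Cref{thm:ac_finite_approx}.
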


\paragraph{Constructing a metric}
\Cref{thm:dp_finite_approx} requires a metric $M(\hat{x}, t)$
such that $\bar{f}(\hat{x}, t) := f(\hat{x}, t) + k(\hat{x}, x(t))$
is contracting. One such metric can always be obtained by taking $k(\hat{x}, x) = -\zeta (\hat{x} - x)$, in which case $\frac{\partial \bar{f}}{\partial \hat{x}}(\hat{x}, t) = \frac{\partial f}{\partial \hat{x}}(\hat{x}, t) - \zeta I$. If we further assume that $\frac{\partial f}{\partial \hat{x}}$ is locally bounded in $\hat{x}$ uniformly in $t$ and that $x(t)$ is uniformly bounded, then there exists a finite $\zeta \in (0, \infty)$ such that
$\bar{f}$ is contracting in the identity metric $M(\hat{x}, t) = I$.
In the case where $f$ is known, $k$ can be tailored to the system physics to obtain improved convergence~\citep{chung2009cooperative}.

\paragraph{Duality} The proof of Theorem~\ref{thm:dp_finite_approx} highlights a duality between the nonlinear adaptive control and nonlinear adaptive prediction problems reminiscent of the duality between LQR and Kalman filtering in linear control theory. Intuitively, any model capable of predicting the time evolution of a system could be used to control the system. Conversely, a model that can be used to control a system could instead be used to predict its evolution.

\paragraph{Interpolation} Theorem~\ref{thm:dp_finite_approx} assumes that the true system state $x(t)$ is measured continuously and concludes that the learned prediction $\hat{x}(t)$ will asymptotically become consistent with the observed measurements up to a level specified by the accuracy of the uniform approximation. Applying duality, the interpolation result in Theorem~\ref{thm:ac_approx_interp} shows that the learned model $\hat{f}(\hat{x}, \hat{\alpha}_p, \hat{\alpha}_m, t)$ becomes approximately consistent with the true model along the trajectory $x(t)$. 

\paragraph{Discrete sampling} In practical applications, measurements of the true system state are obtained at discrete instants, and an open-loop predictor with fixed parameters is used to extrapolate beyond them. The parameters are then updated according to a discretized adaptation law when measurements are received. In 
Appendix~\ref{app:sample}, we demonstrate how the nominal contraction properties required by Theorem~\ref{thm:dp_finite_approx} can be preserved with discrete measurements by taking the feedback term $k(\hat{x}, x)$ to have a sufficiently high contraction rate in comparison to the spacing between measurements $\Delta t$.
\section{Simulations}
\label{sec:simulations}
We now study the empirical performance of the nonparametric method and its randomized approximation. In the control setting we directly compare the kernel and approximate inputs. In prediction we illustrate the ability of the random feature approximation to scale to high-dimensional systems. In addition, we study the convergence of the prediction and interpolation errors as a function of $K$.

\subsection{Adaptive control}
\label{sec:exp:control}
Here we consider a synthetic example in adaptive control to compare the nonparametric adaptive input to its randomized approximation. 

\paragraph{System dynamics} We study the stable linear time-invariant system
\begin{align}
    \dot{x} = A \left(x - \frac{3}{2}\ind\right) + u(x, t) - h(x), \:\: x \in \R^5, \:\: h(x) = \sin(x) \mathrm{erf}(x), \label{eq:adaptive_control_exp}
\end{align}
where $A$ is a known matrix
with eigenvalues lying entirely in the left half-plane and $\mathbf{1}$ denotes the vector of ones. The operations defining $h$ are applied elementwise to each coordinate.
The error signal is set to $e(t) = x(t) - \frac{3}{2}\mathbf{1}$, and the desired trajectory is constant at the nominal equilibrium point
$x_d(t) = \frac{3}{2}\mathbf{1}$. This system admits a Lyapunov function $Q(x, t) = \frac{1}{2} (x-x_d(t))^\T P (x-x_d(t))$,
where $P$ is the unique positive definite solution to the Lyapunov matrix equation
$A^\T P + P A = - I$.

\paragraph{Implementation}
We apply a nonparametric input generated by the Gaussian kernel
\begin{align*}
    \kernel(x, y) = \exp\left(-\frac{\norm{x-y}_2^2}{2\sigma^2}\right) I, \:\: \sigma = 0.1.
\end{align*}
For its randomized approximation, we use the 
random Fourier features described in Section~\ref{sec:rf:examples}. Both the randomized and nonparametric adaptive laws are obtained by forward Euler integration with a fixed timestep
$\Delta t = 0.001$. At each time, the kernel input \eqref{eqn:kernel_input}
is evaluated via a Riemann sum approximation at the same resolution,
\begin{align*}
    u(x, t) = \int_0^t K(x, x(\tau))c(\tau)d\tau \approx \sum_{i=0}^{n_t}K(x, x(t_i))c(t_i)\Delta t
\end{align*}
with $n_t = t/\Delta t$. This corresponds to solving the pointwise-decoupled partial differential equation
\begin{equation*}
    \frac{\partial u}{\partial t}(x, t) = K(x, x(t))c(t)
\end{equation*}
again via forward Euler integration with a timestep $\Delta t$.

\begin{figure}[!t]
    \centering
    \begin{tabular}{ll}
        \begin{overpic}[width=.475\textwidth]{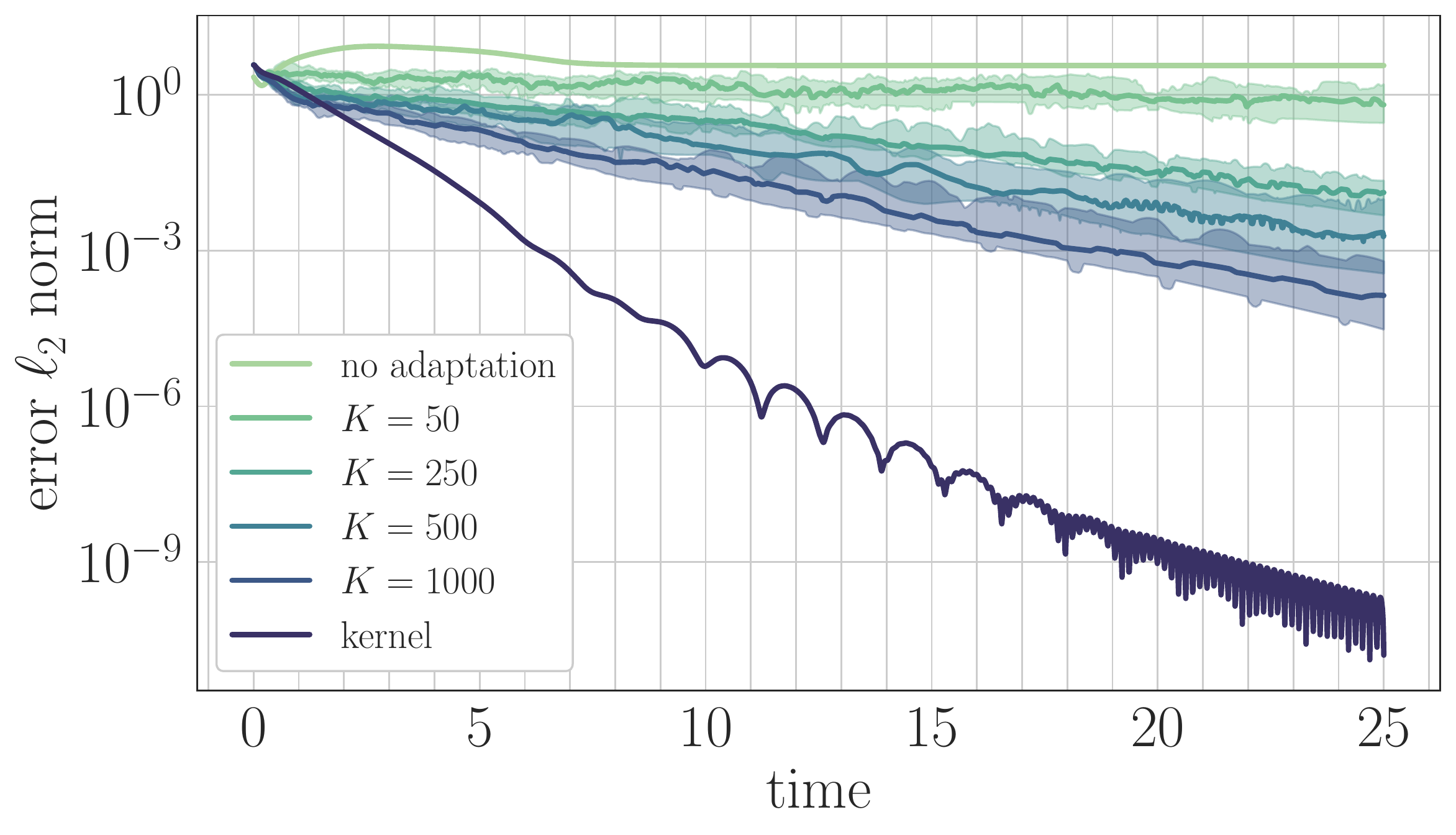}%
        \put(5, 60){\textbf{A}}
        \end{overpic}&  
        \begin{overpic}[width=.475\textwidth]{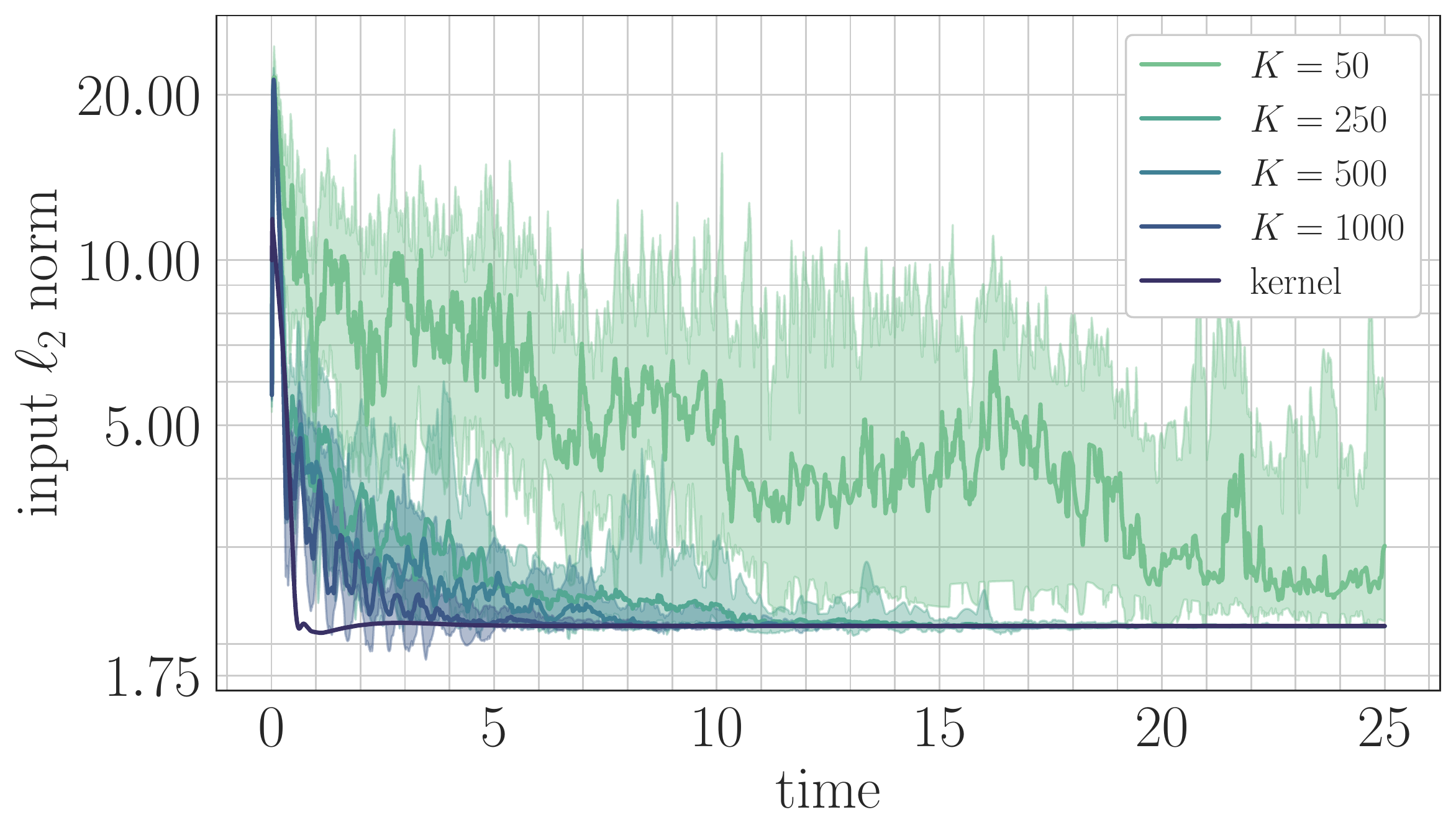}%
        \put(5, 60){\textbf{B}}
        \end{overpic}\\\\
        \multicolumn{2}{c}{%
        \begin{overpic}[width=.475\textwidth]{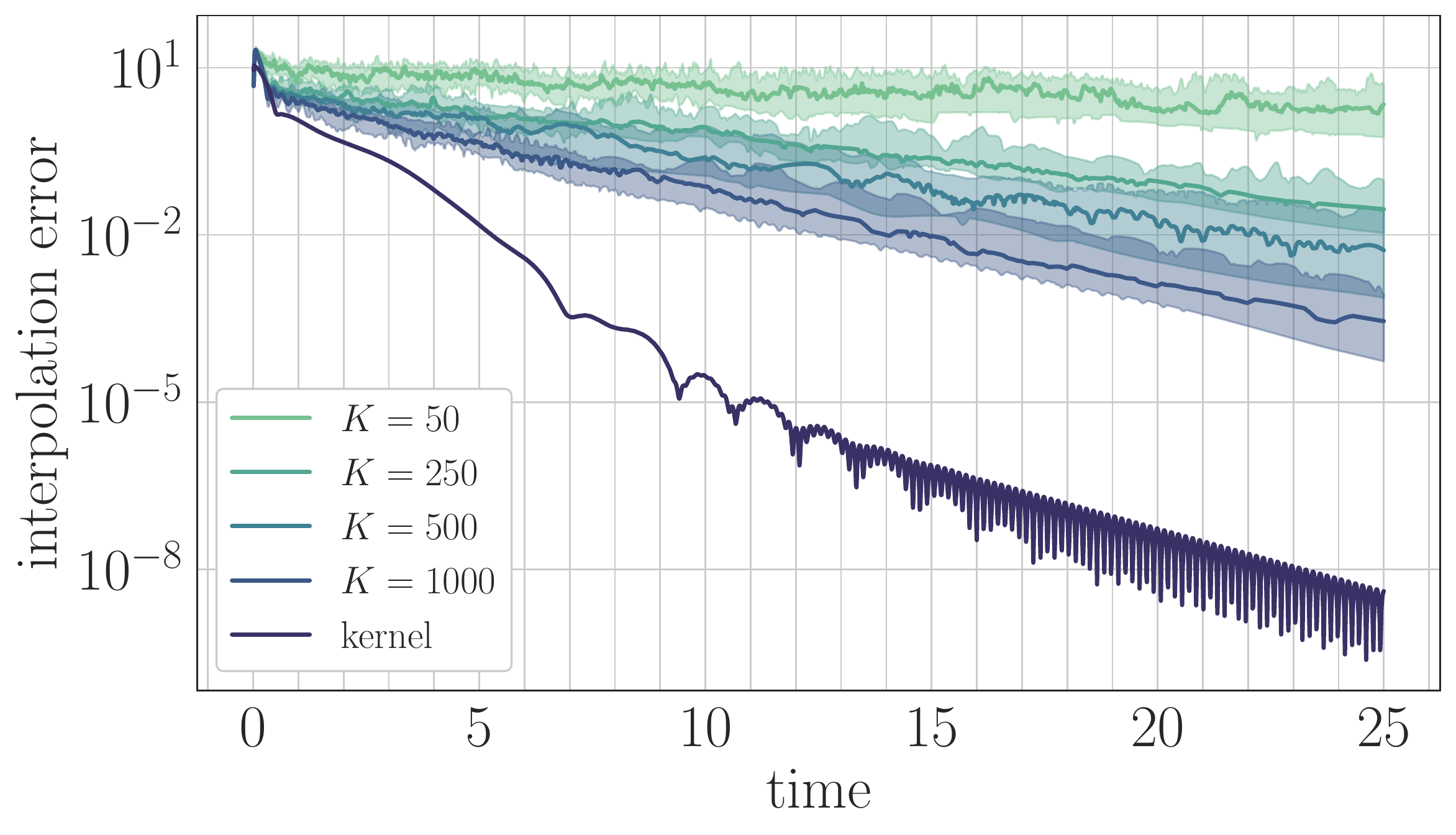}%
        \put(5, 60){\textbf{C}}
        \end{overpic}}        
    \end{tabular}
    \caption{\textbf{Adaptive control.} (A) Tracking error as a function of time. Error bars display the $20\%/80\%$ quantiles over $20$ trials (draws of the $\theta_i$) for each choice of $K$. Solid lines display the median. The tracking error decreases monotonically with the number of features, and the kernel input obtains the best performance by several orders of magnitude. (B) Magnitude of the adaptive input over time. The kernel input obtains the best performance despite using the lowest input magnitude. (C) Interpolation error as a function of time. Similar to the tracking error, the interpolation error decreases monotonically with increasing $K$, and the kernel input obtains the best performance by several orders of magnitude.}
    \label{fig:control}
\end{figure}

\paragraph{Results (Figure~\ref{fig:control})}%
Error bars around the random feature curves display the $20\%$ and $80\%$ quantiles, while the solid central curves display the corresponding median. In comparison to each value of $K$, the kernel input obtains the best tracking performance both transiently and asymptotically by several orders of magnitude. The tracking error at each fixed time decreases monotonically as a function of $K$ (Figure~\ref{fig:control}A).
The overall magnitude of the adaptive control input $\norm{u(x, t)}_2$ decreases monotonically as a function of $K$, and the kernel input consistently applies the lowest magnitude input despite obtaining the best performance (Figure~\ref{fig:control}B).
Similar to the tracking error, the kernel input obtains the best dynamics approximation by several orders of magnitude, and the dynamics interpolation error decreases monotonically as a function of $K$ for each fixed time (Figure~\ref{fig:control}C).

\subsection{Adaptive prediction}
The infinite-dimensional input considered in Theorem~\ref{thm:nonparametric_conv} enjoys guarantees that are independent of the system dimension. As shown by \eqref{eq:final_rf_bound}, the accuracy of the random feature approximation only depends polynomially on the system dimension. These observations suggest that the nonparametric input and its randomized approximations should scale well to high-dimensional systems.

\paragraph{Failures of uniform gridding} 
Any gridding-based approach must depend \textit{exponentially} on the system dimension, and as a consequence suffers from the curse of dimensionality. 
Modern robotic systems, for instance, often have state dimension in the twenties or thirties, which renders such approaches inapplicable for robotic control. 
For illustration, consider a uniform gridding method as suggested by the calculations in~\cite{sanner_nn}. For 
a nine-dimensional system, placing only ten basis functions in each direction would require one billion total basis functions, a computationally and statistically intractable number. 
Here, we study the efficiency of our randomized method in forming a predictive model of a sixty-dimensional system, and find that our randomized approach leads to good accuracy.

\paragraph{$m$-body system} Consider a system of $m$ point masses interacting via Newtonian gravitation in $d$ dimensions, and denote by $q_i\in\R^d$ the position of mass $i$ and $p_i\in\R^d$ the momentum of mass $i$. Assuming equal masses, such a system admits a Hamiltonian in non-dimensionalized units
\begin{equation*}
    H\left(\left\{p_i\right\}_{i=1}^m, \left\{q_i\right\}_{i=1}^m\right) = \sum_{i=1}^m\frac{\norm{p_i}_2^2}{2} - \sum_{i < j}^m\frac{1}{\norm{q_i - q_j}_2},
\end{equation*}
and a corresponding symplectic dynamics $\dot{q}_i = \frac{\partial H}{\partial p_i}$, $\dot{p}_i = -\frac{\partial H}{\partial q_i}$. We denote by $x = (q^\T, p^\T)^\T \in \R^{2md}$ with $q \in \R^{md}$ and $p \in \R^{md}$ vectors containing the stacked $q_i$ and $p_i$ over $i$.

\begin{figure}[!t]
    \centering
    \begin{tabular}{cc}
        \begin{overpic}[width=.475\textwidth]{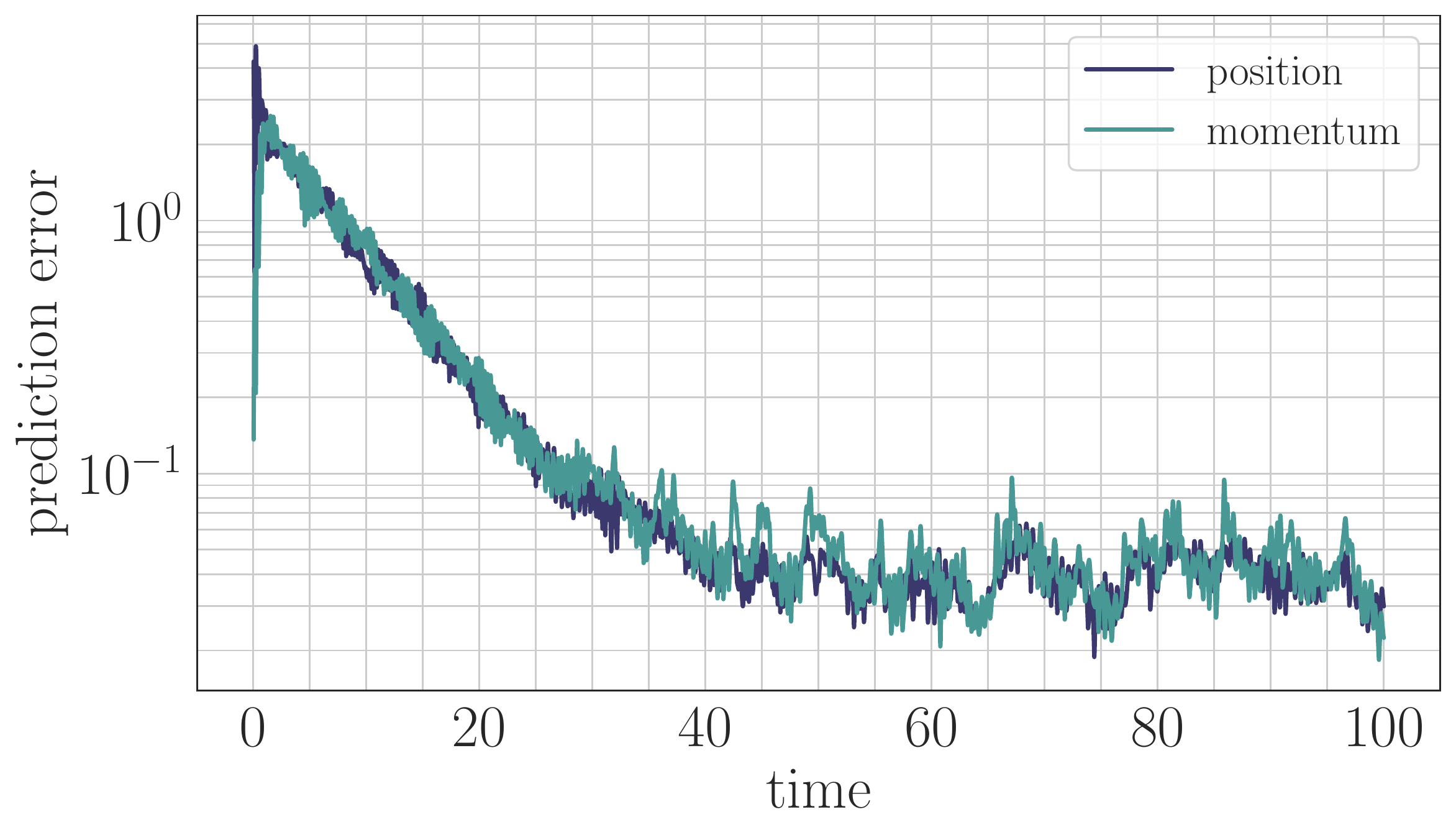}%
        \put(5, 55){\textbf{A}}
        \end{overpic}&  
        \begin{overpic}[width=.475\textwidth]{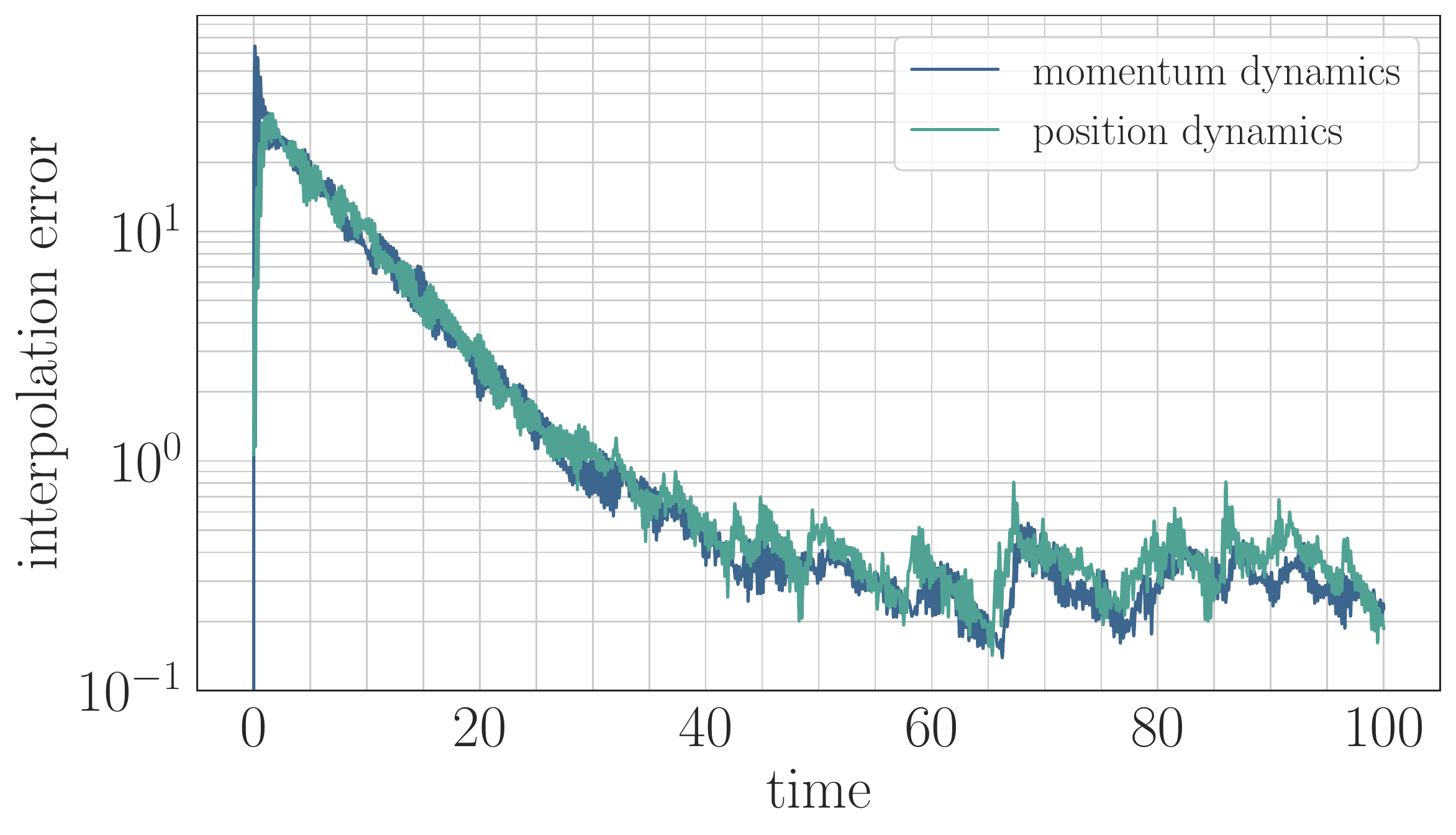}%
        \put(5, 55){\textbf{B}}
        \end{overpic}\\
        \begin{overpic}[width=.442\textwidth]{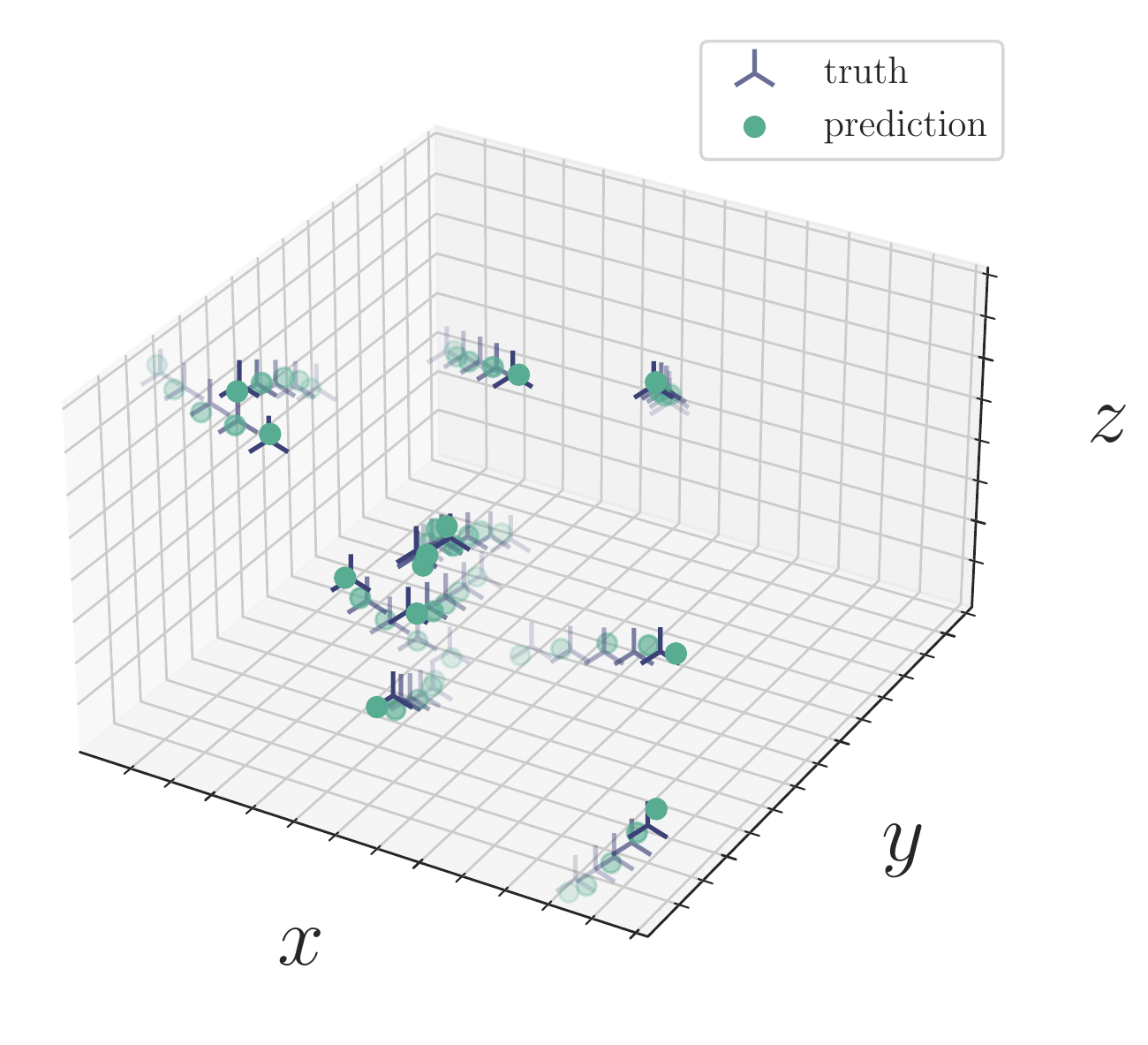}%
        \put(5, 75){\textbf{C}}
        \end{overpic}&  
        \begin{overpic}[width=.442\textwidth]{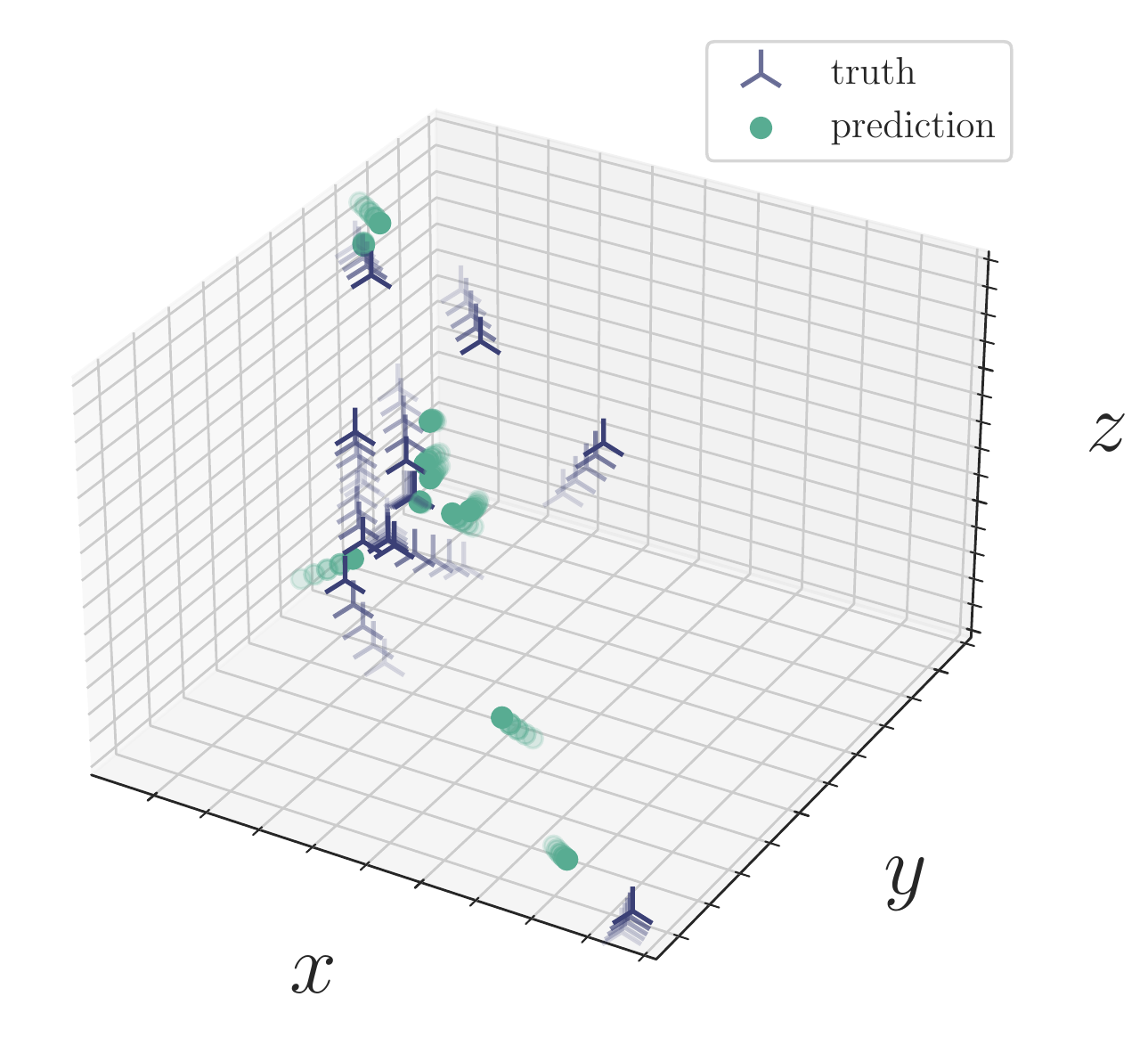}%
        \put(5, 75){\textbf{D}}
        \end{overpic}\\\\
        \multicolumn{2}{c}{%
        \begin{overpic}[width=.475\textwidth]{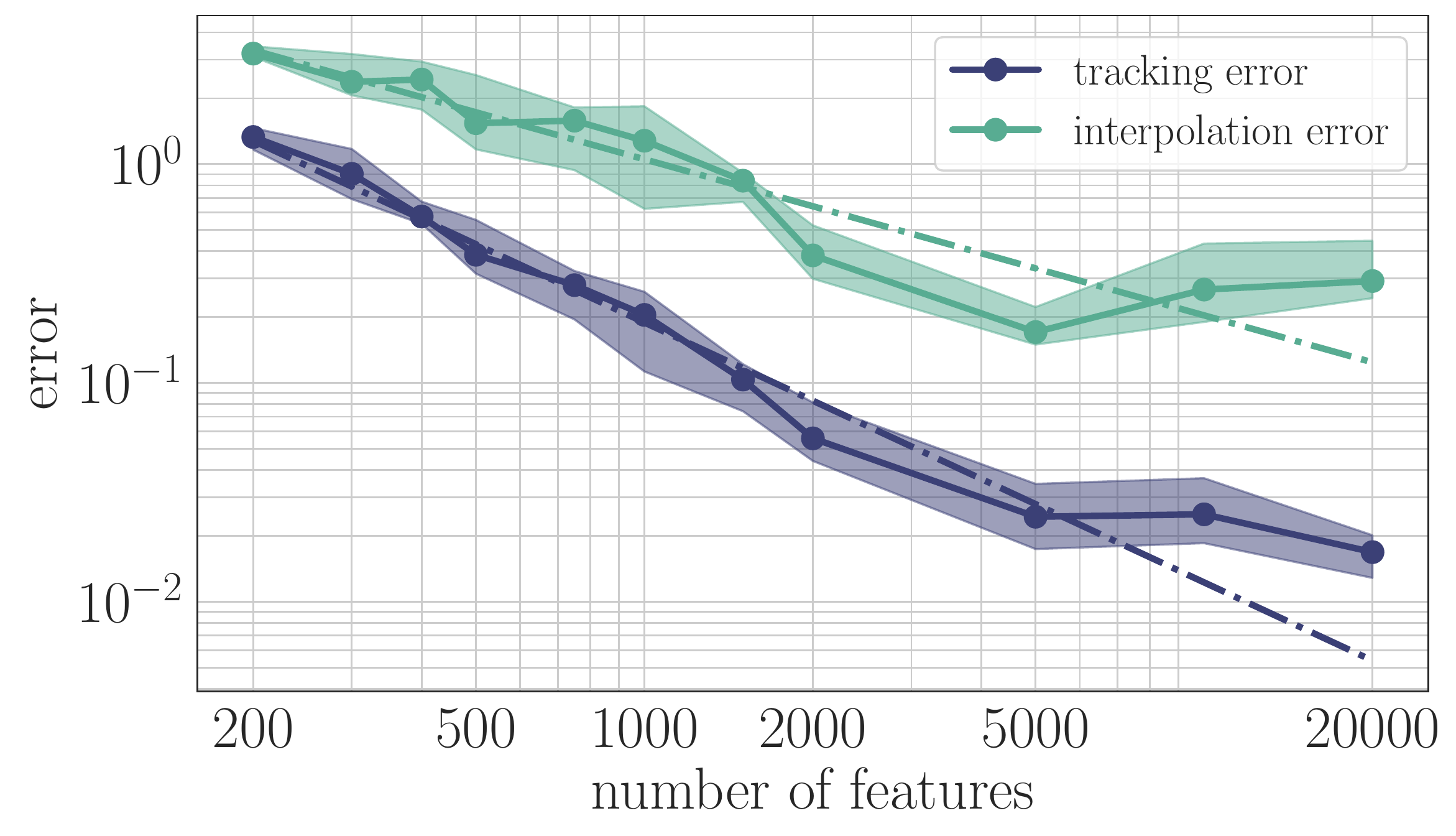}%
        \put(5, 60){\textbf{E}}
        \end{overpic}}
    \end{tabular}
    \caption{\textbf{Adaptive prediction.} All adaptive trajectories use $K=2500$ unless otherwise stated.
    (A/B) Prediction error (A) and dynamics interpolation error (B) over time. Both errors smoothly decreases to a ball around zero.
    (C/D) Example position prediction trajectory for the particles with learning (C) and without learning (D). Low-opacity fade denotes particle trajectories in time. The learned system accurately predicts the ground truth, while the system without learning fails to accurately capture the particle motion.
    (E) Prediction and interpolation errors at $t=100$ as a function of the number of features $K$ (solid: median, error bars: 20\% / 80\% quantiles over $10$ trials per $K$ value). As $K$ increases, the asymptotic prediction error decreases as a power law $\sim K^{-\xi}$, and the interpolation error decreases as a distinct power law $\sim K^{-\zeta}$. Best-fit power laws obtained via nonlinear least squares are shown in dashed with $\xi \approx 1.28 \pm 0.03$ and $\zeta \approx 0.77 \pm 0.03$.}
    \label{fig:prediction}
\end{figure}

\paragraph{Hamiltonian estimation} Let $\hat{q}_i \in \R^d$ and $\hat{p}_i \in \R^d$ denote estimates of the coordinates and momenta of the masses. Similar to the original offline method developed in~\cite{bottou_hamiltonian} and the online approach due to~\cite{boffi_neco_imp_reg}, consider learning a model of the Hamiltonian $\widehat{H}\left(\left\{\hat{p}_i\right\}_{i=1}^m, \left\{\hat{q}_i\right\}_{i=1}^m, t\right)$ by evolving the state estimates according to
\begin{align*}
    \dot{\hat{q}}_i &= \frac{\partial \widehat{H}}{\partial \hat{p}_i}\left(\left\{\hat{p}_i\right\}_{i=1}^m, \left\{\hat{q}_i\right\}_{i=1}^m, t\right) + k\cdot\left(q_i(t) - \hat{q}_i\right),\\
    \dot{\hat{p}}_i &= -\frac{\partial \widehat{H}}{\partial \hat{q}_i}\left(\left\{\hat{p}_i\right\}_{i=1}^m, \left\{\hat{q}_i\right\}_{i=1}^m, t\right) + k\cdot\left(p_i(t) - \hat{p}_i\right),
\end{align*}
where $k > 0$ denotes a measurement gain and where $q_i(t)$ and $p_i(t)$ denote measurements of the true system state. The error signals $\tilde{q}(t) = \hat{q}_i(t) - q_i(t)$ and $\tilde{p}(t) = \hat{p}_i(t) - p_i(t)$ can be used to update the Hamiltonian estimate $\widehat{H}$ until $\hat{q}_i(t)$ and $\hat{p}_i(t)$ become consistent with $q_i(t)$ and $p_i(t)$.

\paragraph{Symplectic kernel} Define the symplectic matrix 
\begin{equation*}
    J = \begin{pmatrix} 0 & I \\ -I & 0\end{pmatrix}, \:\:\: \text{so that} \: \: \: \dot{x} = J\nabla_x H(x)
\end{equation*} 
and let $\hat{x} = (\hat{q}^\T, \hat{p}^\T)^\T$ with $\hat{q}\in\R^{md}$ and $\hat{p}\in\R^{md}$ the stacked vectors of $\hat{q}_i$ and $\hat{p}_i$ over $i$. We search for the Hamiltonian estimate $\widehat{H}$ over an RKHS $\calH_{\mathsf{k}}$ corresponding to a scalar-valued translation-invariant kernel $\mathsf{k}:\R\rightarrow\R$. Similar to the curl-free kernel seen in Section~\ref{sec:rf:examples}, we define the \textit{symplectic kernel}
\begin{equation}
    \label{eqn:ham_kernel}
   \kernel(x, y) = -J\nabla^2 \mathsf{k}(x-y)J^\T,
\end{equation}
which describes the RKHS corresponding to the dynamics $J\nabla_{\hat{x}} \widehat{H}(\hat{x})$ for $\widehat{H}\in\calH_{\mathsf{k}}$. Taking $\mathsf{k}(\cdot)$ to be the Gaussian kernel, we may write \eqref{eqn:ham_kernel} as
\begin{equation*}
   \kernel(x, y) = -J\E[ww^\T\cos(w^\T x + b)\cos(w^\T x + b)]J^\T
\end{equation*}
with the expectation taken over $w \sim \mathsf{N}(0, \sigma_w^2 I)$ and  $b\sim\mathsf{Unif}(0, 2\pi)$. Let $\Psi:\R^{2md} \rightarrow \R^K$ denote a vector of random features. We may take each component $\Psi_i(\hat{x}) = \cos(w_i^\T \hat{x} + b_i)$ with the $(w_i, b_i)$ i.i.d. samples and write, for $\gamma > 0$ a learning rate,
\begin{align*}
    \widehat{H}(\hat{x}, t) &= \Psi(\hat{x})^\T \hat{\alpha}(t),\\
    \dot{\hat{\alpha}}(t) &= -\gamma \left(\left[\nabla_{\hat{p}}\Psi(\hat{x})\right]^\T \tilde{q}(t) - \left[\nabla_{\hat{q}}\Psi(\hat{x})\right]^\T\tilde{p}(t)\right).
\end{align*}

\paragraph{Results (Figure~\ref{fig:prediction})} 
We consider the sixty-dimensional ten body problem ($m=10$) in three dimensions ($d=3$).
With $K=2500$ features, the prediction and interpolation errors for the positions $\hat{q}(t) - q(t)$, momenta $\hat{p}(t) - p(t)$, and corresponding dynamics $\nabla_{\hat{p}}\widehat{H}$ and $-\nabla_{\hat{q}}\widehat{H}$ are driven to a small ball around zero (Figure~\ref{fig:prediction}A/B).
In the early stages of learning ($t \lesssim 5$), the trajectory prediction oscillates around the target trajectory. As learning proceeds, the prediction becomes smoother and accurately tracks the true system trajectory (Figure~\ref{fig:prediction}C).
As the number of random features $K$ increases, the sizes of the asymptotic balls in both the prediction and interpolation errors decrease as a power law in $K$ (Figure~\ref{fig:prediction}D). 

\paragraph{Power law exponents} Let $\xi$ denote the exponent $\limsup_{t\rightarrow\infty} \norm{\hat{x}(t) - x(t)}_2 \sim K^{-\xi}$ in the power law for the prediction error, and let $\zeta$ denote an analogous quantity for the interpolation error. 
Nonlinear least-squares fits lead to estimates ($\pm$ denotes 95\% confidence intervals) $\xi \approx 1.28 \pm 0.03$ and $\zeta \approx 0.77 \pm 0.03$.
For the adaptive predictor considered in this section, a Lyapunov function for the nominal error dynamics is the quadratic $V(t) = \norm{e(t)}_2^2$. 
Moreover, due to the feedback term $k\cdot\left(x(t) - \hat{x}(t)\right)$, the nominal dynamics is exponentially stable with rate $k$, and we may take $\rho(\norm{e}_2) \propto \norm{e}_2^2$.
This setting was considered in Example~\ref{ex:rf_adaptive} and leads to the analytical predictions $\xi= 1/2$ and $\zeta = 1/4$, where $\zeta = \xi/2$ follows after an application of Theorem~\ref{thm:ac_approx_interp}. 
The rates we obtain empirically are 
faster than the $\mathcal{O}\left(1/\sqrt{K}\right)$ Monte-Carlo rate for random feature approximations
predicted by our theory.
One plausible explanation for this observation is that more features are required to see the $\mathcal{O}\left(1/\sqrt{K}\right)$ tail behavior, as suggested by the flattening of the curve observed near $K \approx 20,000$.

\subsection{Adaptive control with deep neural networks}
\begin{figure}[!t]
    \centering
    \begin{tabular}{cc}
        \begin{overpic}[width=.475\textwidth]{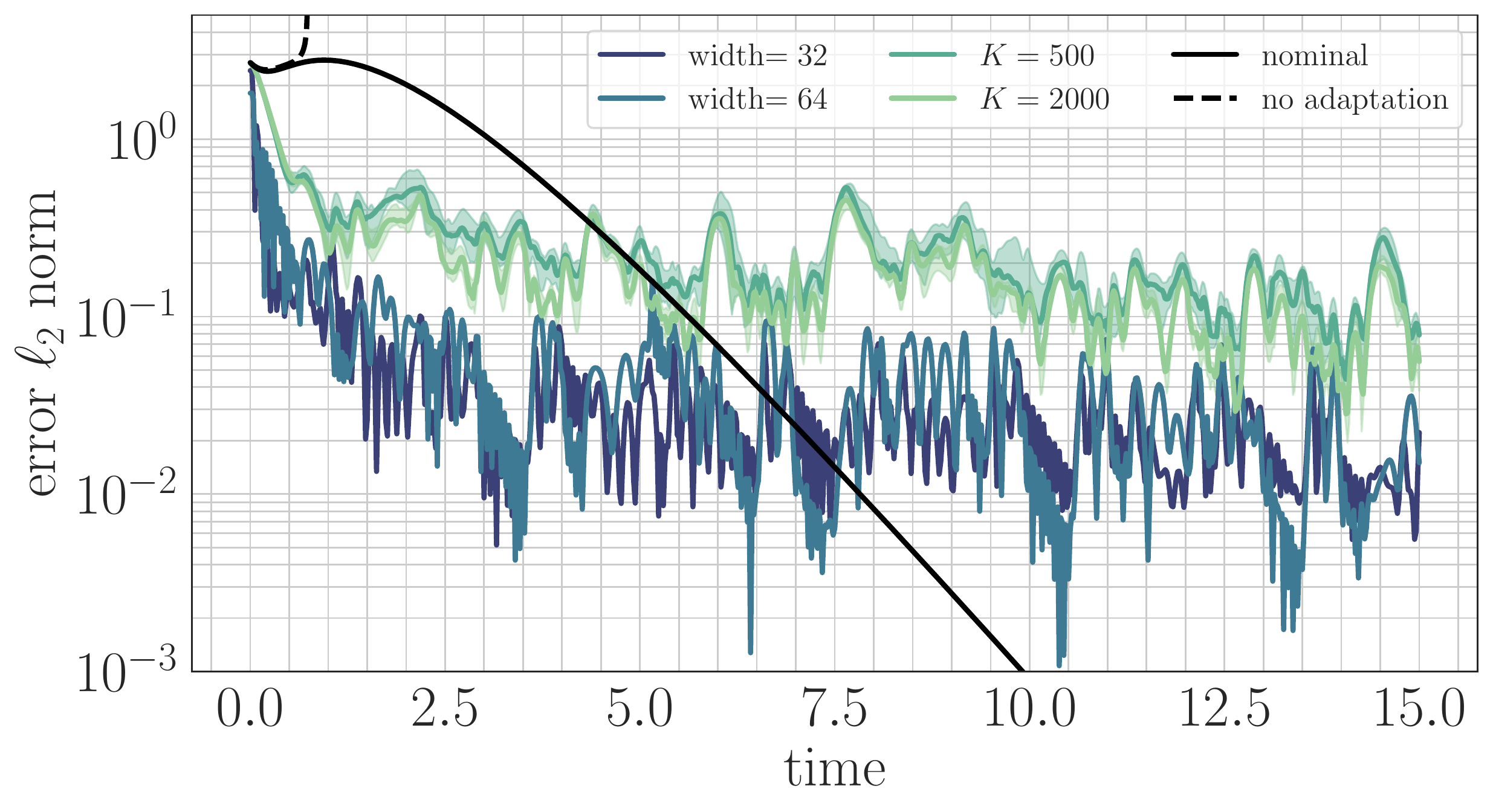}%
        \put(5, 55){\textbf{A}}
        \end{overpic}&  
        \begin{overpic}[width=.475\textwidth]{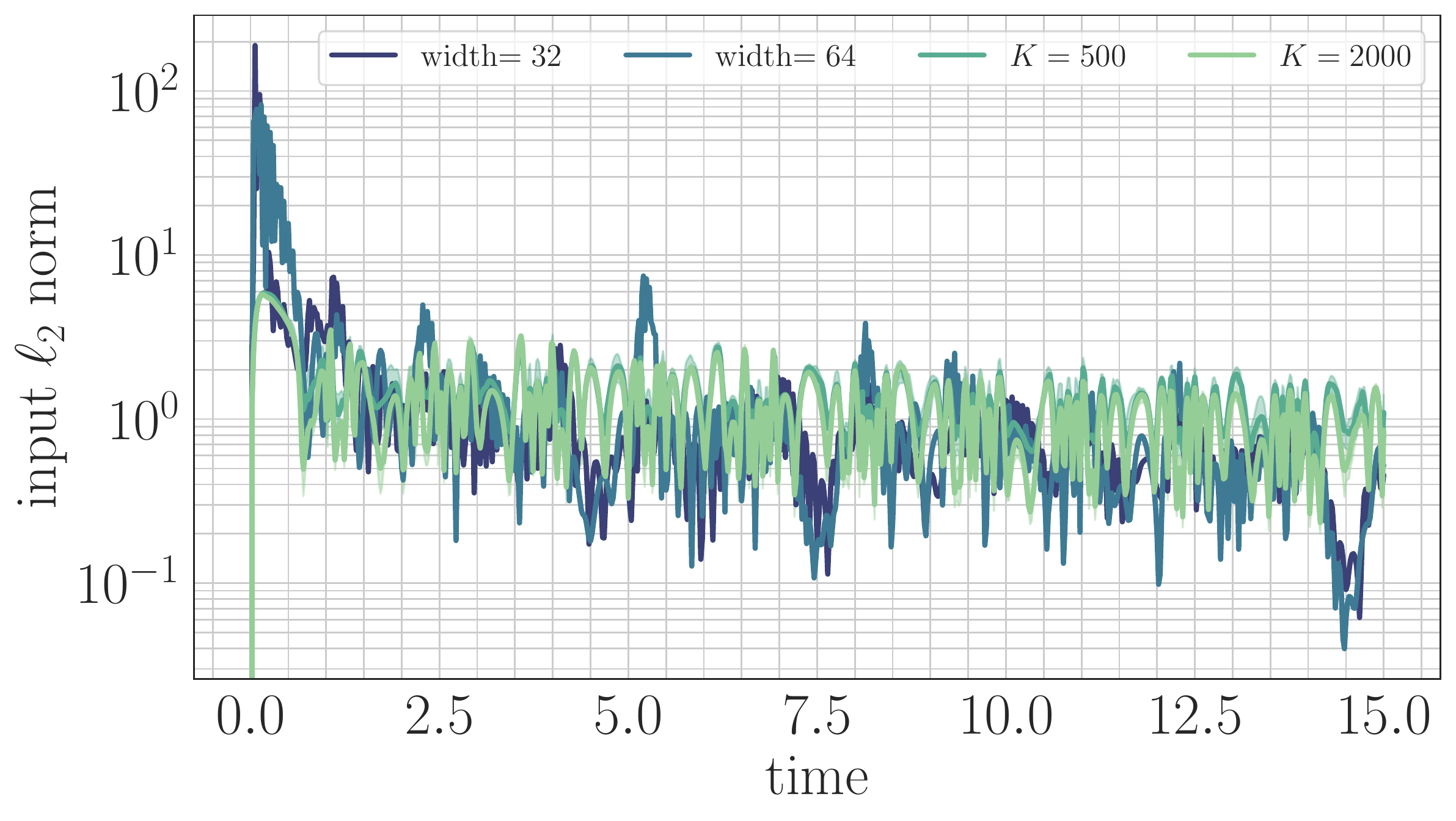}%
        \put(0, 55){\textbf{B}}
        \end{overpic}\\
        \begin{overpic}[width=.475\textwidth]{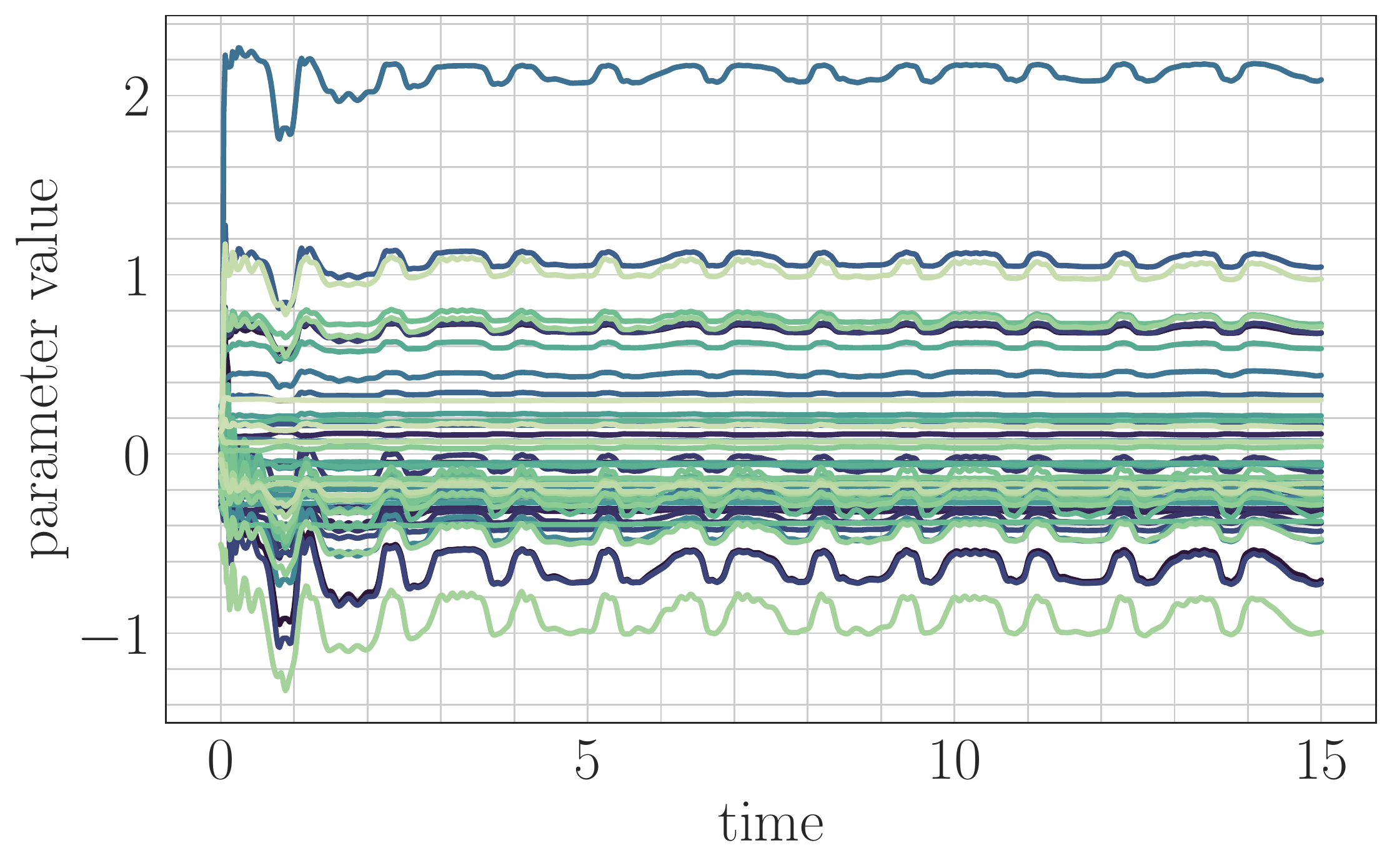}%
        \put(5, 60){\textbf{C}}
        \end{overpic}&  
        \begin{overpic}[width=.475\textwidth]{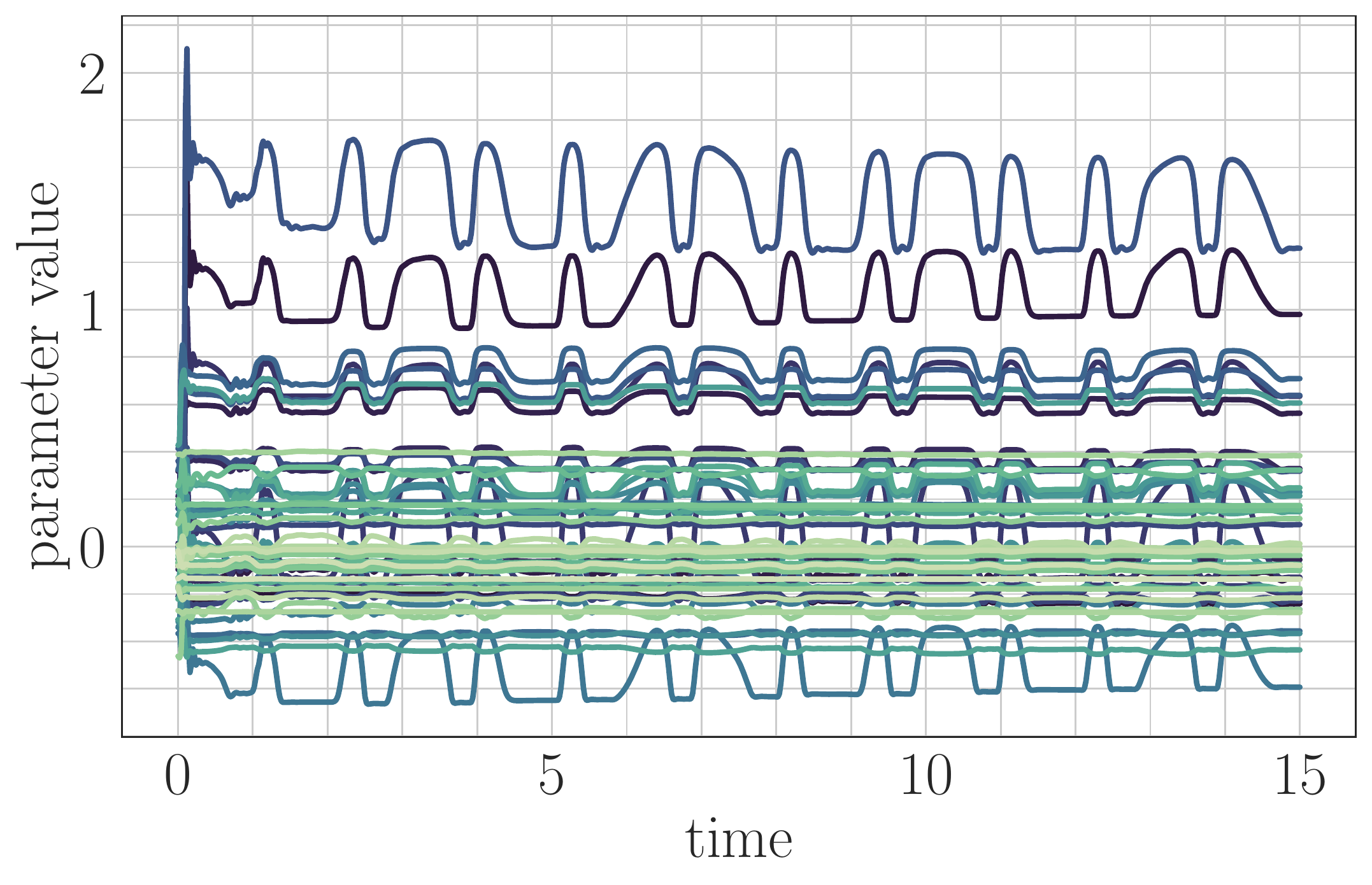}%
        \put(0, 60){\textbf{D}}
        \end{overpic}
    \end{tabular}
    \caption{\textbf{Adaptive control with multilayer networks.} (A) Tracking error $\norm{x(t) - x_d(t)}_2$ for the system without adaptation, the nominal dynamics, an adaptive system with a neural network approximation, and an adaptive system with a random feature approximation. The system without adaptation is driven unstable, while the adaptive systems with neural network and random feature approximations both regulate the actual trajectory to a ball around the desired trajectory. (B) Input norm $\norm{u(x(t), t)}_2$. The neural network system undergoes an initial transient with large input. (C/D) Linear (C) and hidden layer (D) weights over time for the multilayer representation with width of $32$ neurons. The hidden layer weights change significantly from their initialization, indicating that the network is operating outside of the kernel regime.}
    \label{fig:network_control}
\end{figure}

The preceding sections established the adaptive controllability of high-dimensional systems via randomized approximations of kernel machines. 
These random feature methods can be viewed as linearizations of neural networks~\citep{ghorbani_linearized_2020, jacot18ntk}, and have been shown to suffer from the curse of dimensionality when the target function only depends on a few relevant directions in the input space~\citep{bach_breaking, ghorbani_when_2020}.
Neural networks with a single hidden layer do not exhibit the same difficulties, which raises the question if it is possible to use deep neural networks for adaptive control.

\paragraph{A gradient flow} The Lyapunov-based adaptive law~\eqref{eqn:linear_update_standard} that forms the basis for the learning rules in Theorems~\ref{thm:ac_finite_approx}~\&~\ref{thm:dp_finite_approx} may also be written as the gradient flow~\citep{fradkov99}
\begin{equation}
    \label{eqn:velocity_gradient}
    \dot{\hat\alpha}(t) = -\gamma\nabla_{\hat{\alpha}}\dot{Q}(e(t), \hat{\alpha}(t), t).
\end{equation}
In~\eqref{eqn:velocity_gradient}, $\dot{Q}(e(t), \hat\alpha, t)$ denotes the time derivative of the Lyapunov function $Q$ for the nominal error dynamics along the actual error trajectory $e(t)$.
This formulation of the method shows that the parameters are updated to promote stability by enforcing negativity of $\dot{Q}(e(t), \hat\alpha(t), t)$.
Importantly,~\eqref{eqn:velocity_gradient} leads to a simple algorithm for non-linearly parameterized function approximators such as neural networks.
Choosing as adaptive input $u(x, t) = \varphi(x, t, \hat{\alpha}(t))$ a neural network and omitting time arguments for brevity,~\eqref{eqn:velocity_gradient} becomes
\begin{equation}
    \label{eqn:velocity_gradient_nlin}
    \dot{\hat{\alpha}} = -\gamma \left(\nabla_{\hat{\alpha}}\varphi(x, t, \hat\alpha)\right)^\T g_e(x, t)^\T \nabla Q(e, t).
\end{equation}
In~\eqref{eqn:velocity_gradient_nlin}, the linear basis functions $Y(x, t)$ from~\eqref{eqn:linear_update_standard} have been replaced by the Jacobian of the neural network evaluated at the current parameter estimates.
 
In general, it is challenging to obtain practical theoretical guarantees for~\eqref{eqn:velocity_gradient_nlin} due to the nonconvexity, time-dependence, and feedback properties of the resulting online optimization problem.
Nevertheless, for expressive classes of functions that empirically exhibit benign optimization landscapes such as neural networks, it is plausible that there exists a target set of parameters $\alpha$ that can render $\dot{Q}(e(t), \alpha, t)$ negative definite and that these parameters can be found via gradient-based optimization.

\paragraph{An unstable system} To test the adaptive law~\eqref{eqn:velocity_gradient_nlin}, we study a more difficult variant of~\eqref{eq:adaptive_control_exp} with a time-varying desired trajectory and an unknown dynamics that renders the system unstable in the absence of adaptation,
\begin{equation}
    \label{eq:adaptive_control_exp_net}
    \begin{aligned}
        \dot{x} &= A(x - x_d(t)) + \dot{x}_d(t) + u(x, t) - h(x), \:\:\: x \in \R^5,\\
        x_d(t) &= \sin\left(2\pi t + \cos\left(\sqrt{2}\pi t\right)\right),\\
        h_i(x) &= \frac{1}{4}x_i^4,
    \end{aligned}
\end{equation}
where $A$ is a known stable matrix and $x_d(t)$ denotes the desired trajectory. We take $Q(x, t) = \frac{1}{2}\left(x - x_d(t)\right)^\T P \left(x - x_d(t)\right)$ with $A^\T P + P A = -I$ as in Section~\ref{sec:exp:control}. We consider single hidden-layer neural networks with width of $32$ or $64$ neurons and the \texttt{swish} activation function. For comparison, we use the same random Fourier feature approximation of the Gaussian kernel as in Section~\ref{sec:exp:control}. We set $\gamma = 20$ for the random feature adaptation law and $\gamma = 10$ for the neural network.

\paragraph{Results (\Cref{fig:network_control})} Both the random feature and neural network representations effectively stabilize the system and regulate the actual trajectory to a ball around the desired trajectory (Figure~\ref{fig:network_control}A). 
The neural network obtains slightly improved performance over the random feature method for both choices of the width despite having similar or fewer parameters. 
This can be traced to learning of the hidden-layer weights, which indicates that the network is operating outside of the kernel regime (Figure~\ref{fig:network_control}C/D).

In exchange for this improved performance, we find that the neural network adaptation law~\eqref{eqn:velocity_gradient_nlin} is significantly more brittle to choice of hyperparameters: while the random feature approximation is provably stable for any choice of learning rate, the neural network adaptation law empirically renders the system unstable for many choices of learning rate.
Moreover, we find that the stable range of learning rates depends on the network architecture. Increasing the network depth without careful tuning of the learning rate often leads to instability.
Signatures of this phenomenon can be seen in the input norm $\norm{u(x, t)}_2$ even for trajectories that remain stable (Figure~\ref{fig:network_control}B), where the magnitude of the neural network input is seen to exceed that of the random feature input by one or two orders of magnitude during an initial transient.

\paragraph{Discussion} These observations are consistent with both the theory presented in this work and the approximation properties of neural networks. 
Neural networks, in principle, can perform better than kernel methods due to greater expressivity.
Nevertheless, due to the worst-case difficulty of the corresponding online nonconvex optimization, a stability proof as provided in this work for kernel methods is likely out of reach.
For adaptive control systems where stability of the closed-loop dynamics is necessary, these considerations may render kernel methods a more desirable choice than deeper architectures.
Nevertheless, understanding if the adaptive law~\eqref{eqn:velocity_gradient_nlin} can be modified to ensure stability of the closed-loop dynamics -- or, to the same end, if a neural network-based adaptive system can be augmented with a kernel-based approach -- are interesting directions of future research.
\section{Conclusions and future directions}
In this work, we introduced a novel nonparametric method for adaptive control and prediction that estimates the unknown dynamics over a reproducing kernel Hilbert space. By restricting to the space $\calF_2$, we analyzed efficient finite-dimensional randomized approximations that scale well to high dimension.
A promising future direction of work is to study the Banach space $\mathcal{F}_1$ of single-layer neural networks of the form $h(\cdot) = \int_{\Theta} \Phi(\cdot, \theta)\mu(d\theta)$ for a signed Radon measure $\mu$~\citep{bach_breaking, bengio_convex}.
The space $\calF_1$ admits convergence analyses for gradient-based optimization via the theory of Wasserstein gradient flows, as well as efficient approximation via particle methods~\citep{MeiE7665, rotskoff2019trainability}. Such approaches could in principle be generalized to the adaptive control setting considered here via the gradient flow algorithm~\eqref{eqn:velocity_gradient_nlin}.

\section{Acknowledgments}
NMB thanks Eric Vanden-Eijnden and Joan Bruna for many instructive discussions on the function spaces $\calF_1$ and $\calF_2$. All authors thank Pannag Sanketi and Vikas Sindhwani for helpful feedback.

\bibliography{paper}

\appendix
\section{Discrete sampling}
\label{app:sample}
Assume that measurements of the true system state $\left\{x(t_i)\right\}_{i=0}^{\infty}$ are received at potentially non-uniformly spaced intervals $t_i = t_0 + \sum_{i'=0}^{i-1} \Delta t_{i'}$. Denote $\hat{x}_i = \hat{x}(t_i)$ and let $\phi_{t_i+\Delta t_i}(\hat{x}_i)$ denote the flow from time $t_i$ to time $t_{i+1} = t_i + \Delta t_i$ of the
system $\dot{\hat{x}} = f(\hat{x}, t)$ starting at $\hat{x}(t_i) = \hat{x}_i$. We are interested in the contraction properties of the hybrid system
\begin{align*}
    \hat{x}_{i+1/2} &= \phi_{t_i + \Delta t_i}(\hat{x}_i), \:\: \hat{x}_{i+1} = k_i(\hat{x}_{i+1/2}, x_{i+1}),
\end{align*}
where $x_{i+1}$ denotes the measurement $x(t_{i+1})$. The following result is similar to~\citet[Eq. 6]{process_control}.
\begin{restatable}[]{myprop}{sample}
\label{prop:disc_meas}
Suppose that there exists some $\Theta : \R^n\times\R_{\geq 0} \rightarrow \R^{n\times n}$ and $0 < \beta < 1$ such that $y_{i+1} = k_i(y_i, x)$ is contracting as a discrete-time dynamical system with rate $\beta$ for any $x$, i.e.,
\begin{equation*}
    F_i := \Theta(y_{i+1}, t_{i+1})\frac{\partial k_i}{\partial y}(y_i, x)\Theta(y_i, t_i)^{-1}, \:\: F_i^\T F_i \preccurlyeq \beta I.
\end{equation*}
Assume that $k_i(x, x) = x$ for all $x \in \R^n$, and denote by
\begin{align*}
    \bar{\lambda}_i &= \sup_{t\in [t_i, t_{i+1}]}\lambda_{\max}\Bigg\{\Symm\left(\dot{\Theta}(\hat{x}(t), t) + \Theta(\hat{x}(t), t)\frac{\partial \hat{f}}{\partial \hat{x}}(\hat{x}(t), t)\Theta(\hat{x}(t), t)^{-1}\right)\Bigg\}
\end{align*}
the maximum expansion rate of the open loop dynamics between $t_i$ and $t_{i+1}$ in the metric $M(\hat{x}, t) = \Theta(\hat{x}, t)^\T \Theta(\hat{x}, t)$. Then the Riemannian energy in the metric $M(\hat{x}, t)$ obeys
\begin{equation*}
    E(\hat{x}_{i+1}, x_{i+1}) \leq \beta e^{\bar{\lambda}_i \Delta t_i} E(\hat{x}_i, x_i).
\end{equation*}
\end{restatable}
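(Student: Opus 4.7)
The plan is to split the one-step hybrid update $(\hat x_i, x_i) \mapsto (\hat x_{i+1}, x_{i+1})$ into the continuous open-loop flow over $[t_i, t_{i+1}]$ and the discrete jump $\hat x_{i+1} = k_i(\hat x_{i+1/2}, x_{i+1})$, bound the Riemannian energy multiplicatively at each stage, and compose.

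For the continuous stage, I would apply standard contraction analysis in the metric $M = \Theta^\T \Theta$. Take a smooth family of curves $\gamma(\cdot, t) : [0,1] \to \R^n$ with $\gamma(0, t) = \hat x(t)$ and $\gamma(1, t) = x(t)$, each point of which evolves under $\dot y = \hat f(y, t)$; then $\partial_s \gamma$ satisfies the variational equation $\tfrac{d}{dt} \partial_s \gamma = (\partial \hat f / \partial \hat x)(\gamma, t)\, \partial_s \gamma$. Substituting $z := \Theta(\gamma, t)\,\partial_s \gamma$ gives $\dot z = \big(\dot \Theta + \Theta (\partial \hat f / \partial \hat x) \Theta^{-1}\big) z$, so by the definition of $\bar \lambda_i$, $\tfrac{d}{dt} \|z\|_2^2 \leq 2 \bar \lambda_i \|z\|_2^2$ uniformly in $s$. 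Gr\"onwall's inequality yields $\partial_s \gamma(s, t_{i+1})^\T M(\gamma(s, t_{i+1}), t_{i+1}) \partial_s \gamma(s, t_{i+1}) \leq e^{2 \bar \lambda_i \Delta t_i}\, \partial_s \gamma(s, t_i)^\T M(\gamma(s, t_i), t_i)\, \partial_s \gamma(s, t_i)$ pointwise in $s$; integrating in $s$ and passing to the infimum over admissible curves gives $E(\hat x_{i+1/2}, x_{i+1}) \leq e^{2 \bar \lambda_i \Delta t_i} E(\hat x_i, x_i)$, with any factor-of-two discrepancy against the stated bound absorbed into the convention used for $\bar \lambda_i$.

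For the discrete stage, let $\gamma$ be a minimizing geodesic in the metric $M(\cdot, t_{i+1})$ from $\hat x_{i+1/2}$ at $s=0$ to $x_{i+1}$ at $s=1$, and push it forward via $\tilde \gamma(s) := k_i(\gamma(s), x_{i+1})$. The hypothesis $k_i(x, x) = x$ makes $\tilde \gamma(0) = \hat x_{i+1}$ and $\tilde \gamma(1) = x_{i+1}$, so $\tilde \gamma$ is admissible for the infimum defining $E(\hat x_{i+1}, x_{i+1})$. Its tangent is $\tilde \gamma_s = (\partial k_i / \partial y)(\gamma(s), x_{i+1})\, \gamma_s$. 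Expressing the $M$-squared length of $\tilde \gamma$ through $\Theta$ and applying the contraction bound $F_i^\T F_i \preccurlyeq \beta I$ pointwise in $s$ gives $E(\hat x_{i+1}, x_{i+1}) \leq \beta E(\hat x_{i+1/2}, x_{i+1})$. Multiplying the two bounds produces the claim.

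The main obstacle is a bookkeeping one: the hypothesis defines $F_i$ with $\Theta$ evaluated at $t_i$ on the input side and at $t_{i+1}$ on the output side, whereas the geodesic transport above naturally uses $\Theta(\cdot, t_{i+1})$ at both endpoints. I would resolve this by viewing the abstract discrete-time step $i \mapsto i+1$ in the hypothesis as the jump $\hat x_{i+1/2} \mapsto \hat x_{i+1}$ occurring at time $t_{i+1}$, so that both arguments of $F_i$ are evaluated at $t_{i+1}$ in the transport argument. Beyond this and the factor-of-two convention, the remaining work is routine: existence of the smooth family of virtual trajectories follows from local Lipschitzness of $\hat f$, and existence of the minimizing geodesic follows from uniform positive definiteness of $M$ implicit in the invertibility of $\Theta$.
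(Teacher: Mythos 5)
Your proposal is correct and mirrors the paper's own proof: the same split into a continuous open-loop stage (bounded via the variational equation for $\Theta\,\gamma_s$ and a Gr\"onwall/comparison argument in the metric $M=\Theta^\T\Theta$, then passing to the infimum over admissible curves) and a discrete jump stage (bounded by pushing the geodesic through $y\mapsto k_i(y,x_{i+1})$, using $k_i(x,x)=x$ and $F_i^\T F_i \preccurlyeq \beta I$), composed to give the product bound; the only difference is that you prove the jump-step energy contraction directly where the paper cites the discrete-time contraction literature, and you treat the flowed geodesic more carefully as merely an admissible curve. Your exponent $e^{2\bar\lambda_i\Delta t_i}$ is what the stated definition of $\bar\lambda_i$ actually yields, and the missing factor of $2$ in the paper's bound traces to its step $2\psi_t^\T J_t\psi_t\le\bar\lambda_i\,\psi_t^\T\psi_t$, so your remark about absorbing the factor into the convention for $\bar\lambda_i$ is the right diagnosis rather than a gap.
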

\begin{proof}
Let $M_{i}(\cdot) = M(\cdot, t_{i})$.
Let $t_{i+1/2} = t_i + \Delta t_i^{-}$ denote the instant before the measurement. Let $\gamma^{i+1}:[0, 1] \rightarrow \R^n$ denote a geodesic in the metric $M_{i+1}(\cdot)$ between $\hat{x}_{i+1}$ and $x_{i+1}$,
and let $\gamma^{i+1}_s = \frac{d}{ds} \gamma^{i+1}$.  The Riemannian energy under the metric $M_{i+1}$ is then
\begin{align*}
    E(\hat{x}_{i+1}, x_{i+1}) &= \int_0^1 \gamma_s^{i+1}(s)^\T M_{i+1}\left(\gamma^{i+1}(s)\right)\gamma_s^{i+1}(s) ds.
\end{align*}
Now let $\gamma^{i+1/2}: [0, 1]\rightarrow \R^n$ denote a geodesic in the metric $M_{i+1/2}(\cdot)$ between $\hat{x}_{i+1/2}$ and $x_{i+1}$. Observe that because $k_i(x, x) = x$ for all $x$, $\hat{x}_{i+1/2} = x_{i+1}$ is a fixed point. Then, by contraction of $k(\hat{x}, x)$ in $\hat{x}$ with rate $0 < \beta < 1$ (cf.~\citet{lohmiller98contraction},~\citet{pham08}):
\begin{align*}
    \int_0^1 \gamma_s^{i+1}(s)^\T M_{i+1}\left(\gamma^{i+1}(s)\right)\gamma_s^{i+1}(s) \:ds \leq \beta \int_0^1 \gamma_s^{i+1/2}(s)^\T M_{i+1/2}\left(\gamma^{i+1/2}(s)\right)\gamma_s^{i+1/2}(s) \:ds.
\end{align*}
Let $\gamma^i : [0, 1]\rightarrow \R^n$ denote a geodesic in the metric $M(\hat{x}, t)$ between $\hat{x}_i$ and $x_i$, let $\psi_t(s) = \Theta\left(\gamma^{i}(s), t\right)\gamma_s^{i}(s)$, and define 
\begin{align*}
    J_t(s) &= \dot{\Theta}\left(\gamma^{i}(s), t\right) + \Theta\left(\gamma^{i}(s), t\right)\frac{\partial f}{\partial x}\left(\gamma^{i}(s), t\right)\Theta\left(\gamma^{i}(s), t\right)^{-1}.
\end{align*}
Observe that $\frac{d}{dt}\psi_t(s) = J_t(s)\psi_t(s)$. Hence,
\begin{align*}
    \frac{d}{dt}\left[\gamma_s^{i}(s)^\T M\left(\gamma^{i}(s), t\right)\gamma_s^{i}(s)\right] &= 2 \psi_t(s)^\T J_t(s) \psi_t(s)\\
    &\leq \bar{\lambda} \psi_t(s)^\T \psi_t(s)\\
    &= \bar{\lambda}_i \gamma_s^{i}(s)^\T M\left(\gamma^{i}(s), t\right)\gamma_s^{i}(s).
\end{align*}
Then, by the comparison lemma,
\begin{align*}
    \gamma_s^{i+1/2}(s)^\T M_{i+1/2}\left(\gamma^{i+1/2}(s)\right)\gamma_s^{i+1/2}(s) &\leq e^{\bar{\lambda}_i \Delta t_i}\gamma_s^{i}(s)^\T M_i\left(\gamma^{i}(s)\right)\gamma_s^{i}(s).
\end{align*}
Plugging this in to our previous bound,
\begin{align*}
    E(\hat{x}_{i+1}, x_{i+1}) &\leq \beta e^{\bar{\lambda}_i \Delta t_i}\int_0^1 \gamma_s^{i}(s)^\T M_i\left(\gamma^{i}(s)\right)\gamma_s^{i}(s) \:ds.
\end{align*}
Observing that $\int_0^1 \gamma_s^{i}(s)^\T M\left(\gamma^{i}(s), t\right)\gamma_s^{i}(s)ds = E(\hat{x}_i, x_i)$ completes the proof.
\end{proof}

\section{Preliminary results}

Let $E$ and $E'$ be normed vector spaces.
Denote by $\calL(E, E')$ the space
of linear operators from $E$ to $E'$ 
equipped with the operator norm.
A function $f(x, t)$ mapping $E \times \R_{\geq 0} \mapsto F$
with $E$ and $F$ normed vector spaces is said to be
locally bounded in $x$ if for every $R > 0$ and $T > 0$, 
\begin{align*}
    \sup_{t \in [0, T]} \sup_{\norm{x}_E \leq R} \norm{f(x, t)}_{F} < \infty.
\end{align*}

\begin{myprop}
\label{prop:local_composition}
Let $\{E_i\}_{i=1}^{2}$, $\{F_i\}_{i=1}^{2}$ be normed vector spaces and let
$f_i : E_i \times \R_{\geq 0} \rightarrow F_i$ for $i \in \{1, 2\}$
be locally 
Lipschitz. Then the following hold
\begin{enumerate}[(i)]
\item If $E_1 = E_2$ and $F_1 = F_2$, then the sum
$(x, t) \mapsto f_1(x, t) + f_2(x, t)$
is locally Lipschitz.
\item If $E_1 = E_2$, $F_1 = \calL(F_2, F_3)$,
and both $f_1$ and $f_2$ are locally bounded,
then the 
product $(x, t) \mapsto f_1(x, t) f_2(x, t)$ is locally 
Lipschitz and locally bounded.
\item If $F_1 = E_2$ and both $f_1$ and $f_2$ are locally bounded, then the composition $(x, t) \mapsto f_2(f_1(x, t), t)$ is locally Lipschitz and locally bounded.
\end{enumerate}
\end{myprop}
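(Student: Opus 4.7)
The proof proposal follows the standard pattern for algebraic closure properties of locally Lipschitz functions, with the main subtlety being the careful bookkeeping of which set one is Lipschitz on in each case. The plan is to fix arbitrary $T > 0$ and $R > 0$ throughout and reduce each claim to a finite quantitative bound on $B_{E_\ast}(R) \times [0,T]$.

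For part (i), I would simply apply the triangle inequality: if $L_i$ denotes the Lipschitz constant of $f_i$ on $B_{E_1}(R) \times [0,T]$, then $\|f_1(x,t) + f_2(x,t) - f_1(y,t) - f_2(y,t)\|_{F_1} \leq (L_1 + L_2)\|x-y\|_{E_1}$, giving the desired local Lipschitz estimate. For part (ii), I would use the standard product splitting $f_1(x,t) f_2(x,t) - f_1(y,t) f_2(y,t) = f_1(x,t)(f_2(x,t) - f_2(y,t)) + (f_1(x,t) - f_1(y,t)) f_2(y,t)$. Bounding the first summand uses the local boundedness of $f_1$ in $\calL(F_2, F_3)$ together with the local Lipschitz property of $f_2$, while the second summand uses local boundedness of $f_2$ together with the local Lipschitz property of $f_1$; submultiplicativity of the operator norm on $\calL(F_2, F_3)$ makes both estimates immediate. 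Local boundedness of the product then follows from submultiplicativity applied to the local bounds on $f_1$ and $f_2$.

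For part (iii), the key step is to use local boundedness of $f_1$ to transfer the $B_{E_1}(R)$ restriction on $x$ into a $B_{E_2}(M)$ restriction on $f_1(x,t)$, where $M := \sup_{t \in [0,T], \|x\|_{E_1} \leq R} \|f_1(x,t)\|_{F_1} < \infty$ by assumption. With this $M$ in hand, let $L_2$ be the Lipschitz constant of $f_2$ on $B_{E_2}(M) \times [0,T]$ and $L_1$ the Lipschitz constant of $f_1$ on $B_{E_1}(R) \times [0,T]$. Chaining, $\|f_2(f_1(x,t), t) - f_2(f_1(y,t), t)\|_{F_2} \leq L_2 \|f_1(x,t) - f_1(y,t)\|_{F_1} \leq L_1 L_2 \|x - y\|_{E_1}$. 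Local boundedness of the composition is immediate from $\|f_2(f_1(x,t), t)\|_{F_2} \leq \sup_{t \in [0,T], \|z\|_{E_2} \leq M} \|f_2(z,t)\|_{F_2}$, which is finite by local boundedness of $f_2$.

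None of the three parts presents a genuine obstacle; the only step that requires any care is (iii), where one must not forget that the local Lipschitz constant of $f_2$ must be chosen on a ball whose radius is adapted to the range of $f_1$ on $B_{E_1}(R) \times [0,T]$, which is precisely why the local boundedness hypothesis on $f_1$ is needed. The full write-up should be short, essentially three displays of the form above, and can be carried out without additional machinery beyond the definitions.
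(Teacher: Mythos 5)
Your proposal is correct and follows essentially the same route as the paper's proof: triangle inequality for the sum, the standard splitting $f_1(x)f_2(x)-f_1(y)f_2(y)=f_1(x)(f_2(x)-f_2(y))+(f_1(x)-f_1(y))f_2(y)$ together with local bounds for the product, and for the composition the transfer of the ball $B_{E_1}(R)$ into a ball in $E_2$ via local boundedness of $f_1$ before chaining Lipschitz constants. No gaps; the constants you introduce play exactly the roles of the paper's $C$, $C'$, and $C''$.
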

\begin{proof}
Let $R$ and $T$ be arbitrary positive constants.
Let $C = C(R, T) > 0$ be a finite positive constant such that:
\begin{align*}
    \sup_{t \in [0, T]} \norm{f_1(x, t) - f_1(y, t)}_{F_1} &\leq C \norm{x-y}_{E_1} \:\: \forall x,y \in B_{E_1}(R), \\
    \sup_{t \in [0, T]} \norm{f_2(x, t) - f_2(y, t)}_{F_2} &\leq C \norm{x-y}_{E_2} \:\: \forall x,y \in B_{E_2}(R).
\end{align*}

Now, let $x,  y \in B_{E_1}(R)$ and $t \in [0, T]$
be arbitrary.

\paragraph{The sum $(x, t) \mapsto f_1(x, t) + f_2(x, t)$.}
Observe that
\begin{align*}
    \norm{f_1(x, t) + g_1(x, t) - (f_2(y, t) + g_2(y, t))}_{F_1} &\leq \norm{f_1(x, t) - f_1(y, t)}_{F_1} + \norm{f_2(x, t) - f_2(y, t)}_{F_1} \\
    &\leq 2C \norm{x-y}_{E_1}.
\end{align*}

\paragraph{The product $(x, t) \mapsto f_1(x, t) f_2(x, t)$.}
Let $C' = C'(R, T) > 0$ be a finite positive constant such that:
\begin{align*}
    \sup_{t \in [0, T]} \sup_{\norm{x}_{E_1} \leq R} \max\{\norm{f_1(x, t)}_{\calL(F_2, F_3)}, \norm{f_2(x, t)}_{F_2}\} \leq C'.
\end{align*}
Now observe that
\begin{align*}
    \norm{f_1(x, t)f_2(x, t) - f_1(y, t)f_2(y, t)}_{F_3} &\leq \norm{f_1(x, t)-f_1(y,t)}_{\calL(F_2, F_3)}\norm{f_2(x,t)}_{F_2} \\
    &\qquad + \norm{f_1(y,t)}_{\calL(F_2, F_3)}\norm{f_2(x,t)-f_2(y,t)}_{F_2} \\
    &\leq 2C C' \norm{x-y}_{E_1}.
\end{align*}
The fact that the composition is locally bounded is immediate.

\paragraph{The composition $(x, t) \mapsto f_2(f_1(x, t), t)$.}
First, let $C' = C'(R, T)$ be such that:
\begin{align*}
    \sup_{t \in [0, T]} \sup_{\norm{x}_{E_1}\leq R} \norm{f_1(x, t)}_{F_1} \leq C'.
\end{align*}
Next, let $C'' = C''(R, T)$ be such that:
\begin{align*}
    \sup_{t \in [0, T]} \norm{f_2(x, t) - f_2(y, t)}_{F_2} \leq C'' \norm{x - y}_{E_2} \:\: \forall x, y \in B_{E_2}(C').
\end{align*}
Then we have:
\begin{align*}
    \norm{f_2(f_1(x, t), t) - f_2(f_1(y, t), t)}_{F_2} \leq C'' \norm{f_1(x, t) - f_1(y, t)}_{F_1} \leq C C'' \norm{ x-y}_{E_1}
\end{align*}
This shows that the composition is locally Lipschitz.
The fact that the composition is locally bounded is immediate.

\end{proof}

Now, let $E$ and $F$ be normed vector spaces and
let $U \subseteq E$.
A function $ f: U \rightarrow F$ is said to be globally
Lipschitz (uniformly Lipschitz) if
\begin{align*}
    \sup_{x, y \in U, x \neq y} \frac{\norm{f(x) - f(y)}_F}{\norm{x-y}_E} < \infty.
\end{align*}
A function $f : U \rightarrow F$ is said to be globally bounded
(uniformly bounded) if:
\begin{align*}
    \sup_{x \in U} \norm{f(x)}_F < \infty.
\end{align*}

\begin{myprop}
\label{prop:global_lipschitz_composition}
Let $\{E_i\}_{i=1}^{2}$ and $\{F_i\}_{i=1}^{2}$ be collections of normed vector spaces.
Let $f_i : U_i \rightarrow F_i$ with $U_i \subseteq E_i$ for $i \in \{1, 2\}$ be globally Lipschitz. Then the following hold
\begin{enumerate}
    \item If $E_1 = E_2$ and $F_1 = F_2$, then the sum $x \mapsto f_1(x) + f_2(x)$ is globally Lipschitz.
    \item If $E_1 = E_2$, $F_1 = \calL(F_2, F_3)$, and both $f_1$ and $f_2$ are globally bounded, then the product $x \mapsto f_1(x) f_2(x)$ is globally Lipschitz and globally bounded.
    \item If $F_1 = E_2$ and both $f_1$ and $f_2$ are globally bounded, then the composition $(x, t) \mapsto f_2(f_1(x))$ is globally Lipschitz and globally bounded.
\end{enumerate}
\end{myprop}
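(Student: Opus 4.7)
The plan is to directly parallel the proof of Proposition~\ref{prop:local_composition}, but with the simplification that the Lipschitz and boundedness constants are already uniform over the entire domain, so no localization step is required. Let $L_i = \sup_{x\neq y \in U_i} \norm{f_i(x) - f_i(y)}_{F_i} / \norm{x-y}_{E_i}$ denote the (global) Lipschitz constants, and, whenever boundedness is assumed, let $B_i = \sup_{x \in U_i} \norm{f_i(x)}_{F_i}$ denote the (global) uniform bounds. These are finite by hypothesis. In the composition case, one also needs the implicit inclusion $f_1(U_1) \subseteq U_2$ so that $f_2 \circ f_1$ is defined; I would state this as the standing assumption before beginning.

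For the sum, for any $x, y \in U := U_1 = U_2$, the triangle inequality gives
\begin{align*}
\norm{(f_1+f_2)(x) - (f_1+f_2)(y)}_{F_1} \leq \norm{f_1(x)-f_1(y)}_{F_1} + \norm{f_2(x)-f_2(y)}_{F_1} \leq (L_1 + L_2) \norm{x-y}_{E_1},
\end{align*}
which establishes the global Lipschitz property with constant $L_1 + L_2$.

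For the product, I add and subtract $f_1(y) f_2(x)$ and use submultiplicativity of the operator norm:
\begin{align*}
\norm{f_1(x) f_2(x) - f_1(y) f_2(y)}_{F_3} &\leq \norm{f_1(x) - f_1(y)}_{\calL(F_2, F_3)} \norm{f_2(x)}_{F_2} + \norm{f_1(y)}_{\calL(F_2, F_3)} \norm{f_2(x) - f_2(y)}_{F_2} \\
&\leq (L_1 B_2 + B_1 L_2) \norm{x-y}_{E_1}.
\end{align*}
Global boundedness follows immediately from $\norm{f_1(x) f_2(x)}_{F_3} \leq B_1 B_2$.

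For the composition, since $f_1(x), f_1(y) \in U_2$ for all $x, y \in U_1$, the global Lipschitzness of $f_2$ applies without any restriction and
\begin{align*}
\norm{f_2(f_1(x)) - f_2(f_1(y))}_{F_2} \leq L_2 \norm{f_1(x) - f_1(y)}_{F_1} \leq L_1 L_2 \norm{x-y}_{E_1},
\end{align*}
while boundedness is immediate from $\norm{f_2(f_1(x))}_{F_2} \leq B_2$. There is no real obstacle here; the only subtlety worth flagging is that, in contrast to the product case, the composition statement does not actually require $f_1$ itself to be globally bounded for Lipschitzness to follow, but global boundedness of $f_1$ (together with $f_1(U_1) \subseteq U_2$) ensures $f_2 \circ f_1$ is well-defined on a bounded set and preserves the parallel with Proposition~\ref{prop:local_composition}.
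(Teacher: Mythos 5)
Your proposal is correct and follows essentially the same route as the paper, which simply defers to the argument of Proposition~\ref{prop:local_composition}: triangle inequality for the sum, the add-and-subtract decomposition with submultiplicativity for the product, and chaining the Lipschitz constants for the composition, with the localization step dropped since the constants are already global. Your remark about the implicit inclusion $f_1(U_1) \subseteq U_2$ is a reasonable clarification but does not change the substance.
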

\begin{proof}
Nearly identical proof as Proposition~\ref{prop:local_composition}
\end{proof}

The following result generalizes Barbalat's lemma to the case when the limiting
value of a function $f$ only converges to a ball. We first state Barbalat's lemma, and then state our generalization.
\begin{myprop}[Barbalat's lemma]
Let $f \in C^1(\R_{\geq 0}, \R)$ satisfy $\lim_{t\rightarrow \infty} f(t) < \infty$. Further assume that $f'$ is uniformly continuous. Then $\lim_{t\rightarrow \infty} f'(t) = 0$.
\end{myprop}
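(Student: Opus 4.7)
The plan is to prove Barbalat's lemma by contradiction, exploiting uniform continuity of $f'$ to convert pointwise non-vanishing of $f'$ into a non-vanishing contribution to the integral $\int f'$, which then contradicts the Cauchy property of $f$ implied by its having a finite limit at infinity.

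Concretely, suppose for contradiction that $f'(t) \not\to 0$ as $t \to \infty$. Then there exist $\varepsilon_0 > 0$ and a sequence $t_n \to \infty$ with $|f'(t_n)| \geq \varepsilon_0$ for all $n$. I would first extract a subsequence (still denoted $t_n$) along which the intervals $[t_n, t_n + 1]$ are pairwise disjoint; this can be arranged simply by thinning the sequence so that $t_{n+1} \geq t_n + 1$. Next, I would invoke uniform continuity of $f'$ on $\R_{\geq 0}$ to pick $\delta \in (0, 1)$ such that $|s - t| \leq \delta$ implies $|f'(s) - f'(t)| \leq \varepsilon_0/2$. On the interval $[t_n, t_n + \delta]$ we then have $|f'(s)| \geq \varepsilon_0/2$ and $f'(s)$ retains the sign of $f'(t_n)$. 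By the fundamental theorem of calculus,
\begin{equation*}
    |f(t_n + \delta) - f(t_n)| = \left|\int_{t_n}^{t_n + \delta} f'(s)\, ds\right| \geq \frac{\varepsilon_0 \delta}{2}.
\end{equation*}

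Finally, since $\lim_{t \to \infty} f(t)$ exists and is finite, $f$ is Cauchy at infinity: for every $\eta > 0$ there exists $T$ such that $|f(s) - f(u)| \leq \eta$ for all $s, u \geq T$. Choosing $\eta = \varepsilon_0 \delta / 4$ and picking $n$ large enough that $t_n \geq T$ contradicts the lower bound just derived, completing the proof. The argument contains essentially no obstacle beyond bookkeeping: the only detail requiring care is the sign-preservation step that upgrades the pointwise bound $|f'(t_n)| \geq \varepsilon_0$ to an interval estimate on $|\int f'|$, and this is precisely what uniform (rather than merely pointwise) continuity of $f'$ provides. No special structure of $f$ beyond $C^1$ regularity is needed.
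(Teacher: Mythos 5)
Your proof is correct, and it is the standard argument: the only point needing care, sign preservation of $f'$ on $[t_n, t_n+\delta]$ from the bound $|f'(s)-f'(t_n)|\leq \varepsilon_0/2$, is handled properly, and the Cauchy property of $f$ at infinity delivers the contradiction. Note that the paper states this classical lemma without proof; the closest comparison is its proof of the generalized version (Proposition~\ref{prop:generalized_barbalat}), which follows the same contradiction-plus-integration template but replaces uniform continuity of $f'$ with an $L$-Lipschitz assumption so that the window length $\delta$ and the resulting drop in $f$ can be made quantitative, yielding the explicit bound $\limsup_{t\to\infty}|f'(t)|\leq 2\sqrt{\varepsilon L}$ rather than a purely qualitative limit. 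Your thinning of the sequence $t_n$ to make the intervals disjoint is harmless but unnecessary: a single sufficiently large $n$ with $t_n \geq T$ already produces the contradiction.
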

\begin{myprop}[Generalized Barbalat's lemma]
\label{prop:generalized_barbalat}
Let $f\in C^1(\R_{\geq 0}, \R)$ satisfy
$\limsup_{t \rightarrow \infty} \abs{f(t) - \alpha} \leq \varepsilon$ for some $\alpha \in \R$ and $\varepsilon \geq 0$. Further assume that $f'$ is $L$-Lipschitz.
Then,
\begin{align*}
    \limsup_{t \rightarrow \infty} \abs{f'(t)} \leq 2 \sqrt{\varepsilon L}.
\end{align*}
\end{myprop}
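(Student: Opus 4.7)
The plan is to argue by contradiction in the style of the classical Barbalat argument, but quantitatively. Suppose $\limsup_{t\to\infty} |f'(t)| > 2\sqrt{\varepsilon L}$. Then there exist $\eta > 0$ and a sequence $t_n \to \infty$ such that $a_n := |f'(t_n)| \ge 2\sqrt{\varepsilon L} + \eta$ for all $n$. The goal is to use the $L$-Lipschitz property of $f'$ to convert a lower bound on $|f'(t_n)|$ into a lower bound on an increment $|f(t_n+h_n) - f(t_n)|$, and then contradict the $\limsup$ hypothesis on $f$.

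The key localization step is to observe that on the interval $[t_n, t_n + a_n/L]$, Lipschitz continuity of $f'$ gives $|f'(s)| \ge a_n - L(s-t_n) \ge 0$ with $f'$ keeping the sign of $f'(t_n)$. Hence $f$ is monotone on this interval and
\[
    |f(t_n + a_n/L) - f(t_n)| = \int_{t_n}^{t_n + a_n/L} |f'(s)|\,ds \ge \int_0^{a_n/L} (a_n - Ls)\,ds = \frac{a_n^2}{2L}.
\]

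On the other hand, from $\limsup_{t\to\infty} |f(t) - \alpha| \le \varepsilon$, for any $\eta' > 0$ and $n$ large enough both $t_n$ and $t_n + a_n/L$ lie in a region where $|f(\cdot) - \alpha| \le \varepsilon + \eta'$, which yields $|f(t_n + a_n/L) - f(t_n)| \le 2\varepsilon + 2\eta'$. Combining with the lower bound gives $a_n^2 \le 4L(\varepsilon + \eta')$, so $a_n \le 2\sqrt{L(\varepsilon + \eta')}$. Sending $\eta' \downarrow 0$ contradicts $a_n \ge 2\sqrt{\varepsilon L} + \eta$, proving the result.

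I do not anticipate any serious obstacle: the main subtlety is simply making sure that the interval of length $a_n/L$ is short enough that $f'$ does not change sign (which is exactly what the Lipschitz constant buys), and being careful with the $\eta'$ slack when invoking the $\limsup$ hypothesis. The argument also cleanly recovers the classical Barbalat lemma in the limit $\varepsilon \to 0$.
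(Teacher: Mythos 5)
Your proof is correct and follows essentially the same strategy as the paper's: argue by contradiction along a sequence $t_n$ where $\abs{f'(t_n)}$ exceeds the threshold, use the $L$-Lipschitz property of $f'$ to lower-bound the increment of $f$ over a short interval after $t_n$ (you take length $\abs{f'(t_n)}/L$ and exploit sign preservation, the paper takes the fixed length $2\sqrt{\varepsilon L}/L$ and bounds $\bigl|\delta f'(t_n) + \int (f'-f'(t_n))\bigr|$), and then contradict $\limsup_{t\to\infty}\abs{f(t)-\alpha}\leq\varepsilon$. Your bookkeeping with the explicit gap $\eta$ and slack $\eta'$ is in fact slightly more careful than the paper's final chain of limsup inequalities, and it also covers the degenerate case $\varepsilon=0$ cleanly, where the paper's choice $\delta = 2\sqrt{\varepsilon L}/L$ collapses to a zero-length interval.
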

\begin{proof}
Suppose for a contradiction that
$\limsup_{t \rightarrow \infty} \abs{f'(t)} > 2\sqrt{\varepsilon L}$.
Then there exists an increasing sequence $\{t_n\}_{n \geq 1}$ with $t_n \rightarrow \infty$ such that $\abs{f'(t_n)} > 2\sqrt{\varepsilon L}$ for all $n \geq 1$.
Define $\delta := 2\sqrt{\varepsilon L}/L$. Then for any $n \geq 1$, we have
\begin{align*}
    \bigabs{\int_{t_n}^{t_n+\delta} f'(t) dt} &= \bigabs{ \delta f'(t_n) + \int_{t_n}^{t_n+\delta} (f'(t) - f'(t_n)) dt}, \\
    &\geq \delta \abs{f'(t_n)} - \int_{t_n}^{t_n+\delta} \abs{f'(t)-f'(t_n)} dt, \\
    &> \delta 2 \sqrt{\varepsilon L} - L \int_{t_n}^{t_n+\delta} \abs{t_n-t} dt, \\
    &= \delta 2\sqrt{\varepsilon L} - \frac{L}{2}\delta^2, \\
    &= 2\varepsilon.
\end{align*}
This lower bound implies that for any $n \geq 1$
\begin{align*}
    \abs{ f(t_n + \delta) - f(t_n) } &= \bigabs{\int_{t_n}^{t_n+\delta} f'(t) dt} > 2\varepsilon.
\end{align*}
This bound implies
\begin{align*}
    2 \varepsilon &< \limsup_{n \rightarrow \infty} \abs{f(t_n + \delta) - f(t_n)},\\
    &\leq \limsup_{t \rightarrow \infty} \abs{f(t+\delta) - f(t)},\\
    &\leq \limsup_{t \rightarrow \infty} \abs{f(t+\delta)-\alpha} + \limsup_{t \rightarrow \infty} \abs{f(t) - \alpha}, \\
    &\leq 2\varepsilon,
\end{align*}
which yields a contradiction.
\end{proof}

In adaptive control, a typical use of Barbalat's lemma
is to conclude (via deadzones) that the error signal tends to a small value. In the sequel, we will use 
Barbalat's lemma in conjunction with the
generalized Barbalat's lemma (Proposition~\ref{prop:generalized_barbalat})
to argue that both the error signal and the \emph{time derivative} of the error
signal are small. The time derivative of the error
signal can be written as a nominal term plus the 
error of the adaptive signal. By controlling
this quantity, we will be able to show that the
error of the adaptive signal is small as well, 
allowing us to prove approximate interpolation type results (Theorem~\ref{thm:ac_approx_interp}).

\paragraph{Sharpness of the bound} Proposition~\ref{prop:generalized_barbalat}
is sharp in the following sense.
Fix any $\varepsilon > 0$ and $\omega \in \R$,
and define $f(t) := \varepsilon \sin\left(\sqrt{\frac{\omega}{\varepsilon}} t\right)$.
Clearly $\limsup_{t \rightarrow \infty} \abs{f(t)} = \varepsilon$, and furthermore
\begin{align*}
    f'(t) = \sqrt{\varepsilon \omega} \cos\left(\sqrt{\frac{\omega}{\varepsilon}} t\right), \:\: f''(t) = - \omega \sin\left(\sqrt{\frac{\omega}{\varepsilon}}t\right).
\end{align*}
This shows that the smallest valid global
Lipschitz constant for $f'$ is $\omega$.
Furthermore,
\begin{equation*}
    \limsup_{t \rightarrow \infty} \abs{f'(t)} = \sqrt{\varepsilon \omega}.
\end{equation*}

\section{Omitted proofs for Section~\ref{sec:np_rslts}}

\subsection{Proof of Theorem~\ref{thm:nonparametric_conv}}
We first state the following technical lemma.
\begin{mylemma}
\label{lem:existence}
Let $E$ denote the Banach space $E := \R^n \times \R^s \times L_2(\Theta, \nu)$ equipped with the norm $\norm{(x, e, \hat{\alpha})}_E := \max\{ \norm{x}_2, \norm{e}_2, \norm{\hat{\alpha}}_{L_2(\Theta, \nu)} \}$. Write $z = (x, e, \hat{\alpha})$ for $z \in E$ and define the function $F : E \times \R_{\geq 0} \rightarrow E$ as:
\begin{align*}
    F(z, t) := \begin{bmatrix}
    f(x, t) + g(x, t)\left( \int_\Theta \Phi(x, \theta) \hat{\alpha}(\theta) d\nu(\theta) - h(x) \right) \\
    f_e(e, t) + g_e(x, t)\left( \int_\Theta \Phi(x, \theta) \hat{\alpha}(\theta) d\nu(\theta) - h(x)\right) \\
    -\gamma \Phi(x, \cdot)^\T g_e(x, t)^\T \nabla Q(e, t) 
    \end{bmatrix}.
\end{align*}
Then, under Assumption \ref{assmp:second_moment_bound}, $F(z, t)$ is locally Lipschitz in $z$ with respect to $\norm{\cdot}_E$. That is, for each $R > 0$ and $T > 0$,
letting $B_E(R) := \{ z \in E : \norm{z}_E \leq R\}$,
\begin{equation*}
    \sup_{t\in [0,T]} \sup_{z_1,z_2 \in B_E(R)} \frac{\norm{F(z_1, t) - F(z_2, t)}_E}{\norm{z_1 - z_2}_E} < \infty.
\end{equation*}
\end{mylemma}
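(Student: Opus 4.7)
The strategy is to show each of the three components of $F$ is locally Lipschitz in $z = (x, e, \hat{\alpha})$ uniformly on $t \in [0, T]$, and then combine them. The only genuinely new piece beyond routine applications of Proposition~\ref{prop:local_composition} is handling the infinite-dimensional coordinate $\hat\alpha \in L_2(\Theta,\nu)$ that couples into the other two components through the integral $I(x, \hat{\alpha}) := \int_\Theta \Phi(x, \theta)\hat{\alpha}(\theta)\,d\nu(\theta)$ and, conversely, the fact that the third component of $F$ is itself $L_2(\Theta, \nu)$-valued.

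\textbf{Step 1: The map $I(x, \hat{\alpha})$ is locally Lipschitz from $\R^n \times L_2(\Theta, \nu)$ into $\R^d$.} Fix $R, T > 0$ and take $\norm{(x_i, \hat{\alpha}_i)}_E \leq R$. Adding and subtracting $\int_\Theta \Phi(x_2, \theta)\hat{\alpha}_1(\theta)\,d\nu(\theta)$ and applying Cauchy--Schwarz in $L_2(\Theta, \nu)$ gives
\begin{align*}
\norm{I(x_1, \hat{\alpha}_1) - I(x_2, \hat{\alpha}_2)}_2
&\leq \left(\int_\Theta \opnorm{\Phi(x_1,\theta)-\Phi(x_2,\theta)}^2 d\nu(\theta)\right)^{1/2} \norm{\hat{\alpha}_1}_{L_2(\Theta,\nu)} \\
&\quad + \left(\int_\Theta \opnorm{\Phi(x_2,\theta)}^2 d\nu(\theta)\right)^{1/2} \norm{\hat{\alpha}_1 - \hat{\alpha}_2}_{L_2(\Theta,\nu)}.
\end{align*}
Assumption~\ref{assmp:second_moment_bound} bounds the first integral by $C_R^2 \norm{x_1-x_2}_2^2$ and the second uniformly on $\{\norm{x}_2 \leq R\}$, and $\norm{\hat\alpha_1}_{L_2} \leq R$, so both terms are controlled by a constant multiple of $\norm{(x_1,\hat\alpha_1) - (x_2,\hat\alpha_2)}_E$. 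Local boundedness of $I$ follows from Cauchy--Schwarz alone.

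\textbf{Step 2: The first two components.} Assumption~\ref{assmp:basic} makes $f, g, h$ locally Lipschitz and locally bounded in $x$ uniformly in $t$, and Assumption~\ref{assmp:error} gives the same for $f_e, g_e$. Combining Step~1 with Proposition~\ref{prop:local_composition} (sums and products of locally Lipschitz, locally bounded maps remain so), the $\R^n$-valued component $f(x,t) + g(x,t)(I(x,\hat\alpha) - h(x))$ is locally Lipschitz in $z$ uniformly on $[0,T]$, and likewise for the $\R^s$-valued component (which additionally adds the term $f_e(e, t)$ that depends on $e$ in an already Lipschitz manner).

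\textbf{Step 3: The $L_2$-valued component.} For the third row, we must bound $\norm{\Phi(x_1, \cdot)^\T v_1 - \Phi(x_2, \cdot)^\T v_2}_{L_2(\Theta,\nu)}$ where $v_i := g_e(x_i,t)^\T \nabla Q(e_i, t) \in \R^d$. Writing $\Phi_i := \Phi(x_i, \cdot)$ and splitting,
\begin{align*}
\norm{\Phi_1^\T v_1 - \Phi_2^\T v_2}_{L_2(\Theta,\nu)}
\leq \left(\int_\Theta \opnorm{\Phi_1-\Phi_2}^2 d\nu\right)^{1/2}\!\norm{v_1}_2
+ \left(\int_\Theta \opnorm{\Phi_2}^2 d\nu\right)^{1/2}\!\norm{v_1 - v_2}_2,
\end{align*}
where the inequality uses that the integrand is pointwise bounded by $\opnorm{\Phi(x,\theta)}\norm{v}_2$ for any $v$. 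Assumption~\ref{assmp:second_moment_bound} handles the integral factors exactly as in Step~1; the map $(x,e,t) \mapsto g_e(x,t)^\T \nabla Q(e,t)$ is locally Lipschitz in $(x, e)$ uniformly in $t \in [0, T]$ and locally bounded by Assumptions~\ref{assmp:error} and~\ref{assmp:lyapunov}(i)--(ii) combined with Proposition~\ref{prop:local_composition}, so $\norm{v_1 - v_2}_2 \lesssim \norm{(x_1,e_1) - (x_2,e_2)}_E$ and $\norm{v_1}_2$ is bounded on $B_E(R)$.

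\textbf{Main obstacle.} The only non-routine point is Step~1 (and its reuse in Step~3): showing that $(x, \hat\alpha) \mapsto I(x, \hat\alpha)$ is locally Lipschitz from the mixed finite/infinite-dimensional space into $\R^d$. Once the $L_2$ Cauchy--Schwarz splitting above is set up, the two hypotheses in Assumption~\ref{assmp:second_moment_bound} (pointwise second-moment boundedness and $L_2$-uniform Lipschitz dependence of $\Phi$ on $x$) are precisely what is needed. Everything else reduces to finite-dimensional local Lipschitz composition via Proposition~\ref{prop:local_composition}, and the conclusion assembles by taking the maximum over the three coordinates of $F$.
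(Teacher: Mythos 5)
Your proposal is correct and follows essentially the same route as the paper's proof: isolate the two maps involving $\Phi$ (the integral $\int_\Theta \Phi(x,\theta)\hat\alpha(\theta)\,d\nu(\theta)$ and $\Phi(x,\cdot)^\T$ acting on $g_e^\T\nabla Q$), control them via Cauchy--Schwarz together with the two bounds in Assumption~\ref{assmp:second_moment_bound}, and assemble everything with the composition rules of Proposition~\ref{prop:local_composition} and the regularity in Assumptions~\ref{assmp:basic},~\ref{assmp:error}, and~\ref{assmp:lyapunov}. The only cosmetic difference is that the paper packages your Step~3 estimate as local Lipschitzness of $z \mapsto \Phi(x,\cdot)^\T$ viewed as an $\mathcal{L}(\R^{d},L_2(\Theta,\nu))$-valued map and then invokes the product rule, which is the same inequality you derive by hand.
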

\begin{proof}
By the composition rules for locally Lipschitz functions (cf. Proposition~\ref{prop:local_composition}),
it suffices to show that 
the functions $\psi_1 : E \rightarrow \R^d$
and $\psi_2 : E \rightarrow \mathcal{L}(\R^{d_1}, L_2(\Theta, \nu))$ defined by
\begin{align*}
    \psi_1((x, e, \hat{\alpha})) &:= \int_\Theta \Phi(x, \theta) \hat{\alpha}(\theta) d\nu(\theta), \\
    \psi_2((x, e, \hat{\alpha}))(q) &:= \Phi(x, \cdot)^\T q \:\: \forall q \in \R^{d_1}.  
\end{align*}
are locally Lipschitz and locally bounded. We view both $\psi_1$ and $\psi_2$ as functions defined on $E$, consistent with their appearance in the definition of $F(z, t)$; however, clearly $\psi_1$ is independent of $e$ and $\psi_2$ is independent of both $e$ and $\hat{\alpha}$.

Because $\psi_1$ and $\psi_2$ do not depend on time $t$,
locally Lipschitz implies locally bounded.
We first show that $\psi_1$ is locally Lipschitz.
Fix an $R > 0$ and let $z_1 = (x_1, e_1, \hat{\alpha}_1)$, $z_2 = (x_2, e_2, \hat{\alpha}_2)$ be contained in $B_E(R)$.
By Assumption~\ref{assmp:second_moment_bound},
there exists a $C = C(R) > 0$ such that the following conditions hold:
\begin{align*}
    \sup_{x \in B_2^n(R)} \int_\Theta \opnorm{\Phi(x, \theta)}^2 d\nu(\theta) &\leq C^2, \\
    \int_\Theta \opnorm{\Phi(x_1, \theta) - \Phi(x_2, \theta)}^2 d\nu(\theta) &\leq C^2 \norm{x_1 - x_2}_2^2 \:\:\forall x_1,x_2 \in B_2^n(R).
\end{align*}
By the triangle inequality and Cauchy-Schwarz,
\begin{align*}
    &\norm{ \psi_1(z_1) - \psi_1(z_2) }_2 \\
    &\leq \sqrt{\int_\Theta \opnorm{\Phi(x_1, \theta) - \Phi(x_2, \theta)}^2 d\nu(\theta)} \sqrt{ \int_\Theta \norm{\hat{\alpha}_1(\theta)}_2^2 d\nu(\theta)} \\
    &\qquad + \sqrt{\int_\Theta \opnorm{\Phi(x_1, \theta)}^2 d\nu(\theta)} \sqrt{ \int_\Theta \norm{\hat{\alpha}_1(\theta) - \hat{\alpha}_2(\theta)}_2^2 d\nu(\theta) } \\
    &\leq CR \norm{x_1-x_2}_2 + C\norm{\alpha_1 - \alpha_2}_{L_2(\Theta, \nu)} \\
    &\leq C(1+R) \norm{z_1-z_2}_E.
\end{align*}
This shows that $\psi_1$ is locally Lipschitz.
To show that $\psi_2$ is locally Lipschitz, by Cauchy-Schwarz,
\begin{align*}
    \norm{\psi_2(z_1) - \psi_2(z_2)}_{\calL(\R^{d_1}, L_2(\Theta, \nu))} &= \sup_{\norm{q}_2 = 1} \norm{ (\Phi(x_1, \cdot) - \Phi(x_2, \cdot))^\T q }_{L_2(\Theta, \nu)} \\
    &= \sup_{\norm{q}_2 = 1} \left( \int_\Theta \norm{ (\Phi(x_1, \theta) - \Phi(x_2, \theta))^\T q}_2^2 d\nu(\theta) \right)^{1/2} \\
    &\leq \left( \int_\Theta \opnorm{\Phi(x_1, \theta) - \Phi(x_2, \theta)}^2 d\nu(\theta) \right)^{1/2} \\
    &\leq C \norm{x_1 - x_2}_2 \leq C \norm{z_1 - z_2}_E.
\end{align*}
\end{proof}

We now require the following result concerned with existence and uniqueness of solutions to ordinary differential equations defined on Banach spaces. This result will be used in conjunction with Lemma~\ref{lem:existence} to assert the existence of our nonparametric input over an interval of time. Via a Lyapunov argument, we can then extend the interval to infinity.
\begin{myprop}[Existence of a maximal solution (see e.g., Proposition 11.8 of \citet{driver04book})]
\label{prop:banach_existence}
Let $E$ be a Banach space, $U$ be an open subset of $E$, $T \subseteq \R$ be an interval of time containing $0$, and $F: U\times T \rightarrow E$ be a continuous vector field on $E$. Assume that $F$ is locally Lipschitz in the following sense.
For every $x_0 \in U$ and compact $I \subseteq T$,
there exists finite positive $L = L(x_0, I)$ and $R = R(x_0, I)$
such that:
\begin{align*}
    \sup_{t \in I} \norm{f(x, t) - f(y, t)}_E \leq \norm{x - y}_E \:\: \forall x, y \in B_E(x_0, R).
\end{align*}
Then for each $x_0 \in U$, there exists a maximal interval $I(x_0) = (a(x_0), b(x_0)) \subseteq T$ with $a(x_0) \in [-\infty, 0)$ and
$b(x_0) \in (0, +\infty]$ such that the ordinary differential equation
\begin{equation*}
    \dot{x}(t) = F(x(t), t),\:\:\: x(0) = x_0
\end{equation*}
has a unique continuously differentiable solution $x: I(x_0) \rightarrow U$.
\end{myprop}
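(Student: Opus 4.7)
The plan is to establish this as a Banach-space version of the classical Picard--Lindel{\"o}f theorem: local existence follows from the Banach fixed point theorem applied to the integral form of the ODE, uniqueness follows from the local Lipschitz property via a Gr{\"o}nwall argument, and extension to the maximal interval is obtained by gluing uniquely defined solutions on overlapping intervals.

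For local existence near $x_0 \in U$ and $t=0$, I would first fix a compact time subinterval $I \subseteq T$ containing $0$ and invoke the hypothesis to obtain $R_0 > 0$ and $L > 0$ such that $\overline{B_E(x_0, R_0)} \subseteq U$ and $F(\cdot, t)$ is $L$-Lipschitz on this ball uniformly for $t \in I$. Local Lipschitzness in $x$ combined with continuity of $t \mapsto F(x_0, t)$ on $I$ yields a finite uniform bound $M := \sup_{t \in I,\ x \in \overline{B_E(x_0, R_0)}} \norm{F(x,t)}_E$. Now choose $\tau > 0$ small enough that $[0,\tau] \subseteq I$, $M\tau \leq R_0$, and $L\tau < 1$. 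On the complete metric space $\calX_\tau := \{x \in C([0,\tau], E) : \sup_{t \in [0,\tau]} \norm{x(t) - x_0}_E \leq R_0\}$ equipped with the sup norm, define the Picard operator by $(\Phi x)(t) := x_0 + \int_0^t F(x(s), s)\, ds$, where the integral is a Bochner integral in $E$. The bound $M\tau \leq R_0$ ensures $\Phi$ maps $\calX_\tau$ into itself; $L\tau < 1$ makes $\Phi$ a contraction. The Banach fixed point theorem then produces a unique fixed point $x \in \calX_\tau$, which the fundamental theorem of calculus shows is continuously differentiable and satisfies the ODE on $[0, \tau]$. An analogous construction yields a local solution on $[-\tau, 0]$.

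For uniqueness on any overlap, suppose $x_1, x_2$ are $C^1$ solutions on a common interval $[0, T')$. On any compact $[0, T''] \subseteq [0, T')$, the graphs of both solutions lie in a single bounded ball around $x_0$ on which $F$ is Lipschitz with some constant $L'$, hence $\norm{x_1(t) - x_2(t)}_E \leq L' \int_0^t \norm{x_1(s) - x_2(s)}_E\, ds$ and Gr{\"o}nwall's inequality gives $x_1 \equiv x_2$ on $[0, T'']$; letting $T'' \uparrow T'$ yields equality on $[0, T')$. To construct the maximal interval, set $b(x_0) := \sup\{T' > 0 : \text{a $C^1$ solution exists on } [0, T')\} \in (0, +\infty]$ (and define $a(x_0)$ symmetrically). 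Uniqueness implies that any two witnessing solutions coincide on their common domain, so they glue into a single $C^1$ solution on $[0, b(x_0))$; maximality is automatic by construction, and the same argument on the left gives the full interval $(a(x_0), b(x_0))$.

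The main obstacle, and the reason the statement is not entirely routine in the stated generality, is that $E$ is an arbitrary Banach space, so closed bounded sets need not be compact and Peano-type existence arguments via Ascoli--Arzel{\`a} are unavailable; the local Lipschitz hypothesis is precisely what replaces compactness by a contraction argument. A subsidiary subtlety is that the hypothesis only asserts Lipschitzness in $x$ uniformly in $t$ on compact subintervals, with no a priori bound on $F$, so one must combine local Lipschitzness with continuity of $t \mapsto F(x_0, t)$ to extract the uniform bound $M$ needed to calibrate $\tau$. Finally, care is needed that the intersection of hypotheses at the join point in the gluing step produces a genuinely $C^1$ (not merely continuous) solution, which follows because the left and right derivatives both equal $F(x(t), t)$ by the ODE.
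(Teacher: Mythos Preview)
The paper does not give a proof of this proposition; it is stated without argument and attributed to Proposition~11.8 of \citet{driver04book}. Your proposal is the standard Picard--Lindel{\"o}f argument in Banach spaces and is essentially correct.

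One point in your uniqueness step deserves more care. You assert that on a compact $[0,T'']$ the graphs of both solutions lie in a single bounded ball around $x_0$ on which $F$ is Lipschitz with some constant $L'$. The hypothesis, however, only furnishes a Lipschitz estimate on a ball of some radius $R = R(x_0, I)$ around each basepoint, and this radius is not at your disposal; there is no guarantee that a single such ball contains the entire compact image $x_1([0,T'']) \cup x_2([0,T''])$. You can recover a uniform Lipschitz constant along a tube by covering this compact image with finitely many of the guaranteed balls and chaining, but this is fiddly. The cleaner and standard route is a continuation argument: set $t_* := \sup\{t \in [0,T''] : x_1 \equiv x_2 \text{ on } [0,t]\}$, note $x_1(t_*) = x_2(t_*)$ by continuity, and then reapply your local existence and uniqueness construction with basepoint $x_1(t_*)$ and initial time $t_*$ to obtain agreement on $[t_*, t_* + \delta]$ for some $\delta > 0$, forcing $t_* = T''$. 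With this fix, the argument goes through.
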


We may now state our proof of the main nonparametric theorem.
\npconv*
\begin{proof}
By Assumption~\ref{assmp:rf_kernel}, there exists a signed density $\alpha(\theta) \in L_2(\Theta, \nu)$ such that
\begin{equation*}
    h(\cdot) = \int_{\Theta}\Phi(\cdot, \theta)\alpha(\theta)d\nu(\theta), \:\: \norm{h}_{\calH}^2 = \norm{\alpha}^2_{L_2(\Theta, \nu)}.
\end{equation*}
Define the signed density $\hat{\alpha} : \Theta\times\R_{\geq 0}\rightarrow\R^{d_1}$ by
$\hat{\alpha}(\cdot, 0) = 0$ and the pointwise update for $\theta \in \Theta$,
\begin{equation*}
    \frac{\partial\hat{\alpha}}{\partial t}(\theta, t) = -\gamma \Phi(x(t), \theta)^\T g_e(x(t), t)^\T \nabla Q(e(t), t).
\end{equation*}
Observe that by Lemma~\ref{lem:existence} and Proposition~\ref{prop:banach_existence}, there exists some maximal $T_{\max} \in (0, \infty]$ such that the curve $t\mapsto (x(t), e(t), \hat{\alpha}(t))$ exists, is unique, and is continuously differentiable. Moreover, we may write the input as
\begin{align*}
    u(x, t) = \int_\Theta \Phi(x, \theta)\hat{\alpha}(\theta, t) d\nu(\theta).
\end{align*}
By means of contradiction, let us suppose that $T_{\max} < \infty$. For $t \in [0, T_{\max})$, define $\tilde{\alpha}(\theta, t) := \hat{\alpha}(\theta, t) - \alpha(\theta)$ so that
\begin{equation*}
    u(\cdot, t) - h(\cdot) = \int_{\Theta}\Phi(\cdot, \theta)\tilde{\alpha}(\theta, t)d\nu(\theta).
\end{equation*}
Now consider the Lyapunov-like function $V : [0, T_{\max}) \rightarrow \R$,
\begin{equation*}
    V(t) = Q(e(t), t) + \frac{1}{2\gamma}\norm{\tilde{\alpha}(\cdot, t)}_{L_2(\Theta, \nu)}^2.
\end{equation*}
We note that because $L_2(\Theta, \nu)$ is a real Hilbert space,
the map $u \mapsto \norm{u}_{L_2(\Theta,\nu)}^2$ is (Fr{\'{e}}chet) differentiable with derivative $h \mapsto 2 \ip{u}{h}_{L_2(\Theta, \nu)}$ Therefore, by the differentiability of the curve $t \mapsto \tilde{\alpha}(\cdot, t)$ and the chain rule, we have:
\begin{align*}
    \frac{d}{dt} \int_\Theta \norm{\tilde{\alpha}(\theta, t)}^2_2 d\nu(\theta) = 2\bigip{\tilde{\alpha}(\cdot, t)}{\frac{\partial \hat{\alpha}}{\partial t}(\cdot, t)}_{L_2(\Theta, \nu)}.
\end{align*}

Computing the time derivative, for any $t \in [0, T_{\max})$,
\begin{align*}
    \dot{V}(t) &= \frac{\partial Q}{\partial t}(e(t), t) + \nabla Q(e(t), t)^\T \left(f_e(e(t), t) + g_e(x(t), t)\left(u(x(t), t) - h(x(t))\right)\right) \\
    &\qquad + \frac{1}{\gamma}\bigip{\tilde{\alpha}(\cdot, t)}{\frac{\partial \hat{\alpha}}{\partial t}(\cdot, t)}_{L_2(\Theta, \nu)},\\
    &\leq -\rho\left(\norm{e(t)}_2\right) + \nabla Q(e(t), t)^\T g_e(e(t), t)\left(u(x(t), t) - h(x(t))\right) \\
    &\qquad + \frac{1}{\gamma}\bigip{\tilde{\alpha}(\cdot, t)}{\frac{\partial \hat{\alpha}}{\partial t}(\cdot, t)}_{L_2(\Theta, \nu)},
\end{align*}
where we have applied Assumption~\ref{assmp:lyapunov}. Now, observe that
\begin{align*}
    \bigip{\tilde{\alpha}(\cdot, t)}{\frac{\partial \hat{\alpha}}{\partial t}(\cdot, t)}_{L_2(\Theta, \nu)} = -\gamma \int_{\Theta}\ip{\tilde{\alpha}(\theta, t)}{\Phi(x(t), \theta)^\T g_e(x(t), t) \nabla Q(e(t), t)}d\nu(\theta)
\end{align*}
so that the last two terms in $\dot{V}(t)$ cancel, and hence:
\begin{equation*}
    \dot{V}(t) \leq -\rho(\norm{e(t)}_2).
\end{equation*}
Now, because $\dot{V}(t) \leq 0$ for all $t \in [0, T_{\max})$, 
$V(t) \leq V(0)$. 
Therefore, since $Q(e(t), t) \geq \mu_1(\norm{e(t)}_2)$
and $\mu_1$ is a class $\calK_\infty$ function,
\begin{align*}
    \sup_{t\in [0, T_{\max})}\norm{e(t)}_2 < \infty, \:\: \sup_{t\in [0, T_{\max})}\norm{\hat{\alpha}(\cdot, t)}_{L_2(\Theta, \nu)} < \infty.
\end{align*}
Furthermore, $\sup_{t \in [0, T_{\max})} \norm{x(t)-x_d(t)}_2 < \infty$
by requirement \eqref{eq:bounded_error_bounded_x} on the error signal,
and since $x_d$ is uniformly bounded, we also have that
$\sup_{t \in [0, T_{\max})} \norm{x(t)}_2 < \infty$.
This contradicts that $T_{\max}$ is finite, so we conclude that $T_{\max} = \infty$. This implies that 
\begin{align*}
    \sup_{t\geq 0} \max\{\norm{x(t)}_2, \norm{e(t)}_2, \norm{\tilde{\alpha}(\cdot, t)}_{L_2(\Theta,\nu)}\} < \infty,
\end{align*} 
so that $u(\cdot, t) \in \calH$ for all $t \geq 0$. This proves the first two claims. Now, integrating both sides of $\dot{V}(t)$,
\begin{equation*}
    \int_0^\infty \rho\left(\norm{e(t)}_2\right)dt \leq V(0).
\end{equation*}
To complete the proof, we now need to show that $t \mapsto \rho(\norm{e(t)}_2)$ is uniformly
continuous on $[0, \infty)$ and apply Barbalat's lemma.

We first show that $e(t)$ is uniformly Lipschitz in $t$. 
To do so, we bound $\sup_{t \geq 0} \norm{\dot{e}(t)}_2$ and apply $\norm{e(t_1) - e(t_2)}_2 \leq  \sup_{t \geq 0} \norm{\dot{e}(t)}_2 \abs{t_1 - t_2}$.
Let $C_\Phi := \sup_{t \geq 0} \left( \int_\Theta \norm{\Phi(x(t), \theta)}_2^2 d\nu(\theta) \right)^{1/2}$.
Because $x(t)$ is uniformly bounded, $C_\Phi$ is finite by Assumption~\ref{assmp:second_moment_bound}.
Next,
\begin{align*}
    \norm{ u(x(t), t) - h(x(t)) }_2 
    \leq \left( \int_{\Theta} \opnorm{\Phi(x(t), \theta)}^2 d\nu(\theta) \right)^{1/2} \norm{\tilde{\alpha}(\cdot, t)}_{L_2(\Theta, \nu)} 
    \leq C_\Phi \sqrt{2\gamma V(0)}. 
\end{align*}
Now, observe that
\begin{align*}
    \norm{\dot{e}(t)}_2 &\leq \norm{f_e(e(t), t)}_2 + \opnorm{g_e(x(t), t)} \norm{ u(x(t), t) - h(x(t))}_2 \\
    &\leq \norm{f_e(e(t), t)}_2 + \opnorm{g_e(x(t), t)} C_\Phi \sqrt{2\gamma V(0)}.
\end{align*}
Because both $f_e$ and $g_e$ are locally bounded in $x$ uniformly in $t$, $\norm{\dot{e}(t)}_2$ is uniformly bounded in $t$. 
Therefore, $t \mapsto \norm{e(t)}_2$ is uniformly Lipschitz and 
$t \mapsto \norm{e(t)}_2$ is uniformly continuous.
Now, because $\rho$ is continuous, it is uniformly continuous on
the range of $t \mapsto \norm{e(t)}_2$. Since the composition of two uniformly continuous functions remains uniformly continuous, 
$t \mapsto \rho(\norm{e(t)})$ is uniformly continuous.
By Barbalat's lemma, this implies that $\lim_{t \rightarrow \infty} \rho( \norm{e(t)}_2 ) = 0$. By continuity of $\rho$ and the fact that $\rho(a) = 0$ if and only if $a = 0$, we conclude that $\lim_{t \rightarrow \infty} \norm{e(t)}_2 = 0$.
From the requirement \eqref{eq:error_signal_asymp}
on the error signal, we conclude that $\lim_{t \rightarrow \infty} \norm{x(t)-x_d(t)}_2$.
\end{proof}

\subsection{Proof of Theorem~\ref{thm:interpolation}}

\interpolation*
\begin{proof}
Recall that the error dynamics satisfy:
\begin{align*}
    \dot{e}(t) = f_e(e(t), t) + g_e(x(t), t)(u(x(t), t) - h(x(t))).
\end{align*}
From the proof of Theorem~\ref{thm:nonparametric_conv},
$\lim_{t \rightarrow \infty} e(t) = 0$.
If we show in addition that $t \mapsto \dot{e}(t)$ is uniformly Lipschitz,
then by Barbalat's lemma (applied to each coordinate), 
$\lim_{t \rightarrow \infty} \dot{e}(t) = 0$.
Since $f_e(0, t) = 0$ and $f_e$ is locally Lipschitz in $e$ uniformly in $t$,
$\lim_{t \rightarrow \infty} \dot{e}(t) = 0$ implies
that $\lim_{t \rightarrow \infty} \norm{g_e(x(t), t)(u(x(t), t) - h(x(t)))}_2 = 0$.

It remains to show that $t \mapsto \dot{e}(t)$ is uniformly Lipschitz.
By the composition rule (cf. Proposition~\ref{prop:global_lipschitz_composition}),
it suffices to show that the functions:
\begin{align*}
    t \mapsto f_e(e(t), t), \:\: t \mapsto g_e(x(t), t), \:\: t \mapsto u(x(t), t), \:\: t \mapsto h(x(t)),
\end{align*}
are all uniformly Lipschitz and bounded.
From the proof of Theorem~\ref{thm:nonparametric_conv}, 
both $t \mapsto e(t)$ and $t \mapsto x(t)$ are uniformly bounded, and
$t \mapsto e(t)$ is uniformly Lipschitz. A nearly identical argument shows that
$t \mapsto x(t)$ is also uniformly Lipschitz.
Therefore, since $f_e$, $g_e$, and $h$ are all locally Lipschitz and locally bounded uniformly in $t$,
it is clear that $t \mapsto f_e(e(t), t)$,
$t \mapsto g_e(x(t), t)$, and $t \mapsto h(x(t))$ are all uniformly Lipschitz and uniformly bounded.

To see that $t \mapsto u(x(t), t)$ is also uniformly Lipschitz, we first 
choose a finite constant $C > 0$ such that
\begin{align*}
    \sup_{t \geq 0} \max\{ \norm{x(t)}_2, \opnorm{g_e(x(t), t)}, \norm{\nabla Q(e(t), t)}_2 \} \leq C.
\end{align*}
Now observe that
for every $\theta$ and $t$,
\begin{align*}
    \bignorm{ \frac{\partial \hat{\alpha}}{\partial t}(\theta, t) }_2 &= \gamma \bignorm{\Phi(x(t), \theta)^\T g_e(x(t), t)^\T \nabla Q(e(t), t) }_2 \\
    &\leq \gamma \opnorm{\Phi(x(t), \theta)} \opnorm{g_e(x(t), t)} \norm{\nabla Q(e(t), t)}_2 \\
    &\leq \gamma C^2 \opnorm{\Phi(x(t), \theta)}. 
\end{align*}
Put $C_\Phi := \left( \int_\Theta \sup_{\norm{x}_2 \leq C} \opnorm{\Phi(x, \theta)}^2 d\nu(\theta) \right)^{1/2}$, which is finite by assumption.
Fix $t_1, t_2$, and for $i \in \{1, 2\}$ define:
\begin{align*}
    u_i := u(x(t_i), t_i), \:\:
    \Phi_i(\cdot) := \Phi(x(t_i), \cdot), \:\:
    \hat{\alpha}_i(\cdot) := \hat{\alpha}(\cdot, t_i).
\end{align*}
We have:
\begin{align*}
    \norm{ \hat{\alpha}_1 - \hat{\alpha}_2 }_{L_2(\Theta, \nu)} &=
    \left( \int_\Theta \norm{\hat{\alpha}(\theta, t_1) - \hat{\alpha}(\theta, t_2)}_2^2 d\nu(\theta) \right)^{1/2} \\
    &\leq \left( \int_\Theta \bignorm{\int_{t_1}^{t_2} \frac{\partial \hat{\alpha}}{\partial t}(\theta, t) dt }^2_2 d\nu(\theta) \right)^{1/2} \\
    &\leq \left( \int_\Theta \left( \int_{t_1}^{t_2} \bignorm{\frac{\partial \hat{\alpha}}{\partial t}(\theta, t)}_2 dt \right)^2 d\nu(\theta) \right)^{1/2} \\
    &\leq \gamma C^2 \left( \int_\Theta \sup_{\norm{x}_2 \leq C} \opnorm{\Phi(x, \theta)}^2 d\nu(\theta) \right)^{1/2} \abs{t_1 - t_2} \\
    &\leq \gamma C^2 C_\Phi \abs{t_1-t_2}.
\end{align*}
Next, let
$C'_\Phi$ be a finite constant such that
\begin{align*}
    \left(\int_\Theta \opnorm{\Phi(x, \theta) - \Phi(y, \theta)}^2 d\nu(\theta)\right)^{1/2} \leq C'_\Phi \norm{x-y}_2 \:\:\forall x, y \in B_2^n(C). 
\end{align*}
Then,
\begin{align*}
    \norm{u_1-u_2}_{2} &\leq \int_\Theta \norm{\Phi_1(\theta)\hat{\alpha}_1(\theta) - \Phi_2(\theta)\hat{\alpha}_2(\theta)}_2 d\nu(\theta) \\
    &\leq \int_\Theta \opnorm{\Phi_1(\theta) - \Phi_2(\theta)}\norm{\hat{\alpha}_1(\theta)}_2 d\nu(\theta) + \int_\Theta \opnorm{\Phi_2(\theta)} \norm{\hat{\alpha}_1(\theta) - \hat{\alpha}_2(\theta)}_2 d\nu(\theta) \\
    &\leq C \left( \int_\Theta \opnorm{\Phi(x(t_1), \theta) - \Phi(x(t_2),\theta)}^2 d\nu(\theta)\right)^{1/2} + C_\Phi \norm{\hat{\alpha}_1 - \hat{\alpha}_2}_{L_2(\Theta, \nu)} \\
    &\leq C C'_\Phi \norm{x(t_1) - x(t_2)}_2 + \gamma C^2 C^2_\Phi \abs{t_1-t_2} \\
    &\leq (C^2 C'_\Phi + \gamma C^2 C^2_\Phi) \abs{t_1-t_2}.
\end{align*}
This shows that $t \mapsto u(x(t), t)$ is uniformly Lipschitz.
To conclude, we argue that $t \mapsto u(x(t), t)$ is uniformly bounded:
\begin{align*}
    \norm{ u(x(t), t) }_2 &\leq \int_\Theta \opnorm{ \Phi(x(t), \theta) } \norm{\hat{\alpha}(t)}_2 d\nu(\theta) \\
    &\leq \sqrt{\int_\Theta \opnorm{\Phi(x(t), \theta)}^2 d\nu(\theta)} \norm{\hat{\alpha}(t)}_2 \\
    &\leq C_\Phi \norm{\hat{\alpha}(t)}_2.
\end{align*}
The proof of Theorem~\ref{thm:nonparametric_conv} 
shows that $\norm{\tilde{\alpha}(t)}_2$ is uniformly bounded,
and therefore so is $\norm{\hat{\alpha}(t)}_2$ by the triangle inequality.
\end{proof}

\subsection{Proof of Theorem~\ref{thm:imp_reg}}
\impreg*
\begin{proof}
From Theorem~\ref{thm:nonparametric_conv}, $u(\cdot, t) \in \calH$ for all $t \geq 0$.
Let $\bar{h}(\cdot) \in \calH$ be arbitrary. Then by 
Assumption~\ref{assmp:rf_kernel}
there exists $\bar{\alpha} \in L_2(\Theta, \nu)$ such that 
\begin{equation*}
    \bar{h}(x) = \int_\Theta \Phi(x, \theta)\bar{\alpha}(\theta)d\nu(\theta).
\end{equation*} 
Consider the Lyapunov-like function $V: \R_{\geq 0}\rightarrow \R$,
\begin{equation*}
    V(t) = \frac{1}{2}\norm{u(\cdot, t) - \bar{h}}_{\calH}^2 = \frac{1}{2}\norm{\hat{\alpha}(\cdot, t) - \bar{\alpha}}_{L_2(\Theta, \nu)}^2,
\end{equation*}
where $\hat{\alpha}(\cdot, t) \in L_2(\Theta, \nu)$ was defined in the proof of Theorem~\ref{thm:nonparametric_conv} by the partial differential equation 
\begin{equation*}
    \frac{\partial\hat{\alpha}}{\partial t}(\theta, t) = -\gamma \Phi(x(t), \theta)^\T g_e(x(t), t)^\T \nabla Q (e(t), t), \:\: \hat{\alpha}(\theta, 0) = 0.
\end{equation*}
Computing the time derivative of $V$,
\begin{align*}
    \dot{V}(t) = \bigip{\hat{\alpha}(\cdot, t) - \bar{\alpha}}{\frac{\partial \hat{\alpha}}{\partial t}(\cdot, t)}_{L_2(\Theta, \nu)}.
\end{align*}
Integrating both sides of the above from $0$ to $t$,
\begin{equation*}
    \frac{1}{2}\norm{\hat{\alpha}(\cdot, t) - \bar{\alpha}}_{L_2(\Theta, \nu)}^2 = \frac{1}{2}\norm{\bar{\alpha}}_{L_2(\Theta, \nu)}^2 + \int_0^t \bigip{\hat{\alpha}(\cdot, \tau)-\bar{\alpha}}{\frac{\partial \hat{\alpha}}{\partial t}(\cdot, \tau)}_{L_2(\Theta, \nu)}d\tau,
\end{equation*}
Define $\hat{\alpha}_{\infty}(\theta)$ to be the density such that $\lim_{t\rightarrow\infty} u(\cdot, t) = \int_{\Theta} \Phi(\cdot, \theta)\hat{\alpha}_{\infty}(\theta)d\nu(\theta)$. 
Taking the limit as $t\rightarrow\infty$ of both sides,
\begin{align}
    \frac{1}{2}\norm{\hat{\alpha}_{\infty} - \bar{\alpha}}_{L_2(\Theta, \nu)}^2 &= \lim_{t\rightarrow\infty}\frac{1}{2}\norm{\hat{\alpha}(\cdot, t) - \bar{\alpha}}_{L_2(\Theta, \nu)}^2 \nonumber \\
    &= \frac{1}{2}\norm{\bar{\alpha}}_{L_2(\Theta, \nu)}^2 + \int_0^\infty \bigip{\hat{\alpha}(\cdot, \tau)-\bar{\alpha}}{\frac{\partial \hat{\alpha}}{\partial t}(\cdot, \tau)}_{L_2(\Theta, \nu)}d\tau.     \label{eqn:imp_reg_intermediate}
\end{align}
Now take $\bar{h}(\cdot) \in \mathcal{A}$. Observe that, by definition of $\calA$,
for any $\tau \geq 0$,
\begin{align*}
    \bigip{\bar{\alpha}}{ \frac{\partial \hat{\alpha}}{\partial t}(\cdot, \tau)}_{L_2(\Theta, \nu)}  &= -\gamma\int_{\Theta}\bar{\alpha}(\theta)^\T \Phi(x(\tau), \theta)^\T g_e(x(\tau), t)^\T \nabla Q(e(\tau), \tau)\\
    &= -\gamma \bar{h}(x(\tau))^\T g_e(x(\tau), \tau)^\T \nabla Q(e(\tau), \tau)\\
    &= -\gamma h(x(\tau))^\T g_e(x(\tau), \tau)^\T \nabla Q(e(\tau), \tau).
\end{align*}
Hence, \eqref{eqn:imp_reg_intermediate} may be re-written,
\begin{align*}
    \frac{1}{2}\norm{\hat{\alpha}_\infty - \bar{\alpha}}_{L_2(\Theta, \nu)}^2 &= \frac{1}{2}\norm{\bar{\alpha}}_{L_2(\Theta, \nu)}^2 + \int_0^\infty \bigip{\hat{\alpha}(\cdot, \tau)}{\frac{\partial \hat{\alpha}}{\partial t}(\cdot, \tau)}_{L_2(\Theta, \nu)}d\tau\\
    &\qquad + \gamma\int_0^\infty h(x(\tau))^\T g(x(\tau), \tau)^\T \nabla Q (x(\tau), \tau)d\tau,
\end{align*}
which has eliminated the dependence of the right-hand side on $\bar{\alpha}$ except for in the first term.
Let $\bar{\calA} := \{ \bar{\alpha} \in L_2(\Theta, \nu) : h(\cdot) = \int_\Theta \Phi(\cdot, \theta) \bar{\alpha}(\theta) d\nu \in \calA \}$.
Since $\hat{\alpha}_\infty \in \bar{\calA}$ by assumption, taking
the arg min over both sides of the above equation,
\begin{align*}
    \hat{\alpha}_\infty \in \argmin_{\bar{\alpha} \in \bar{\calA}} \norm{\bar{\alpha}}_{L_2(\Theta, \nu)}.
\end{align*}
The claim now follows by the correspondence
between $L_2(\Theta,\nu)$ and $\calH$.
\end{proof}

\section{Omitted proofs for Section~\ref{sec:rf}}

\subsection{Proof of Proposition~\ref{prop:uniform_approx}}
\uniform*
\begin{proof}
We first define a truncated target function $h_\eta(x)$ and its truncated approximation $\hat{h}_\eta(x; \{\theta_i\}_{i=1}^{K})$
\begin{align*}
    h_\eta(x) &:= \int_{\Theta} \Phi_\eta(x, \theta) \alpha(\theta) \: d\nu(\theta), \\
    \hat{h}_\eta(x; \{\theta_i\}_{i=1}^{K}) &:= \frac{1}{K} \sum_{i=1}^{K} \Phi_\eta(x, \theta_i) \alpha(\theta_i).
\end{align*}
Clearly, for each $x \in \R^n$,
\begin{align*}
    \E_{\{\theta_i\}_{i=1}^{K}} \hat{h}_\eta(x; \{\theta_i\}_{i=1}^{K}) &= h_\eta(x).    
\end{align*}
Now, consider two sets $\{\theta_i\} \subseteq \Theta$ and $\{\tilde{\theta}_i\} \subseteq \Theta$ that differ in only one index $i$. Observe that
\begin{align*}
    \norm{ \hat{h}_\eta(\cdot; \{\theta_i\}_{i=1}^{K}) - \hat{h}_\eta(\cdot; \{\tilde{\theta}_i\}_{i=1}^{K})}_\infty \leq \frac{2 B_\Phi(\eta) B_h}{K}.
\end{align*}
Hence, by McDiarmid's inequality, with probability at least $1-\delta/2$,
\begin{align*}
    \norm{\hat{h}_\eta(\cdot; \{\theta_i\}_{i=1}^{K}) - h_\eta(\cdot)}_\infty &\leq \E \norm{\hat{h}_\eta(\cdot; \{\theta_i\}_{i=1}^{K}) - h_\eta(\cdot)}_\infty +  \sqrt{2} B_\Phi(\eta) B_h \sqrt{\frac{\log(2/\delta)}{K}} \\
    &\leq \frac{2}{K} \E \bignorm{\sum_{i=1}^{K} \varepsilon_i \Phi_\eta(\cdot; \theta_i) \alpha_i}_\infty + \sqrt{2} B_\Phi(\eta) B_h \sqrt{\frac{\log(2/\delta)}{K}},
\end{align*}
where the last inequality follows by a standard symmetrization argument.
Define the event $\calE$ as
\begin{align*}
    \calE := \left\{ \max_{i=1, ..., K} \sup_{x \in X} \opnorm{\Phi(x, \theta_i)} \leq B_\Phi(\eta) \right\}.
\end{align*}
By our assumption on $B_\Phi$ and a union bound, we have that $\Pr(\calE^c) \leq \delta/2$.
Furthermore, $\Phi(\cdot, \cdot)$ and $\Phi_\eta(\cdot, \cdot)$ agree on $\calE$ by definition, so that
\begin{align*}
    \ind\{\calE\}\bignorm{  \frac{1}{K} \sum_{i=1}^{K} \Phi(\cdot, \theta_i) \alpha_i - h(\cdot) }_\infty &=\ind\{\calE\}\bignorm{\frac{1}{K} \sum_{i=1}^{K} \Phi_\eta(\cdot, \theta_i) \alpha_i - h(\cdot) }_\infty \\
    &\leq \bignorm{\frac{1}{K} \sum_{i=1}^{K} \Phi_\eta(\cdot, \theta_i) \alpha_i - h_\eta(\cdot) }_\infty + \norm{ h(\cdot) - h_\eta(\cdot) }_\infty.
\end{align*}
We now focus on bounding the term on the right-hand side.
We write
\begin{align*}
    h(x) - h_\eta(x) = \int_{\Theta} \ind\{ \opnorm{\Phi(x, \theta)} > B_\Phi(\eta) \} \Phi(x, \theta) \alpha(\theta) d\nu(\theta).
\end{align*}
This implies the estimate
\begin{align*}
    \norm{h(\cdot) - h_\eta(\cdot)}_\infty &\leq \sup_{x \in X} B_h \E_{\theta \sim \nu} \ind\{ \opnorm{\Phi(x, \theta)} > B_\Phi(\eta) \} \opnorm{\Phi(x, \theta)} \\
    &\leq B_h \sqrt{\eta} \sqrt{ \sup_{x \in X} \E\opnorm{\Phi(x, \theta)}^2 }\\
    &= B_h \sqrt{\frac{\delta\sup_{x \in X} \E\opnorm{\Phi(x, \theta)}^2 }{2K}}.
\end{align*}
The claim now follows by a union bound.
\end{proof}

\subsection{Proof of Proposition~\ref{prop:rademacher_bound}}
To state the proof of the proposition, we will require the following useful result.
\begin{mylemma}[\cite{maurer16vectorcontraction}, Corollary 4]
\label{lem:mauer}
Let $\calX$ be any set, let $(x_1, \hdots, x_n)\in\calX^n$, let $\calF$ be a class of functions $f: \calX \rightarrow \ell_2$, and let $h_i : \ell_2 \rightarrow \R$ have Lipschitz constant $L$. Then,
\begin{equation*}
\E \sup_{f\in\calF}\sum_i \varepsilon_i h_i(f(x_i)) \leq \sqrt{2}L\E \sup_{f\in\calF} \sum_{i,k}\varepsilon_{i, k}f_k(x_i)
\end{equation*}
where the $\varepsilon_{ik}$ are i.i.d.\ Rademacher random variables and $f_k(x_i)$ is the $k^\text{th}$ component of $f(x_i)$.
\end{mylemma}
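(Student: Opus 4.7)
The plan is to combine the structural form of $\Phi$ given by Assumption~\ref{assmp:weights_biases} with Maurer's vector contraction (Lemma~\ref{lem:mauer}) to peel off the 1-Lipschitz scalar nonlinearity $\phi$ and reduce the Rademacher complexity to a linear quantity that admits a closed-form bound.

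First, I unpack the setup. Under Assumption~\ref{assmp:weights_biases}, $\Phi_\eta(x, \theta_i)\alpha(\theta_i) = \phi(w_i^\T x + b_i)\beta_i$ where $\beta_i := M_\eta(w_i)\alpha(\theta_i) \in \R^d$ obeys the uniform bound $\norm{\beta_i}_2 \leq B_\Phi(\eta) B_h$: the truncation built into $M_\eta$ enforces $\opnorm{M_\eta(w_i)} \leq B_\Phi(\eta)$, while $h \in \calF_2(B_h)$ forces $\norm{\alpha(\theta_i)}_2 \leq B_h$ $\nu$-a.e.

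Second, I apply Lemma~\ref{lem:mauer} to strip $\phi$. Using the duality $\norm{v}_2 = \sup_{\norm{u}_2 \leq 1} \ip{u}{v}$, I rewrite the target as $\E \sup_{(x, u)} \sum_i \varepsilon_i \phi(w_i^\T x + b_i)\ip{u}{\beta_i}$, and take the function class $\calF = \{F_{x,u}\}_{(x,u) \in X \times B_{\R^d}(1)}$ with the underlying linear map $F_{x,u}(\theta_i) := (w_i^\T x + b_i)\alpha(\theta_i) \in \R^{d_1} \subset \ell_2$. I pair this with scalar $h_i$'s, e.g.\ $h_i(y) := \phi(\ip{\alpha(\theta_i)/\norm{\alpha(\theta_i)}_2^2}{y})\,\ip{u}{M_\eta(w_i)\alpha(\theta_i)}$, designed so that $h_i(F_{x,u}(\theta_i)) = \phi(w_i^\T x + b_i)\ip{u}{\beta_i}$. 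A short calculation shows that the gradient of $h_i$ has norm at most $\opnorm{M_\eta(w_i)}\norm{u}_2 \leq B_\Phi(\eta)$, giving a Lipschitz constant $L = B_\Phi(\eta)$ uniformly in $i$; the dual variable $u$ is absorbed into the class parameterization rather than left as an outer sup, which is necessary because Lemma~\ref{lem:mauer} requires the $h_i$'s to be fixed across $\calF$.

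Third, Lemma~\ref{lem:mauer} yields the linear Rademacher complexity $\sqrt{2}\,B_\Phi(\eta)\,\E\sup_{x \in X} \sum_{i=1}^K \sum_{k=1}^{d_1} \varepsilon_{ik}\,\alpha_k(\theta_i)(w_i^\T x + b_i)$. Setting $\tilde{\varepsilon}_i := \sum_k \varepsilon_{ik}\alpha_k(\theta_i)$, which conditional on $\theta_i$ is mean zero with variance $\norm{\alpha(\theta_i)}_2^2 \leq B_h^2$, the inner sum splits as $x^\T V + S$ with $V := \sum_i \tilde{\varepsilon}_i w_i \in \R^n$ and $S := \sum_i \tilde{\varepsilon}_i b_i \in \R$. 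The $x$-dependent piece is bounded by $B_X\,\E\norm{V}_2 \leq B_h B_X\sqrt{K\,\E\norm{w_1}_2^2}$ via Jensen and Fubini. The constant piece $S$ is handled by a coordinatewise sub-Gaussian estimate over the $K \cdot d_1$ independent Rademacher terms, producing the $B_h\sqrt{K d_1}$ contribution that, after division by $K/2$, yields the $\sqrt{d_1}$ inside the bracket.

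Fourth, collecting constants and multiplying by $2/K$ reproduces the stated inequality. The main obstacle is Step 2, getting the Maurer setup right so that the dual variable $u$ is properly absorbed into $\calF$ (rather than left as an outer sup, which is incompatible with fixed $h_i$'s) while still maintaining the uniform Lipschitz constant $L = B_\Phi(\eta)$; a secondary obstacle is tracking the $w$- and $b$-contributions separately in Step 3 so the final bound comes out in the additive form $B_X\sqrt{\E\norm{w_1}_2^2} + \sqrt{d_1}$.
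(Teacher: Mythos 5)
The statement you were assigned is Maurer's vector-contraction inequality itself (cited in the paper as \citet{maurer16vectorcontraction}, Corollary~4), and your proposal does not prove it: Step~2 of your outline explicitly ``applies Lemma~\ref{lem:mauer}'', i.e., it invokes the very inequality that is to be established, and the rest of the write-up is a proof sketch of the downstream result, Proposition~\ref{prop:rademacher_bound}, which in the paper is a separate proposition whose proof uses the lemma as an external black box. With respect to the assigned statement the argument is therefore circular (or, more charitably, aimed at the wrong target). A genuine proof of the lemma requires a contraction argument of Maurer's type: condition on all Rademacher variables but one, show a single-index inequality of the form $\E_{\varepsilon_i}\sup_{f\in\calF}\left[a(f) + \varepsilon_i h_i(f(x_i))\right] \leq \E\sup_{f\in\calF}\left[a(f) + \sqrt{2}L\sum_k \varepsilon_{ik} f_k(x_i)\right]$ by symmetrizing over pairs $f, f'$, using the $L$-Lipschitz hypothesis to bound $h_i(f(x_i)) - h_i(f'(x_i))$ by $L\norm{f(x_i)-f'(x_i)}_2$, and then converting the Euclidean norm of the increment into a Rademacher average over coordinates via a Khintchine-type lower bound $\E_\varepsilon\bigabs{\sum_k \varepsilon_k t_k} \geq \norm{t}_2/\sqrt{2}$ (this is where the $\sqrt{2}$ comes from), finishing by induction over $i$. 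None of this machinery appears in your proposal.

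As a secondary point, even read as a reconstruction of the paper's proof of Proposition~\ref{prop:rademacher_bound}, your Step~2 has an internal inconsistency: your scalar maps $h_i(y) = \phi\!\left(\ip{\alpha(\theta_i)/\norm{\alpha(\theta_i)}_2^2}{y}\right)\ip{u}{M_\eta(w_i)\alpha(\theta_i)}$ depend on the dual variable $u$, while your class elements $F_{x,u}(\theta_i) = (w_i^\T x + b_i)\alpha(\theta_i)$ do not carry $u$ in their output, so $u$ is neither fixed (as the lemma requires of the $h_i$) nor recoverable from $F_{x,u}(\theta_i)$; the construction also breaks when $\alpha(\theta_i) = 0$. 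The paper avoids this by augmenting the output: it takes $f_{x,\psi}(w,b) = (w^\T x + b, \psi)$ with $\psi \in \mathbb{S}^{d_1-1}$ riding along in the second block, and $h_i(v_1, v_2) = v_2^\T M_{\eta,i}\alpha_i\,\phi(v_1)$, which is $\sqrt{2}B_h B_\Phi(\eta)$-Lipschitz; the two blocks then separate after contraction into the $B_X\sqrt{\E\norm{w_1}_2^2}$ and $\sqrt{d_1}$ terms, much as in your Step~3.
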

\noindent We may now proceed with the proof.
\rad*
\begin{proof}
Put $\alpha_i = \alpha(\theta_i)$ and $M_{\eta,i} := M_\eta(w_i)$. We write, by definition of the $\norm{\cdot}_\infty$-norm and duality,
\begin{align*}
    \E \bignorm{ \sum_{i=1}^{K} \varepsilon_i \Phi_\eta(x; \theta_i) \alpha_i }_\infty &= \E \sup_{x \in X} \sup_{\psi \in \mathbb{S}^{d_1-1}} \sum_{i=1}^{K} \varepsilon_i \psi^\T M_{\eta,i} \alpha_i \phi(w_i^\T x + b_i).
\end{align*}
Towards applying Lemma~\ref{lem:mauer}, for a tuple $(x, \psi) \in X \times \mathbb{S}^{d_1-1}$,
define 
\begin{equation*}
    f_{x,\psi}(w, b) := \begin{pmatrix} w^\T x + b \\ \psi \end{pmatrix}.
\end{equation*}
Next, define $h_i : \R \times \mathbb{S}^{d_1-1} \rightarrow \R$ as 
\begin{equation*}
    h_i(v_1, v_2) := v_2^\T M_{\eta,i} \alpha_i \phi(v_1).
\end{equation*}
We need to show that $h_i$ is Lipschitz continuous. Let $v = (v_1, v_2)$, $w = (w_1, w_2)$, and observe that
\begin{align*}
    \abs{h_i(v_1, v_2) - h_i(w_1, w_2)} \leq \sqrt{2} B_h B_\Phi(\eta) \norm{v-w}_2,
\end{align*}
where we have applied the triangle inequality. Now, let $\{\xi_i\}_{i=1}^{K} \subseteq \{\pm 1\}$
and $\{\zeta_i\}_{i=1}^{K} \subseteq \{\pm 1\}^{d_1}$ be independent random vectors with i.i.d.\ Rademacher random variables as entries. Then, by Lemma~\ref{lem:mauer},
\begin{align*}
    \E \sup_{x \in X} \sup_{\psi \in \mathbb{S}^{d_1-1}} \sum_{i=1}^{K} \varepsilon_i  \psi^\T M_{\eta,i} \alpha_i \phi(w_i^\T x + b_i) &= \E \sup_{x \in X, \psi \in \mathbb{S}^{d_1-1}} \sum_{i=1}^{K} \varepsilon_i h_i(F_{x, \psi}(w_i, b_i)), \\
    &\leq 2 B_h B_\Phi(\eta) \E \sup_{x \in X, \psi \in \mathbb{S}^{d_1-1}} \sum_{i=1}^{K} \bigip{\begin{pmatrix}\xi_i\\\zeta_i\end{pmatrix}}{\begin{pmatrix} w_i^\T x + b_i \\ \psi \end{pmatrix}}, \\
    &= 2 B_h B_\Phi(\eta) \left[ \E \sup_{x \in X} \sum_{i=1}^{K} \xi_iw_i^\T x  + \E \sup_{\psi \in \mathbb{S}^{q-1}} \sum_{i=1}^{K} \zeta_i^\T \psi \right], \\
    &\leq 2 B_h B_\Phi(\eta) \left[ B_X \E \bignorm{ \sum_{i=1}^{K} \xi_iw_i}_2 + \E \bignorm{\sum_{k=1}^{K} \zeta_i}_2 \right], \\
    &\leq 2 \sqrt{K} B_h B_\Phi(\eta) \left[ B_X \sqrt{\E \norm{w_1}_2^2} + \sqrt{d_1} \right].
\end{align*}
This completes the proof.
\end{proof}

\section{Omitted proofs for Section~\ref{sec:p_results}}
\label{app:p_results}

\subsection{Details of Example~\ref{prop:s_delta_deadzone}}

First, for any $\delta > 0$, we define the function
\begin{align*}
    \bar{s}_\delta(x) := \frac{s_\delta(x)}{x}.
\end{align*}
\begin{mylemma}
\label{lem:sbar_delta_sqrt_lipschitz}
For any $\delta > 0$, the function $x \mapsto \bar{s}_\delta(\sqrt{x})$ is $\frac{1}{2\delta^2}$-Lipschitz on $\R_{\geq 0}$.
\end{mylemma}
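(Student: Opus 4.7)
The plan is to compute $g(x) := \bar{s}_\delta(\sqrt{x})$ explicitly, check that it is continuous and piecewise smooth, and then bound its derivative by $\tfrac{1}{2\delta^2}$; the Lipschitz claim then follows from the mean value theorem.

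First I would rewrite $\bar{s}_\delta$. For $x > 0$,
\begin{align*}
    \bar{s}_\delta(x) = \frac{(x-\delta)\ind\{x > \delta\}}{x} = \max\!\left\{0,\, 1 - \frac{\delta}{x}\right\},
\end{align*}
with $\bar{s}_\delta(0) := 0$. Substituting $\sqrt{x}$ in place of $x$ gives the piecewise definition
\begin{align*}
    g(x) = \begin{cases} 0 & \text{if } 0 \leq x \leq \delta^2, \\ 1 - \dfrac{\delta}{\sqrt{x}} & \text{if } x > \delta^2.\end{cases}
\end{align*}
At the junction $x = \delta^2$, both branches evaluate to $0$, so $g$ is continuous on $\R_{\geq 0}$.

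Next I would show $g$ is differentiable on $(\delta^2, \infty)$ with bounded derivative. A direct computation gives $g'(x) = \tfrac{\delta}{2}\, x^{-3/2}$ for $x > \delta^2$, which is positive and strictly decreasing in $x$. Its supremum over this interval is attained as $x \downarrow \delta^2$, yielding
\begin{align*}
    \sup_{x > \delta^2} g'(x) = \frac{\delta}{2}\cdot \delta^{-3} = \frac{1}{2\delta^2}.
\end{align*}
On $[0, \delta^2]$ the derivative is identically zero, and the one-sided limits of $g'$ at $\delta^2$ are $0$ from the left and $\tfrac{1}{2\delta^2}$ from the right, so $g$ is continuously differentiable except at the single point $x = \delta^2$.

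Finally, for any $0 \leq x \leq y$ the mean value theorem (applied separately on each smooth piece, together with the continuity of $g$ at $\delta^2$) gives
\begin{align*}
    |g(y) - g(x)| \leq \sup_{\xi \in (x,y)\setminus\{\delta^2\}} |g'(\xi)| \cdot |y - x| \leq \frac{1}{2\delta^2}|y-x|,
\end{align*}
which is the desired Lipschitz estimate. I do not anticipate any real obstacle: the argument is a routine piecewise calculus computation, and the only minor subtlety is verifying continuity at $\delta^2$ so that the MVE can be applied across the two pieces.
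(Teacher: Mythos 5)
Your proof is correct. You compute $g(x)=\bar{s}_\delta(\sqrt{x})$ explicitly as $0$ on $[0,\delta^2]$ and $1-\delta/\sqrt{x}$ on $(\delta^2,\infty)$, bound $g'(x)=\tfrac{\delta}{2}x^{-3/2}\leq \tfrac{1}{2\delta^2}$ on the right piece, and conclude via the mean value theorem applied on each smooth piece together with continuity at the junction; this is a valid and standard argument for a continuous, piecewise-$C^1$ function (the only cosmetic slip is saying the supremum of $g'$ is ``attained'' as $x\downarrow\delta^2$ --- it is a limit, not attained, but the bound $g'(x)\leq\tfrac{1}{2\delta^2}$ still holds). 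The paper takes a slightly different, derivative-free route: it splits into three cases according to whether $x$ and $y$ lie below $\delta^2$, above $\delta^2$, or straddle it, uses the $\tfrac{1}{2\delta^3}$-Lipschitz constant of $u\mapsto 1/\sqrt{u}$ on $[\delta^2,\infty)$ for the second case, and handles the straddling case with the concavity estimate $\sqrt{y}\leq \delta+\tfrac{1}{2\delta}(y-\delta^2)$. Your version buys a shorter, more unified calculus argument in which the kink at $\delta^2$ is absorbed into the piecewise MVT step, while the paper's case analysis avoids any appeal to differentiability of the composite map and makes the boundary case fully explicit; both yield exactly the constant $\tfrac{1}{2\delta^2}$.
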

\begin{proof}
Fix $x, y \in \R_{\geq 0}$.
Without loss of generality, suppose that $x \leq y$ (otherwise, we may flip the roles of $x$ and $y$).
If $y \leq \delta^2$, then
$\bar{s}_\delta(\sqrt{x}) = \bar{s}_\delta(\sqrt{y}) = 0$,
in which case the claim is trivial.

Now suppose that $x \geq \delta^2$.
On $[\delta, \infty)$,
we have that $\bar{s}_\delta$ coincides with
$x \mapsto 1-\delta/x$, and hence
\begin{align*}
    \bar{s}_\delta(\sqrt{x}) - \bar{s}_\delta(\sqrt{y}) &= 1 - \frac{\delta}{\sqrt{x}} - \left( 1 - \frac{\delta}{\sqrt{y}}\right) 
    = \delta \left( \frac{1}{\sqrt{y}} - \frac{1}{\sqrt{x}} \right).
\end{align*}
The function $x \mapsto 1/\sqrt{x}$ is $\frac{1}{2\delta^3}$-Lipschitz on $[\delta^2, \infty)$, and hence
\begin{align*}
    \abs{\bar{s}_\delta(\sqrt{x}) - \bar{s}_\delta(\sqrt{y})} \leq \frac{1}{2\delta^2} \abs{x - y}.
\end{align*}
Finally, we suppose that
$x \leq \delta^2 \leq y$.
By concavity of the square root on $\R_{\geq 0}$,
\begin{align*}
  \sqrt{y} \leq \delta + \frac{1}{2\delta}(y - \delta^2).
\end{align*}
Therefore,
\begin{align*}
    \abs{\bar{s}_\delta(\sqrt{x}) - \bar{s}_\delta(\sqrt{y})} = \abs{\bar{s}_\delta(\sqrt{y})} &= 1 - \frac{\delta}{\sqrt{y}} 
    = \frac{\sqrt{y}-\delta}{\sqrt{y}} 
    \leq \frac{\sqrt{y} - \delta}{\delta} 
    \leq \frac{y - \delta^2}{2\delta^2} 
    \leq \frac{y - x}{2\delta^2} 
    = \frac{\abs{y-x}}{2\delta^2}.
\end{align*}
\end{proof}

\sdeltadeadzone*
\begin{proof}
It is straightforward to check that $\frac{d}{dx} s^2_{\sqrt{\Delta}}(\sqrt{x}) = \bar{s}_{\sqrt{\Delta}}(\sqrt{x})$.
Conditions (i) and (ii) are immediately satisfied. To check condition (iii), observe that by Lemma~\ref{lem:sbar_delta_sqrt_lipschitz}, 
$\bar{s}_{\sqrt{\Delta}}(\sqrt{x})$ is $1/(2\Delta)$-Lipschitz.
Finally, $\bar{s}_{\sqrt{\Delta}}(\sqrt{x}) \leq 1$ for all $x \geq 0$.
\end{proof}

\subsection{Details of Example~\ref{prop:s_delta_gamma_deadzone}}

\sdeltagammadeadzone*
\begin{proof}
It is easy to check that the derivative $s'_{\Delta,\gamma}$ exists, is continuous,
and is given by:
\begin{align*}
    s'_{\Delta,\gamma}(x) = \begin{cases}
    0 &\text{if } x \leq \Delta, \\
    \frac{x-\Delta}{2\gamma} &\text{if } x \in (\Delta, \Delta+2\gamma), \\
    1 &\text{if } x \geq \Delta+2\gamma.
    \end{cases}
\end{align*}
It is also easy to check that this derivative is $\frac{1}{2\gamma}$-Lipschitz
and bounded by $1$.
\end{proof}

\subsection{Proof of Theorem~\ref{thm:ac_finite_approx}}

\begin{myprop}
\label{prop:ode_rhs_lipschitz}
Fix any $\Delta > 0$.
Let $\sigma_\Delta$ be $\Delta$-admissible.
Define
$F : \R^n \times \R^s \times O_p \times O_m \times \R_{\geq 0} \rightarrow \R^n \times \R^s \times \R^p \times \R^m$
as:
\begin{align*}
    F(x, e, \hat{\alpha}_p, \hat{\alpha}_m, t) &:= \begin{bmatrix}
    f(x, t) + g(x, t)(Y(x, t) \tilde{\alpha}_p + \Psi(x) \hat{\alpha}_m - h(x)), \\
    f_e(e, t) + g_e(x, t)(Y(x, t) \tilde{\alpha}_p + \Psi(x) \hat{\alpha}_m - h(x)), \\
    -\sigma'_\Delta(Q(e, t)) [\nabla^2 \psi_p(\hat{\alpha}_p)]^{-1} Y(x, t)^\T g_e(e, t)^\T \nabla Q(e, t), \\
    -\sigma'_\Delta(Q(e, t)) [\nabla^2 \psi_m(\hat{\alpha}_m)]^{-1} \Psi(x)^\T g_e(e, t)^\T \nabla Q(e, t).
    \end{bmatrix}.
\end{align*}
The function $F(x, e, \hat{\alpha}_p, \hat{\alpha}_m, t)$ is locally
Lipschitz in $(x, e, \hat{\alpha}_p, \hat{\alpha}_m)$.
\end{myprop}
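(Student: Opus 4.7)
The strategy is to decompose $F$ into its four block components and observe that each is built by finitely many applications of the three operations (sum, product, composition) covered by Proposition on local composition, applied to a small catalogue of "atomic" maps whose local Lipschitz and local boundedness properties are either hypotheses of the theorem or immediate consequences thereof. Concretely, I will treat the combined state $z := (x,e,\hat{\alpha}_p,\hat{\alpha}_m)$ as a single variable in the product Banach space (with the max-of-norms norm), so that all coordinate projections $z \mapsto x$, $z \mapsto e$, $z \mapsto \hat{\alpha}_p$, $z \mapsto \hat{\alpha}_m$ are globally $1$-Lipschitz; any map of one coordinate then lifts trivially to a map on $z$ with the same regularity.

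The atomic maps I will need are: (a) $f(x,t)$, $g(x,t)$, $Y(x,t)$, $h(x)$, locally Lipschitz in $x$ and locally bounded uniformly in $t$ by Assumption~\ref{assmp:basic}; (b) $\Psi(x)$, locally Lipschitz by hypothesis; (c) $f_e(e,t)$, $g_e(x,t)$, locally Lipschitz in their first arguments and locally bounded uniformly in $t$ by Assumption~\ref{assmp:error}; (d) $\nabla Q(e,t)$, locally Lipschitz in $e$ and locally bounded uniformly in $t$ by Assumption~\ref{assmp:lyapunov}; (e) $\sigma'_\Delta(\cdot)$, locally Lipschitz by Definition~\ref{def:dead}(iii); (f) the two Hessian-inverse maps $\hat{\alpha}_\ell \mapsto [\nabla^2\psi_\ell(\hat{\alpha}_\ell)]^{-1}$ for $\ell\in\{p,m\}$. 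Once these atoms are in hand, the first two components of $F$ are sums and products of items (a)--(c), while the last two components additionally compose (d)--(f) with $Q(e,t)$ and multiply everything together; Proposition on local composition then yields local Lipschitzness and local boundedness of each block, and hence of $F$.

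The two non-routine steps are the following. First, I need that $Q(e,t)$ itself (not merely $\nabla Q$) is locally Lipschitz in $e$ uniformly in $t$, so that the composition $\sigma'_\Delta\circ Q$ is locally Lipschitz. This follows by writing $Q(e_1,t)-Q(e_2,t)=\int_0^1 \nabla Q(e_2+s(e_1-e_2),t)^\T(e_1-e_2)\,ds$ and bounding by the local $L^\infty$ bound on $\nabla Q$ granted by Assumption~\ref{assmp:lyapunov}. Second, I need that $\hat{\alpha}_\ell\mapsto[\nabla^2\psi_\ell(\hat{\alpha}_\ell)]^{-1}$ is locally Lipschitz on $O_\ell$. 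Since $\psi_\ell$ is strongly convex with respect to some norm, on any compact $K\subset O_\ell$ the operator $\nabla^2\psi_\ell$ is uniformly bounded below and above; equivalently, there is a uniform lower bound $\sigma_{\min}(\nabla^2\psi_\ell)\geq c_K>0$ on $K$. Combined with the assumed locally Lipschitz Hessian of $\psi_\ell$ and the identity $A^{-1}-B^{-1}=A^{-1}(B-A)B^{-1}$, this gives local Lipschitzness of the inverse-Hessian map, as well as its local boundedness.

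The main obstacle is really only the bookkeeping of step (f): matching the norm on the parameter space (in which strong convexity is stated) to the Euclidean norm used in the operator-inverse estimate, and verifying that "locally Lipschitz Hessian" in the sense of the paper is strong enough to conclude Lipschitzness of $(\nabla^2\psi_\ell)^{-1}$ on compacta. Everything else is a mechanical application of the composition lemma: once each constituent is known to be locally Lipschitz and locally bounded, repeated invocation of Proposition on local composition (sum, product, composition) delivers the claim for all four block components of $F$, completing the proof.
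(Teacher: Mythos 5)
Your proposal matches the paper's proof essentially step for step: reduce to the composition/product/sum rules of the local-Lipschitz lemma, and handle the two non-routine pieces exactly as the paper does --- $\sigma'_\Delta \circ Q$ via the local bound on $\nabla Q$ (which gives Lipschitzness of $Q$; note its local boundedness, needed for the composition rule, is immediate from $Q(e,t) \leq \mu_2(\norm{e}_2)$), and the inverse-Hessian maps via the strong-convexity lower bound $\nabla^2\psi_\ell \succcurlyeq c I$ together with the identity $A^{-1}-B^{-1}=A^{-1}(B-A)B^{-1}$ and the locally Lipschitz Hessian. The only cosmetic difference is that you localize the Hessian lower bound to compacta, whereas the paper takes it globally on $O_\ell$ by norm equivalence; this changes nothing.
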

\begin{proof}
The functions
$f, f_e, g, g_e, h, Y, \Psi, \nabla Q, B_h$, and $\sigma'_\Delta$ are all
locally Lipschitz and locally bounded
by assumption.
As long as we can check that both
$\zeta_1(e, t) := \sigma'_\Delta(Q(e, t))$ and
$\zeta_{2,\ell}(\hat{\alpha}) := [\nabla^2 \psi_\ell(\hat{\alpha})]^{-1}$
for $\ell \in \{p, m\}$
are locally Lipschitz and locally bounded, then
the result follows via repeated applications
of the sum and product composition rules~(Proposition~\ref{prop:local_composition}).

We now verify that $\zeta_1$ is locally Lipschitz
and locally bounded.
Since $\nabla Q(e, t)$ is locally bounded, this means that
$Q(e, t)$ is locally Lipschitz.
Furthermore, since $0 \leq Q(e, t) \leq \mu_2(\norm{e}_2)$,
it is clear that $Q(e, t)$ is locally bounded.
Next, $\sigma'_\Delta$ is locally Lipschitz by admissibility.
Since $\sigma'_\Delta$ does not depend on time, then it is
also locally bounded.
This shows that $\zeta_1$ is locally Lipschitz and bounded,
since it is the composition of two locally Lipschitz and bounded functions.

For $\zeta_{2,\ell}$, we first observe that,
since $\psi_\ell$ is strongly convex with respect to a norm $\norm{\cdot}$ on $O_\ell$, there exists a $c > 0$ such that $\nabla^2 \psi_\ell(\hat{\alpha}) \succcurlyeq c I$ for all $\hat{\alpha} \in O_\ell$.
Next, for any invertible square matrices $A, B$, we have the algebraic identity
$A^{-1} - B^{-1} = A^{-1} (B-A) B^{-1}$.
Therefore, for any two $\hat{\alpha}_1, \hat{\alpha}_2$,
\begin{align*}
    \opnorm{ [\nabla^2 \psi_\ell(\hat{\alpha}_1)]^{-1} - [\nabla^2 \psi_\ell(\hat{\alpha}_2)]^{-1} } \leq c^{-2} \opnorm{ \nabla^2 \psi(\hat{\alpha}_1) - \nabla^2 \psi(\hat{\alpha}_2) }.
\end{align*}
Because the potential $\psi_\ell$ has locally Lipschitz Hessians, this shows that $\zeta_{2,\ell}$ is locally Lipschitz. Since $\zeta_{2,\ell}$ does not depend on time, it is also
locally bounded.
\end{proof}

\begin{myprop}
\label{prop:bregman_lower_bound}
Let $O \subseteq \R^\ell$ be an open convex set, and 
let $\psi : O \rightarrow \R$ be a strongly convex potential with respect to some norm $\norm{\cdot}$ on $O$.
Then we have that
\begin{align*}
    \inf_{\substack{\alpha,\hat{\alpha} \in O, \\\alpha \neq \hat{\alpha}}} \frac{\bregd{\alpha}{\hat{\alpha}}}{\norm{\alpha - \hat{\alpha}}_2^2} > 0.
\end{align*}
\end{myprop}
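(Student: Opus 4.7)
\textbf{Proof plan for Proposition~\ref{prop:bregman_lower_bound}.}
The plan is to combine two standard facts: (i) the defining inequality of strong convexity for the potential $\psi$ with respect to $\norm{\cdot}$, and (ii) the equivalence of norms on the finite-dimensional space $\R^\ell$. Neither step has any subtlety; the whole content of the statement is essentially that ``strongly convex in some norm'' passes through to ``quadratic lower bound in $\norm{\cdot}_2$'' with a uniform constant that does not depend on the particular pair $(\alpha, \hat{\alpha}) \in O \times O$.

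First, I would invoke strong convexity of $\psi$: by hypothesis there exists a constant $c > 0$ such that for all $\alpha, \hat{\alpha} \in O$,
\begin{align*}
    \bregd{\alpha}{\hat{\alpha}} \;\geq\; \tfrac{c}{2} \,\norm{\alpha - \hat{\alpha}}^2.
\end{align*}
This is the standard equivalent characterization of $c$-strong convexity in terms of the Bregman divergence (it follows by a second-order Taylor expansion of $\psi$ and the fact that $\nabla^2 \psi \succcurlyeq c I$ with respect to the chosen norm, or directly from the definition of strong convexity).

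Second, since $\R^\ell$ is finite-dimensional, all norms on $\R^\ell$ are equivalent, so there exists a constant $\kappa > 0$ such that $\norm{v} \geq \kappa \, \norm{v}_2$ for every $v \in \R^\ell$; in particular this applies to $v = \alpha - \hat{\alpha}$. Combining the two inequalities yields
\begin{align*}
    \bregd{\alpha}{\hat{\alpha}} \;\geq\; \tfrac{c\kappa^2}{2} \,\norm{\alpha - \hat{\alpha}}_2^2 \qquad \forall\, \alpha, \hat{\alpha} \in O,
\end{align*}
so that the infimum in the statement is bounded below by $c\kappa^2/2 > 0$, completing the proof.

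There is essentially no hard step. The only thing to be mildly careful about is that strong convexity is assumed with respect to a general norm $\norm{\cdot}$ on $O$ rather than $\norm{\cdot}_2$, which is exactly why the norm-equivalence step is needed; in infinite dimensions this proposition would fail in general, but here finite-dimensionality makes it immediate.
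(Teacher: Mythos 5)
Your proof is correct and follows essentially the same route as the paper: strong convexity gives a quadratic lower bound on the Bregman divergence, and equivalence of norms on $\R^\ell$ converts it to the $\ell_2$ norm with a uniform constant. The only cosmetic difference is that you invoke the first-order (Bregman) characterization of strong convexity in the given norm and then switch norms, whereas the paper first transfers the Hessian bound $\nabla^2\psi \succcurlyeq cI$ to the Euclidean norm and applies Taylor's theorem with integral remainder; both are valid here since the paper assumes twice-differentiable potentials.
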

\begin{proof}
Because all norms on $\R^\ell$ are equivalent, strong convexity
on $O$ there exists a $c > 0$ such that
$\nabla^2 \psi(a) \succcurlyeq c I$ for all $a \in O$.
By Taylor's theorem, for any arbitrary $\alpha, \hat{\alpha}$,
\begin{align*}
    \bregd{\alpha}{\hat{\alpha}} &= \frac{1}{2} (\alpha-\hat{\alpha})^\T \left[ \int_0^1 \nabla^2 \psi((1-t)\hat{\alpha}+\alpha t) dt \right] (\alpha - \hat{\alpha}) 
    \geq \frac{c}{2} \norm{\alpha-\hat{\alpha}}^2_2.
\end{align*}
The claim now follows.
\end{proof}

\acfiniteapprox*
\begin{proof}
By Proposition~\ref{prop:ode_rhs_lipschitz},
the right-hand side of the dynamical system on $(x, e, \hat{\alpha}_p, \hat{\alpha}_m)$ is locally Lipschitz, and
therefore there exists a maximal time $T_{\max} > 0$
such that there exists a unique $C^1$ curve
$t \mapsto (x(t), e(t), \hat{\alpha}_p(t), \hat{\alpha}_m(t))$ 
that satisfies the dynamics on $[0, T_{\max})$.

We now define the candidate Lyapunov function $V : [0, T_{\max}) \rightarrow \R_{\geq 0}$ as
\begin{align*}
    V(t) = \sigma_\Delta(Q(e(t), t)) + \bregdp{\alpha_p}{\hat{\alpha}_p} + \bregdm{\alpha_m}{\hat{\alpha}_m}.
\end{align*}
where $\alpha_m$ is the minimizing $\alpha_m$ in the definition
$B_{\text{approx}}$\footnote{Such a minimizing
$\alpha_m$ exists since the function $\alpha_m \mapsto \sup_{\norm{x}_2 \leq B_x} \norm{\Psi(x) \alpha_m - h(x)}_2$
is continuous and the set
$\{ \bregdm{\alpha_m}{\alpha_{m,0}} \leq B_{\alpha_m} \}$ is closed.
}. 
Taking the time derivative of $V$ and suppressing dependence on time,
\begin{align*}
    \frac{d}{dt} V(t) &= \sigma'_\Delta(Q)\left(\ip{\nabla Q }{ f_e + g_e (Y\tilde{\alpha}_p + \Psi \alpha_m - h) } + \frac{\partial Q}{\partial t} \right) + \bigip{\frac{d}{dt} \nabla \psi_p(\hat{\alpha}_p)}{\tilde{\alpha}_p} + \bigip{\frac{d}{dt} \nabla \psi_m(\hat{\alpha}_m)}{\tilde{\alpha}_m} \\
    &\leq \sigma'_\Delta(Q)\left( - \rho(\norm{e}_2) + \ip{\nabla Q}{ g_e(Y\tilde{\alpha}_p + \Psi \alpha_m - h) }\right) + \bigip{\frac{d}{dt} \nabla \psi_p(\hat{\alpha}_p)}{\tilde{\alpha}_p} + \bigip{\frac{d}{dt} \nabla \psi_m(\hat{\alpha}_m)}{\tilde{\alpha}_m} \\
    &= -\sigma'_\Delta(Q)\rho(\norm{e}_2) + \sigma'_\Delta(Q)\bigip{g_e^\T \nabla Q}{  \Psi\hat{\alpha}_m - h} - \sigma'_\Delta(Q) \bigip{ g_e^\T \nabla Q}{\Psi\tilde{\alpha}_m} \\
    &= -\sigma'_\Delta(Q)\rho(\norm{e}_2) + \sigma'_\Delta(Q) \bigip{g_e^\T \nabla Q}{\Psi \alpha_m - h} \\
    &\leq -\sigma'_\Delta(Q)\rho(\norm{e}_2) + \sigma'_\Delta(Q) \norm{g_e^\T \nabla Q}_2 \norm{\Psi \alpha_m - h}_2.
\end{align*}
Because $\sigma_\Delta$ is a $\Delta$-admissible deadzone, 
$\sigma'_\Delta > 0$ only when $Q > \Delta$.
But since $Q(e, t) \leq \mu_2(\norm{e}_2)$, 
we have $\norm{e}_2 > \mu_2^{-1}(\Delta)$.
Therefore,
\begin{align}
    \frac{d}{dt} V(t) &\leq- \sigma'_\Delta(Q)\rho(\mu_2^{-1}(\Delta)) +  \sigma'_\Delta(Q) \norm{g_e^\T \nabla Q}_2 \norm{\Psi \alpha_m - h}_2. \label{eq:d_dt_V_one}
\end{align}
Let
$T_0$ be defined as
\begin{align*}
    T_0 := \sup\{ T \in [0, T_{\max}) \mid \norm{e(t)}_2 \leq R \:\: \forall t \in [0, T]\}.
\end{align*}
Note that since 
\begin{align*}
    \norm{e(0)}_2 \leq \mu_1^{-1}(V(0)) \leq \mu_1^{-1}\left( Q(e(0), 0) + B_{\alpha_p} + B_{\alpha_m} \right) < R,
\end{align*}
$T_0$ is well-defined.
Now, by means of contradiction,
suppose $T_0 < T_{\max}$.
For every $t \in [0, T_0]$, by \eqref{eq:small_error_implies_small_state}, we have that
$\norm{x(t) - x_d(t)}_2 \leq C_e R$,
and hence $\norm{x(t)}_2 \leq C_e R + B_d = B_x$.
Hence, by the definition of $B_{g_e}$ and $B_{\nabla Q}$,
from \eqref{eq:d_dt_V_one}
and the requirement that
$\Delta \geq \mu_2(\rho^{-1}(2 B_{g_e} B_{\nabla Q} B_{\mathrm{approx}}))$, for every $t \in [0, T_0]$,
\begin{align*}
    \frac{d}{dt} V(t) &\leq - \sigma'_\Delta(Q)\rho(\mu_2^{-1}(\Delta)) + \sigma'_\Delta(Q) B_{g_e} B_{\nabla Q} B_{\mathrm{approx}} \\
    &\leq - \sigma'_\Delta(Q)\rho(\mu_2^{-1}(\Delta))/2.
\end{align*}

Hence, $V(t) \leq V(0)$ for all $t \in [0, T_0]$.
On the other hand, since $T_0$ is maximal,
we must have that $\norm{e(T_0)}_2 = R$, 
otherwise, if $\norm{e(T_0)}_2 < R$, 
by continuity of the solution $e(t)$
on $[0, T_{\max})$,
there would exist a $\delta > 0$
such that for all $t \in [0, T_0+\delta]$,
we have $\norm{e(t)}_2 \leq R$.
This means then that,
\begin{align*}
    V(0) \geq V(T_0) \geq \mu_1(\norm{e(T)}_2) = \mu_1(R) > \mu_1(\mu_1^{-1}(V(0))) = V(0),
\end{align*}
a contradiction. Hence $T_0 = T_{\max}$.

Now we argue that $T_{\max}$ cannot be finite.
Suppose towards a contradiction that $T_{\max}$ is finite.
We already have 
$\max_{t \in [0, T_{\max})} \norm{e(t)}_2 \leq R$.
This implies that $\norm{x(t)}_2 \leq C_e R + B_d = B_x$
for $t \in [0, T_{\max})$.
Finally, 
since $V(t) \leq V(0)$ on all $t \in [0, T_{\max})$,
this shows that both $\norm{\hat{\alpha}_p(t)}_2$
and $\norm{\hat{\alpha}_m(t)}_2$ are uniformly bounded
for all $t \in [0, T_{\max})$ via Proposition~\ref{prop:bregman_lower_bound}.
This contradicts the maximality of $T_{\max}$, showing
that $T_{\max} = \infty$.

To continue the proof, 
we integrate the inequality
$\frac{d}{dt} V(t) \leq - \sigma'_\Delta(Q) \rho(\mu_2^{-1}(\Delta))/2$ to conclude that
\begin{align*}
    \int_0^\infty \sigma'_\Delta(Q(e(t), t)) dt \leq \frac{2 V(0)}{\rho(\mu_2^{-1}(\Delta))}.
\end{align*}
We now argue that the integrand
$t \mapsto \sigma'_\Delta(Q(e(t), t))$ is uniformly continuous.

To do this, we 
will argue that (a) $t \mapsto Q(e(t), t)$ is uniformly bounded,
(b) $t \mapsto e(t)$ is uniformly Lipschitz,
and (c) $t \mapsto Q(e(t), t)$ is uniformly Lipschitz.
To see (a), we note that $Q(e(t), t) \leq V(t) \leq V(0)$.
To see (b), we note that:
\begin{align*}
    \norm{\dot{e}(t)}_2 \leq \norm{f_e(e(t), t)}_2 + \opnorm{g_e(x(t), t)} (\opnorm{Y(x(t), t)}\norm{\tilde{\alpha}_p(t)}_2 + \opnorm{\Psi(x(t))}\norm{\hat{\alpha}_m(t)}_2 + \norm{h(x(t))}_2).
\end{align*}
Since
$f_e$, $g_e$, $Y$, $\Psi$, and $h$ are locally bounded 
in the first argument uniformly in $t$,
and since
$\norm{\hat{\alpha}_p(t)}_2$ and
$\norm{\hat{\alpha}_m(t)}_2$ are uniformly bounded,
this shows that $\norm{\dot{e}(t)}_2$ is uniformly bounded, 
and hence $t \mapsto e(t)$ is uniformly Lipschitz.
To see (c), we observe that:
\begin{align*}
    &\abs{Q(e(s), s) - Q(e(t), t)} \\
    &\qquad\qquad\leq \abs{Q(e(s), s) - Q(e(s), t)} + \abs{Q(e(s), t) - Q(e(t), t)} \\
    &\qquad\qquad\leq \left[ \sup_{t \geq 0} \sup_{\norm{e}_2 \leq R} \bigabs{\frac{\partial Q}{\partial t}(e, t)}\right] \abs{s-t} + \left[ \sup_{t \geq 0} \sup_{\norm{e}_2 \leq R} \norm{\nabla Q(e, t)}_2 \right]\norm{e(s) - e(t)}_2.
\end{align*}
Since $\frac{\partial Q}{\partial t}$
and $\nabla Q$ are both locally bounded in $e$
uniformly in $t$, and since $t \mapsto e(t)$ is uniformly Lipschitz, 
we see that $t \mapsto Q(e(t), t)$ is also uniformly Lipschitz.

We now argue that $t \mapsto \sigma'_\Delta(Q(e(t), t))$ is uniformly continuous.
Since $\sigma'_\Delta$ is locally Lipschitz, 
it is uniformly Lipschitz on $[0, V(0)]$.
Therefore, $t \mapsto \sigma'_\Delta(Q(e(t), t))$ is the composition of two Lipschitz functions, and is hence Lipschitz
(and therefore uniformly continuous).

From this, we apply Barbalat's lemma to conclude that:
\begin{align*}
    \lim_{t \rightarrow \infty} \sigma'_\Delta(Q(e(t), t)) = 0.
\end{align*}
Since $\sigma_\Delta$ is a $\Delta$-admissible deadzone, this implies that
\begin{align*}
    \limsup_{t \rightarrow \infty} \norm{e(t)}_2 \leq \mu_1^{-1}(\Delta).
\end{align*}
\end{proof}

\subsection{Proof of Theorem~\ref{thm:dp_finite_approx}}

\newcommand{\fnom}{\bar{f}}

\dpfiniteapprox*
\begin{proof}
We proceed by reduction to Theorem~\ref{thm:ac_finite_approx}. Observe that we may write the predictor \eqref{eqn:dyn_predict} in the matched uncertainty form \eqref{eqn:gen_dyn} with $g(\hat{x}, t) = I$
\begin{equation*}
    \dot{\hat{x}} = f(\hat{x}, t) + k(\hat{x}, x(t)) + \left(\hat{f}(\hat{x}, \hat{\alpha}, t) - f(\hat{x}, t)\right).
\end{equation*}
This is an adaptive control problem with input $\hat{f}(\hat{x}, \hat{\alpha}, t)$ and desired trajectory $x(t)$. The ``nominal dynamics'' $\fnom(\hat{x}, t) := f(\hat{x}, t) + k(\hat{x}, x(t))$ is contracting 
at rate $\lambda$ in the metric $M$ by assumption, meaning that
\begin{align*}
    \frac{\partial \fnom}{\partial \hat{x}}(\hat{x}, t)^\T M(\hat{x}, t) + M(\hat{x}, t) \frac{\partial \fnom}{\partial \hat{x}}(\hat{x}, t) + \dot{M}(\hat{x}, t) \preccurlyeq - 2 \lambda M(\hat{x}, t) \:\:\forall \hat{x} \in \R^n, t \in \R_{\geq 0}.
\end{align*}
Let the error signal $e(t) := \hat{x}(t) - x(t)$.
The error dynamics are
\begin{align*}
    \dot{e} &= f(\hat{x}, t) - f(x(t), t) + k(\hat{x}, x(t)) + \left( \hat{f}(\hat{x}, \hat{\alpha}, t) - f(\hat{x}, t)\right) \\
    &= \fnom(\hat{x}, t) - f(x(t), t) + \left( \hat{f}(\hat{x}, \hat{\alpha}, t) - f(\hat{x}, t)\right).
\end{align*}
Hence we can define
\begin{align*}
    f_e(e, t) := \fnom(e + x(t), t) - f(x(t), t), \:\: g_e(x, t) := I. 
\end{align*}
We first check that $f_e(e, t)$ is locally Lipschitz and locally bounded uniformly in $t$ via Proposition~\ref{prop:local_composition}.
First we consider $(e, t) \mapsto f(e + x(t), t)$.
Write this map as the composition $f(\phi(e, t), t)$ with
$\phi(e, t) := e + x(t)$. Since the signal $x(t)$ is uniformly bounded,
it is clear that $\phi$ is both locally Lipschitz and locally bounded
uniformly in $t$. Since the outer function $f(x, t)$ is also locally
Lipschitz and locally bounded uniformly in $t$, the composition remains locally Lipschitz and locally bounded uniformly in $t$.
Next, since $k(\hat{x}, x)$ is locally Lipschitz in $\hat{x}$
and continuous,
and since $x(t)$ is uniformly bounded, the function
$(\hat{x}, t) \mapsto k(\hat{x}, x(t))$ is locally Lipschitz
and locally bounded uniformly in $t$.
By an identical composition argument, so is $(e, t) \mapsto k(e + x(t), x(t))$. Finally, the function $(e, t) \mapsto f(x(t), t)$ is trivially locally Lipschitz and locally bounded uniformly in $t$
since $x(t)$ is bounded. Therefore, $f_e$ is locally Lipschitz and locally bounded uniformly in $t$.

The Jacobian $\frac{\partial f_e}{\partial e}(e, t) = \frac{\partial \fnom}{\partial \hat{x}}(e + x(t), t)$,
which shows that $f_e(e, t)$ is contracting at rate $\lambda$ in the
metric $M_e(e, t) := M(e + x(t), t)$.
Furthermore, it is easy to check that
$e = 0$ is a particular solution to $\dot{e} = f_e(e, t)$,
as $k(x, x) = 0$ for all $x$.
Therefore, $f_e$ admits an exponentially stable Lyapunov function $Q(e, t) = E_{M_e(\cdot, t)}(e, 0)$ that satisfies:
\begin{align*}
    \ip{\nabla Q(e, t)}{f_e(e, t)} + \frac{\partial Q}{\partial t}(e, t) \leq - 2 \lambda Q(e, t) \:\: \forall e \in \R^n, t \in \R_{\geq 0}.
\end{align*}
Moreover, because $\mu I \preceq M_e(e, t) \preceq L I$,
\begin{equation*}
    \mu\norm{e}_2^2 \leq Q(e, t) \leq L\norm{e}_2^2 \:\: \forall e \in \R^n, t \in \R_{\geq 0}.
\end{equation*}
Now, observe that
\begin{equation*}
    \nabla Q(e, t) = M_e(e, t)\gamma_s(0; e + x(t), x(t), t),
\end{equation*}
so that $B_{\nabla Q} = \sup_{t\geq 0}\sup_{\norm{e}_2 \leq R}\norm{\nabla Q(e, t)}_2 \leq L B_\gamma$. 
Furthermore, by the boundedness of $M$ and
the assumption that
$(\hat{x}, t) \mapsto \norm{\gamma_s(0; \hat{x}, x(t), t)}_2$ is
locally bounded in $\hat{x}$ uniformly in $t$,
we have that $\nabla Q(e, t)$ is locally bounded in $e$ uniformly in $t$.
Similarly, since
\begin{align*}
    \frac{\partial Q}{\partial t}(e, t) = - \gamma_s(1; e + x(t), x(t), t)^\T M_e(e, t) f_e(e, t),
\end{align*}
by the boundedness of $M$, the assumption that
$(\hat{x}, t) \mapsto \norm{\gamma_s(1; \hat{x}, x(t), t)}_2$ 
is locally bounded
in $\hat{x}$ uniformly in $t$ (since geodesics have constant speed,
we have $\norm{\gamma_s(1; \hat{x}, x(t), t)}_2 = \norm{\gamma_s(0; \hat{x}, x(t), t)}_2$),
we have that $\frac{\partial Q}{\partial t}(e, t)$ is locally bounded in $\hat{x}$ uniformly in $t$. Hence, we can invoke Theorem~\ref{thm:ac_finite_approx}
with
\begin{align*}
    C_e = 1, \:\:
    B_d = B_x, \:\:
    B_x = B_{\hat{x}}, \:\:
    B_{g_e} = 1, \:\:
    B_{\nabla Q} = L B_\gamma, \\
    \rho\left(\norm{e}_2\right) = 2\lambda \mu \norm{e}_2^2, \:\: \mu_1\left(\norm{e}_2\right) = \mu\norm{e}_2^2, \:\:
    \mu_2\left(\norm{e}_2\right) = L\norm{e}_2^2.   
\end{align*}
The result now follows.
\end{proof}

\subsection{Proof of Theorem~\ref{thm:ac_approx_interp}}

\acapproxinterp*
\begin{proof}
From the proof of Theorem~\ref{thm:ac_finite_approx},
the solution $t \mapsto (x(t), e(t), \hat{\alpha}_p(t), \hat{\alpha}_m(t))$
exists for $t \geq 0$, is unique, and is continuously differentiable. Furthermore,
by Proposition~\ref{prop:bregman_lower_bound} we have the following
uniform estimates
\begin{align*}
    \sup_{t \geq 0} \norm{e(t)}_2 \leq R, \:\: \sup_{t \geq 0} \norm{x(t)}_2 \leq B_x, \:\: \sup_{t \geq 0} \norm{\hat{\alpha}_\ell(t)}_2 \leq \sqrt{\frac{B_{\alpha_\ell}}{c_\ell}} + \norm{\alpha_{\ell,0}}_2 +  \sqrt{\frac{V(0)}{c_\ell}}, \:\: \ell \in \{p, m\}.
\end{align*}
Here, $c_p$ (resp. $c_m$) is a constant depending only on the ambient dimension $p$ and $\psi_p$ (resp. $m$ and $\psi_m$).

Now, applying that
$f, g, f_e, g_e, Y, \Psi$, and $h$ are all locally bounded in their first arguments uniformly in $t$, 
that $\nabla^2 \psi_\ell$ is uniformly 
bounded from below for $\ell \in \{p, m\}$,
and the assumption that $\sigma'_\Delta$ is $B$-bounded, we conclude that 
$\dot{x}(t)$, $\dot{e}(t)$, $\dot{\alpha}_p(t)$,
and $\dot{\alpha}_m(t)$ are all uniformly bounded. Hence
$x(t)$, $e(t)$, $\alpha_p(t)$, and $\alpha_m(t)$ are uniformly Lipschitz
with Lipschitz constants that do not depend on $\Delta$ and $L$.

Next, the fact that $f_e$, $g_e$, $Y$, $\Psi$, and $h$ are locally Lipschitz and locally bounded in their first arguments
uniformly in $t$
implies that $\dot{e}(t)$ is uniformly Lipschitz, 
with a Lipschitz constant that depends affinely on $L$.
Therefore, by Proposition~\ref{prop:generalized_barbalat},
we have that:
\begin{align*}
    \limsup_{t \rightarrow \infty} \norm{\dot{e}(t)}_2 \leq C_1 \sqrt{\mu_1^{-1}(\Delta)(1+L)},
\end{align*}
for a constant $C_1$ that does not depend on $\Delta$ and $L$.
Now for any $t$,
\begin{align*}
    \norm{g_e(x(t), t)(u(x(t), t) - Y(x(t), t) \alpha_p - h(x(t)))}_2 &\leq \norm{\dot{e}(t)}_2 + \norm{f_e(e(t), t)}_2 \\
    &\leq \norm{\dot{e}(t)}_2 + C_2 \norm{e(t)}_2,
\end{align*}
where $C_2$ does not depend on $\Delta$ and $L$. Taking the $\limsup$ on both sides yields the claim.
\end{proof}


\end{document}